\definecolor{darkgreen}{rgb}{0,0.45,0}
\def\@cite#1#2{[{#1\if@tempswa ,~#2\fi}]}
\DeclareMathAlphabet{\mathbf}{OT1}{cmr}{b}{n}
\def\matrixobject@{%
  \edef \next@{={\DirectionfromtheDirection@ }}%
  \expandafter \toks@ \next@ \plainxy@
  \let\xy@@ix@=\xyq@@toksix@
  \xyFN@ \OBJECT@}
\let\xy@entry@@norm=\entry@@norm
\def\entry@@norm@patched{%
  \let\object@=\matrixobject@
  \xy@entry@@norm }
\newcommand{\twocong}[2][0.5]{\ar@{}[#2] \save ?(#1)*{\cong}\restore}
\newcommand{\twoeq}[2][0.5]{\ar@{}[#2] \save ?(#1)*{=}\restore}
\newcommand{\rtwocell}[3][0.5]{\ar@{}[#2] \ar@{=>}?(#1)+/l 0.2cm/;?(#1)+/r 0.2cm/^{#3}}
\newcommand{\ltwocell}[3][0.5]{\ar@{}[#2] \ar@{=>}?(#1)+/r 0.2cm/;?(#1)+/l 0.2cm/^{#3}}
\newcommand{\ltwocello}[3][0.5]{\ar@{}[#2] \ar@{=>}?(#1)+/r 0.2cm/;?(#1)+/l 0.2cm/_{#3}}
\newcommand{\dtwocell}[3][0.5]{\ar@{}[#2] \ar@{=>}?(#1)+/u  0.2cm/;?(#1)+/d 0.2cm/^{#3}}
\newcommand{\dltwocell}[3][0.5]{\ar@{}[#2] \ar@{=>}?(#1)+/ur  0.2cm/;?(#1)+/dl 0.2cm/^{#3}}
\newcommand{\drtwocell}[3][0.5]{\ar@{}[#2] \ar@{=>}?(#1)+/ul  0.2cm/;?(#1)+/dr 0.2cm/^{#3}}
\newcommand{\dthreecell}[3][0.5]{\ar@{}[#2] \ar@3{->}?(#1)+/u  0.2cm/;?(#1)+/d 0.2cm/^{#3}}
\newcommand{\utwocell}[3][0.5]{\ar@{}[#2] \ar@{=>}?(#1)+/d 0.2cm/;?(#1)+/u 0.2cm/_{#3}}
\newcommand{\dtwocelltarg}[3][0.5]{\ar@{}#2 \ar@{=>}?(#1)+/u  0.2cm/;?(#1)+/d 0.2cm/^{#3}}
\newcommand{\utwocelltarg}[3][0.5]{\ar@{}#2 \ar@{=>}?(#1)+/d  0.2cm/;?(#1)+/u 0.2cm/_{#3}}
\DeclareMathOperator{\im}{im}
\newcommand{\cat}[1]{\mathrm{\mathcal #1}}
\newcommand{\thg}{{\mathord{\text{--}}}}
\newcommand{\quot}{\delimiter"502F30E\mathopen{}}
\newcommand{\dbr}[1]{\mathord{\left\llbracket{#1}\right\rrbracket}}
\newcommand{\res}[2]{\left.{#1}\right|_{#2}}
\newcommand{\spn}[1]{{\langle{#1}\rangle}}
\newcommand{\defeq}{\mathrel{\mathop:}=}
\newcommand{\cd}[2][]{\vcenter{\hbox{\xymatrix#1{#2}}}}
\renewcommand{\phi}{\varphi}
\newcommand{\A}{{\mathcal A}}
\newcommand{\B}{{\mathcal B}}
\newcommand{\C}{{\mathcal C}}
\newcommand{\D}{{\mathcal D}}
\newcommand{\E}{{\mathcal E}}
\newcommand{\J}{{\mathcal J}}
\newcommand{\K}{{\mathcal K}}
\newcommand{\N}{{\mathcal N}}
\renewcommand{\O}{{\mathcal O}}
\renewcommand{\P}{{\mathcal P}}
\let\sec=\S
\renewcommand{\S}{{\mathcal S}}
\newcommand{\T}{{\mathcal T}}
\newcommand{\U}{{\mathcal U}}
\newcommand{\V}{{\mathcal V}}
\newcommand{\xtor}[1]{\cdl[@1]{{} \ar[r]|-{\object@{|}}^{#1} & {}}}
\def\hookleftarrowfill@{\arrowfill@\leftarrow\relbar{\relbar\joinrel\rhook}}
\def\twoheadleftarrowfill@{\arrowfill@\twoheadleftarrow\relbar\relbar}
\def\leftbararrowfill@{\arrowdoublefill@{\leftarrow\mkern-5mu}\relbar\mapstochar\relbar\relbar}
\def\Leftbararrowfill@{\arrowdoublefill@{\Leftarrow\mkern-2mu}\Relbar\Mapstochar\Relbar\Relbar}
\def\leftringarrowfill@{\arrowdoublefill@{\leftarrow\mkern-3mu}\relbar{\mkern-3mu\circ\mkern-2mu}\relbar\relbar}
\def\lefttriarrowfill@{\arrowfill@{\mathrel\triangleleft\mkern0.5mu\joinrel\relbar}\relbar\relbar}
\def\Lefttriarrowfill@{\arrowfill@{\mathrel\triangleleft\mkern1mu\joinrel\Relbar}\Relbar\Relbar}
\def\hookrightarrowfill@{\arrowfill@{\lhook\joinrel\relbar}\relbar\rightarrow}
\def\twoheadrightarrowfill@{\arrowfill@\relbar\relbar\twoheadrightarrow}
\def\rightbararrowfill@{\arrowdoublefill@{\relbar\mkern-0.5mu}\relbar\mapstochar\relbar\rightarrow}
\def\Rightbararrowfill@{\arrowdoublefill@{\Relbar\mkern-2mu}\Relbar\Mapstochar\Relbar\Rightarrow}
\def\rightringarrowfill@{\arrowdoublefill@\relbar\relbar{\mkern-2mu\circ\mkern-3mu}\relbar{\mkern-3mu\rightarrow}}
\def\righttriarrowfill@{\arrowfill@\relbar\relbar{\relbar\joinrel\mkern0.5mu\mathrel\triangleright}}
\def\Righttriarrowfill@{\arrowfill@\Relbar\Relbar{\Relbar\joinrel\mkern1mu\mathrel\triangleright}}
\def\leftrightarrowfill@{\arrowfill@\leftarrow\relbar\rightarrow}
\def\mapstofill@{\arrowfill@{\mapstochar\relbar}\relbar\rightarrow}
\renewcommand*\xleftarrow[2][]{\ext@arrow 20{20}0\leftarrowfill@{#1}{#2}}
\providecommand*\xLeftarrow[2][]{\ext@arrow 60{22}0{\Leftarrowfill@}{#1}{#2}}
\providecommand*\xhookleftarrow[2][]{\ext@arrow 10{20}0\hookleftarrowfill@{#1}{#2}}
\providecommand*\xtwoheadleftarrow[2][]{\ext@arrow 60{20}0\twoheadleftarrowfill@{#1}{#2}}
\providecommand*\xleftbararrow[2][]{\ext@arrow 10{22}0\leftbararrowfill@{#1}{#2}}
\providecommand*\xLeftbararrow[2][]{\ext@arrow 50{24}0\Leftbararrowfill@{#1}{#2}}
\providecommand*\xleftringarrow[2][]{\ext@arrow 10{26}0\leftringarrowfill@{#1}{#2}}
\providecommand*\xlefttriarrow[2][]{\ext@arrow 80{24}0\lefttriarrowfill@{#1}{#2}}
\providecommand*\xLefttriarrow[2][]{\ext@arrow 80{24}0\Lefttriarrowfill@{#1}{#2}}
\renewcommand*\xrightarrow[2][]{\ext@arrow 01{20}0\rightarrowfill@{#1}{#2}}
\providecommand*\xRightarrow[2][]{\ext@arrow 04{22}0{\Rightarrowfill@}{#1}{#2}}
\providecommand*\xhookrightarrow[2][]{\ext@arrow 00{20}0\hookrightarrowfill@{#1}{#2}}
\providecommand*\xtwoheadrightarrow[2][]{\ext@arrow 03{20}0\twoheadrightarrowfill@{#1}{#2}}
\providecommand*\xrightbararrow[2][]{\ext@arrow 01{22}0\rightbararrowfill@{#1}{#2}}
\providecommand*\xRightbararrow[2][]{\ext@arrow 04{24}0\Rightbararrowfill@{#1}{#2}}
\providecommand*\xrightringarrow[2][]{\ext@arrow 01{26}0\rightringarrowfill@{#1}{#2}}
\providecommand*\xrighttriarrow[2][]{\ext@arrow 07{24}0\righttriarrowfill@{#1}{#2}}
\providecommand*\xRighttriarrow[2][]{\ext@arrow 07{24}0\Righttriarrowfill@{#1}{#2}}
\providecommand*\xmapsto[2][]{\ext@arrow 01{20}0\mapstofill@{#1}{#2}}
\providecommand*\xleftrightarrow[2][]{\ext@arrow 10{22}0\leftrightarrowfill@{#1}{#2}}
\providecommand*\xLeftrightarrow[2][]{\ext@arrow 10{27}0{\Leftrightarrowfill@}{#1}{#2}}
\crefname{equation}{}{}
\crefname{Lemma}{Lemma}{Lemmas}
\crefname{Thm}{Theorem}{Theorems}
\crefname{Defn}{Definition}{Definitions}
\crefname{Not}{Notation}{Notations}
\crefname{Ex}{Example}{Examples}
\crefname{Exs}{Examples}{Examples}
\crefname{sec}{Section}{Sections}
\crefname{Prop}{Proposition}{Propositions}
\crefname{Rk}{Remark}{Remarks}
\numberwithin{equation}{section}
\theoremstyle{plain}
\newtheorem{Thm}{Theorem}[section]
\newtheorem{Prop}[Thm]{Proposition}
\newtheorem{Lemma}[Thm]{Lemma}
\newtheorem*{Thm*}{Theorem}
\theoremstyle{definition}
\newtheorem{Defn}[Thm]{Definition}
\newtheorem{Not}[Thm]{Notation}
\newtheorem{Rk}[Thm]{Remark}
\newcommand{\GBA}{\mathrm{gr}\cat{\B \A lg}}
\newcommand{\BA}{\cat{B \A lg}}
\newcommand{\Mon}{\cat{Mon}}
\newcommand{\Top}{\J}
\newcommand{\BJ}{B_{\J}}
\newcommand{\dc}[2]{\left[\smash{{#1} \mathbin{\mid}{#2} }\right]}
\newcommand{\BM}{{\dc B M}}
\newcommand{\BJM}{{\dc {\BJ} M}}
\newcommand{\sheq}[2]{{\dbr{{#1}\mathrel{\!\texttt{\upshape =}\!}{#2}}}}
\begin{document}
\leftmargini=2em \title[Cartesian closed varieties II]{Cartesian
  closed varieties II: \\ links to algebra and self-similarity} \author{Richard Garner} \address{School of
  Math.~\& Phys.~Sciences, Macquarie University, NSW 2109,
  Australia} \email{richard.garner@mq.edu.au}

\date{\today}

\thanks{The support of Australian Research Council grant DP190102432
  is gratefully acknowledged.}

\begin{abstract}
  This paper is the second in a series investigating cartesian closed
  varieties. In first of these, we showed that every non-degenerate
  finitary cartesian variety is a variety of sets equipped with an
  action by a Boolean algebra $B$ and a monoid $M$ which interact to
  form what we call a matched pair $\BM$. In this paper, we show that
  such pairs $\BM$ are equivalent to \emph{Boolean restriction
    monoids} and also to \emph{ample source-\'etale topological
    categories}; these are generalisations of the Boolean inverse
  monoids and ample \'etale topological groupoids used to encode
  self-similar structures such as Cuntz and Cuntz--Krieger
  $C^\ast$-algebras, Leavitt path algebras and the $C^\ast$-algebras
  associated to self-similar group actions. We explain and
  illustrate these links, and begin the programme of understanding how
  topological and algebraic properties of such groupoids can be
  understood from the logical perspective of the associated varieties.
\end{abstract}
\maketitle
\setcounter{tocdepth}{1}
\tableofcontents
\section{Introduction}

This paper is a continuation of the investigations
of~\cite{Garner2023CartesianI} into \emph{cartesian closed
  varieties}---that is, varieties of single-sorted, possibly
infinitary algebras which, seen as categories, are cartesian closed.
One of the main results of \emph{op.~cit.}~was that the category of
non-degenerate, finitary, cartesian closed varieties is equivalent to
the category of non-degenerate \emph{matched pairs of algebras} $\BM$.
Here, a \emph{matched pair of algebras} comprises a Boolean algebra
$B$ and a monoid $M$ which act on each other in a way first described
in~\cite{Jackson2009Semigroups}; one way to say it is that $M$ acts on
$B$ via continuous endomorphisms of its associated Stone space, while
$B$ acts on $M$ so as to make it into a \emph{sheaf} of continuous
functions on $B$. When $M$ acts faithfully on $B$, the structure
generalises that of a \emph{pseudogroup}~\cite{Ehresmann1954Structures} of
automorphisms, where the generalisation is that $M$ is a monoid of
not-necessarily-invertible functions.

This description points to a connection between our matched pairs of
algebras and the study of self-similar structures in non-commutative
algebra, operator algebra and semigroup theory. Following the
pioneering work of Renault~\cite{Renault1980A-groupoid} and, later,
Steinberg~\cite{Steinberg2010Groupoid}, a key idea in this area has
been that analytic and algebraic objects such as the Cuntz
$C^\ast$-algebra or the Leavitt algebras can be constructed from
certain kinds of topological groupoids known as \emph{ample
  groupoids}; these are groupoids whose space of objects $C_0$ is a
Stone (= totally disconnected compact Hausdorff) space and which are
\emph{source-\'etale}, meaning that the source map
$s \colon C_1 \rightarrow C_0$ is a local homeomorphism.
In~\cite{Lawson2012Non-commutative}, Lawson showed that such groupoids
correspond under ``non-commutative Stone duality'' to \emph{Boolean
  inverse monoids}, which are abstract monoids of partial isomorphisms
equipped with extra structure allowing them to be represented on the
inverse monoid of partial homeomorphisms of a Stone space.

The first main result of this paper shows that the two-sorted notion
of matched pair of algebras $\BM$ corresponds to a single-sorted
notion which generalises that of a Boolean inverse monoid, namely,
that of \emph{Boolean restriction monoid}~\cite{Cockett2009Boolean} or
a \emph{modal restriction semigroup with preferential
  union}~\cite{Jackson2011Modal}; this is an abstract monoid of
partial \emph{functions} equipped with extra structure allowing it to
be represented on the monoid of partial \emph{endomorphisms} of a
Stone space. Thus, in Section~\ref{sec:links-non-comm} we prove
(Theorems~\ref{thm:6} and~\ref{thm:7}):
\begin{Thm*}
  The category of (Grothendieck) matched pairs of algebras is
  equivalent to the category of (Grothendieck) Boolean restriction
  monoids.
\end{Thm*}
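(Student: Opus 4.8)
The plan is to establish the equivalence by constructing a functor in each direction and showing they are mutually inverse (or at least adjoint with invertible unit/counit). The key structural insight driving the whole argument is that a matched pair $\BM$ is a two-sorted gadget---a Boolean algebra $B$ together with a monoid $M$ acting on each other---whereas a Boolean restriction monoid $S$ is a single-sorted gadget, an abstract monoid of ``partial functions'' whose domains of definition are governed by a restriction operation. So the heart of the correspondence must be a way to recover both $B$ and $M$ from the single sort $S$, and conversely to assemble a single sort $S$ from the pair $(B,M)$.

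Going from a matched pair to a restriction monoid, first I would take $S$ to be a suitable ``crossed product'' or semidirect-type construction built from $B$ and $M$: concretely, one expects the elements of $S$ to be pairs $(b,m)$ with $b \in B$ and $m \in M$, thought of as ``the partial endomorphism $m$ restricted to the clopen domain $b$.''  The monoid multiplication on $S$ should combine the given action of $M$ on $B$ (to transport domains) with the action of $B$ on $M$ (to restrict functions), and the restriction idempotents of $S$ should be exactly the pairs $(b, 1)$, so that the semilattice of restriction idempotents recovers $B$ as a Boolean algebra. The axioms of a Boolean restriction monoid---the restriction identities, the existence of compatible joins, and the Boolean (complementation) structure on the idempotents---should then follow mechanically from the matched-pair axioms, with the Boolean structure coming directly from $B$ and the join structure coming from the sheaf-theoretic gluing that the action of $B$ on $M$ encodes.

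Conversely, from a Boolean restriction monoid $S$ I would recover $B$ as the Boolean algebra of restriction idempotents of $S$, and then reconstruct $M$ from $S$ by a quotient or localisation that forgets the domain information: intuitively $M$ is $S$ modulo the equivalence ``$s \sim t$ when they agree as germs of functions,'' i.e.\ the monoid of total-function representatives. The two actions are then read off: $M$ acts on $B$ because conjugation/restriction in $S$ lets an element transport idempotents, and $B$ acts on $M$ because every idempotent can restrict any germ. One then checks that the matched-pair axioms hold, that this assignment is functorial, and that the two constructions are mutually inverse up to natural isomorphism. Finally, the parenthetical ``Grothendieck'' versions are handled by observing that both constructions are compatible with the relevant non-degeneracy/completeness conditions, so the equivalence restricts appropriately.

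The main obstacle, I expect, will be verifying that the join (or ``preferential union'') structure matches up correctly on both sides. On the matched-pair side the joins are controlled by the sheaf condition on the action of $B$ on $M$---gluing locally-defined germs over a partition of a clopen set---while on the restriction-monoid side they are axiomatised abstractly as compatible joins in $S$. Showing that a family in $S$ is \emph{compatible} (and hence joinable) precisely when the corresponding germs glue over disjoint clopen domains, and that the resulting join agrees under both functors, is the delicate point; the rest of the correspondence is essentially a bookkeeping exercise translating the two-sorted axioms into single-sorted ones and back. I would isolate this compatibility-and-joins comparison as a separate lemma and prove it before assembling the two functors.
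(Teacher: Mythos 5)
Your overall strategy coincides with the paper's (Propositions~\ref{prop:25} and~\ref{prop:26}, Theorems~\ref{thm:6} and~\ref{thm:7}): build $S$ from $\BM$ as ``partial maps $m$ restricted to domains $b$'', recover $B$ as the restriction idempotents $E(S)$ and $M$ from the total elements, and handle the Grothendieck case by transporting the topology across $B \cong E(S)$. But two of your steps have genuine gaps. First, the elements of $S$ cannot be raw pairs $(b,m)$: one must take the second component modulo $\equiv_b$, i.e.\ $S = \{(b,m) : b \in B,\, m \in M/\mathord{\equiv_b}\}$ as in Proposition~\ref{prop:26}. This is not cosmetic bookkeeping. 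With raw pairs, multiplication $(b,m)(c,n) = (b \wedge m^\ast c, mn)$ and restriction $(b,m)^+ = (b,1)$, the induced partial order is $(b,m) \leqslant (c,n)$ iff $b \leqslant c$ and $m = n$ \emph{on the nose}; consequently two disjoint elements $(b,m)$ and $(b',n)$ with $m \neq n$ have no common upper bound whatsoever, and the axiom requiring joins of disjoint pairs---precisely the ``delicate point'' you flag---fails outright. Passing to classes modulo $\equiv_b$ is exactly what makes $(b \vee b', b(m,n))$ an upper bound of both.

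Second, recovering $M$ from $S$ by ``a quotient or localisation that forgets the domain information'' cannot work: there is no such monoid congruence. Any formalisation of ``$s \sim t$ when they agree as germs'' identifies $s$ with its total extension $\check s = s \vee (s^+)'$, and in particular identifies $0$ with $\check 0 = 1$; but a monoid congruence with $0 \sim 1$ forces $0 = 0s \sim 1s = s$ for every $s$, collapsing $S$ entirely. What is needed (and what the paper takes) is the \emph{submonoid} $M = \mathrm{Tot}(S)$ of total elements---no doubt what you intend by ``total-function representatives'', but submonoid versus quotient matters for every subsequent verification. Relatedly, you have placed the crux of the proof in the wrong spot: once the quotient above is in place the joins comparison is routine, and the step that actually carries the theorem is the factorisation $s = s^+ \check s$ of an arbitrary element as a restriction idempotent times a total element (valid because $s$ and $(s^+)'$ are disjoint, so $\check s$ exists and is total). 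This is what lets the two-sorted data $(E(S), \mathrm{Tot}(S))$ see all of the one-sorted $S$: it gives your ``mutually inverse'' check at the object level, gives faithfulness of the functor (a homomorphism is determined by its values on idempotents and total elements), and gives fullness (a matched-pair morphism $\dc{\varphi}{f}$ lifts to $\psi(s) = \varphi(s^+)f(\check s)$, which one then verifies is a homomorphism of Boolean restriction monoids). Without this lemma, the roundtrip and the morphism-level comparison in your plan cannot be completed.
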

We should explain the modifier ``Grothendieck''. The matched pairs of
algebras $\BM$ described above corresponds to \emph{finitary}
cartesian closed varieties. However, there are also what we have
termed \emph{Grothendieck} matched pairs $\BJM$ which correspond to
possibly \emph{infinitary} cartesian closed varieties. In these, our
Boolean algebra $B$ comes equipped with a collection $\J$ of
``well-behaved'' infinite partitions, encoding the operations of
infinite arity. Correspondingly, there is a notion of
\emph{Grothendieck Boolean restriction monoid} involving partial
functions which can be patched together over possibly \emph{infinite}
partitions from such a collection $\J$; these, then, are the two sides
of the extended correspondence above.

Now, as shown in~\cite{Cockett2021Generalising}, Boolean restriction
monoids correspond under an extended non-commutative Stone duality to
what might be termed \emph{ample topological categories}---namely,
source-\'etale topological categories with Stone space of objects.
Thus, our matched pairs $\BM$ present, among other things, the ample
topological groupoids of interest to operator algebraists. (In the
Grothendieck case, a little more care is necessary; for here, the
analogue of the Boolean prime ideal lemma may fail to hold, i.e.,
$B_\J$ may fail to have enough points, so there may be no faithful
representation by a topological category; nonetheless, in the spirit
of~\cite{Resende2006Lectures}, we do always obtain a zero-dimensional
\emph{localic} category.)

The preceding observations indicate a potentially interesting new
research direction. A particularly fruitful line of enquiry in recent
years has involved relating analytic properties of the
$C^\ast$-algebras generated by ample groupoids, and algebraic
properties of the corresponding algebras (``Steinberg algebras'') over
a ring. The new direction would seek to further relate these to
syntactic and semantic properties of the \emph{variety} associated to
a given ample groupoid. (At present, there is rather little to the
analytic or algebraic side that matches up with the varieties
associated to ample topological \emph{categories}, but some recent
progress has been made in~\cite{Castro2022Etale}.)

The second and third main results of this paper can be seen as first
steps in this new direction. We begin by re-addressing a question
considered by Johnstone in~\cite{Johnstone1985When}: when is a variety
a topos? As we recall in Section~\ref{sec:when-variety-topos} below, a
\emph{topos} is a finitely complete cartesian closed category with a
subobject classifier, and so we can equally well phrase the question
as: when is a cartesian closed variety a topos?
In~\cite{Johnstone1985When}, Johnstone gives a rather delicate
syntactic description, but using our now-richer understanding of
cartesian closed varieties, we can simplify this drastically. We will
show (Theorem~\ref{thm:8}):
\begin{Thm*}
  The cartesian closed variety of $\BJM$-sets is a topos just when,
  for any $b \neq 0 \in B$, there exists some $m \in M$ such that
  $m^\ast b = 1$; or equivalently, just when the associated
  topological or localic category is minimal.
\end{Thm*}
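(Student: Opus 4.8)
The plan is to use that the variety of $\BJM$-sets is, by hypothesis, cartesian closed and, being a variety, has all finite limits; so it is a topos precisely when it admits a subobject classifier. Thus everything reduces to analysing when such a classifier $\Omega$ can exist. First I would pin down the only possible candidate: since the underlying-set functor is represented by the free $\BJM$-set $F$ on one generator, the underlying set of any classifier must be $\mathrm{Sub}(F)$, the poset of subalgebras of $F$. The first substantive step is therefore to compute $\mathrm{Sub}(F)$ in terms of the matched-pair data, and in particular to identify the subobjects of the terminal object $1$ with the $M$-invariant ideals of $B$---those ideals $I \subseteq B$ with $m^\ast b \in I$ whenever $b \in I$ and $m \in M$. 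This dictionary is where the Boolean action and the $M$-action combine, and I expect it to come straight out of the description of $\BJM$-sets as (pre)sheaves carrying a restriction-monoid action established earlier.

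With this dictionary in hand, I would prove the forward implication (topos $\Rightarrow$ condition) formally. A single-sorted variety is \emph{connected} (it does not split as a nontrivial product), so its terminal object is indecomposable; and the ambient Boolean algebra $B$ forces every subterminal to be complemented, so that $\mathrm{Sub}(1)$ is a Boolean algebra. A connected Boolean, hence two-valued, situation leaves only $\mathrm{Sub}(1) = \{0,1\}$, which under the dictionary says that the only $M$-invariant ideals of $B$ are $0$ and $B$. Unwinding this last statement gives exactly the displayed condition: if some nonzero $b$ had $m^\ast b \neq 1$ for all $m$, the invariant ideal it generates would be proper.

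The converse (condition $\Rightarrow$ topos) is where the real work lies, and I expect it to be the main obstacle. Here I must \emph{construct} $\Omega$ and verify the classification axiom. The candidate is the two-element algebra $1 + 1$ with the evident map $\mathrm{true}\colon 1 \to 1+1$; the content is that, assuming the condition, every subalgebra $X' \hookrightarrow X$ is the pullback of $\mathrm{true}$ along a unique characteristic homomorphism $\chi\colon X \to 1+1$. Minimality is exactly what makes such a $\chi$ exist and be a homomorphism: it guarantees that membership in $X'$ is decided on a clopen, $M$-stable part of the Stone space of $\BJ$, so that the two-valued function $\chi$ respects both the $B$-action and the $M$-action; the $\J$-patching property of $M$ (the sheaf/gluing condition packaged in the notion of Grothendieck matched pair) is what lets these local decisions be assembled over a $\J$-partition into a single global $\chi$. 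The crux is precisely this well-definedness and homomorphy of $\chi$---showing that the condition rules out the kind of partial, undecidable subobject that would obstruct classification.

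Finally I would establish the equivalence of the algebraic condition with minimality of the associated topological or localic category. Under (non-commutative) Stone duality the object space is the Stone space of $\BJ$, its open invariant subsets correspond to the $M$-invariant ideals of $B$, and minimality asserts that the only such opens are the empty set and the whole space. One direction is immediate; for the other, given $b \neq 0$ one forms the invariant ideal it generates, which by minimality is all of $B$, so $1$ is a $\J$-join of elements below various $m_i^\ast b$, and the $\J$-sheaf property of $M$ patches the $m_i$ into a single $m$ with $m^\ast b = 1$. In the genuinely pointless case I would run this argument on the frame of ideals directly, so as to cover the localic statement uniformly.
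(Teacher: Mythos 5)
Both halves of your proposal contain a genuine gap, and they share a common source: the dictionary you rely on is valid in the associated \emph{sheaf} topos, not in the variety itself. For the forward direction, you propose to identify subobjects of the terminal object of $\BJM\text-\cat{Set}$ with $M$-invariant ideals of $B$, and to deduce the displayed condition from $\mathrm{Sub}(1)=\{0,1\}$. But the terminal object of the variety is the one-element algebra, whose only subalgebras are the empty algebra and itself (the theory has no constants); so $\mathrm{Sub}(1)=\{0,1\}$ holds for \emph{every} non-degenerate $\BJM$, topos or not, and no statement about invariant ideals can be extracted from it. The $M$-invariant $\J$-closed ideals are the subterminals of the sheaf topos $\BJM\text-\cat{Shv}$ of Definition~\ref{def:28} (equivalently, the open sieves of the classifying category); the discrepancy between that lattice and the trivial lattice $\mathrm{Sub}(1)$ of the variety is precisely what the theorem measures, so it cannot be elided. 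The paper's argument for (topos $\Rightarrow$ condition) is quite different: given $b\neq 0$, consider $\varphi\colon M\rightarrow B$, $m\mapsto m^\ast b$; the two points $0,1\colon 1\rightarrow B$ are jointly epimorphic because $0,1$ generate $B$ as a $\BJ$-set, jointly epimorphic families are pullback-stable in a topos, and if $\varphi^{-1}(1)$ were empty then $\varphi^{-1}(0)\rightarrowtail M$ would be an epimorphic monomorphism, hence invertible, forcing $b=1^\ast b=0$.

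For the converse, your candidate classifier $\Omega=1+1$ cannot work: a topos has $1+1$ as subobject classifier exactly when it is Boolean, and the varieties in question are typically non-Boolean toposes. The J\'onsson--Tarski variety is the test case: it satisfies the condition (its classifying groupoid, the Cuntz groupoid, is minimal) and is a topos, yet homomorphisms $F1\rightarrow 1+1$ correspond only to \emph{complemented} closed ideals of $A^\ast$, while $\mathrm{Sub}(F1)$ consists of \emph{all} closed ideals. The closed ideal of words containing at least one $\ell$ is not complemented, so the corresponding subobject of $F1$ admits no characteristic map to $1+1$; no amount of patching over $\J$-partitions repairs this, because the obstruction is not one of assembling local data but of the subobject failing to be decidable at all. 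This is why the paper never constructs $\Omega$ inside the variety: it instead builds the Grothendieck topos $\BJM\text-\cat{Shv}$ (Proposition~\ref{prop:43}), shows $\BJM\text-\cat{Set}$ is its two-valued collapse (Proposition~\ref{prop:29}), and then shows the condition forces the collapse to be the whole topos. Your final paragraph---identifying minimality with the triviality of invariant $\J$-closed ideals and patching the $n_i$ into a single $m$ via the $\BJ$-set structure of $M$---is essentially the paper's own argument for (i) $\Leftrightarrow$ (v) of Theorem~\ref{thm:8} and is sound, but it is the only part of the proposal that survives.
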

Here, $m^\ast b$ is the action of $m$ on $b$---which from the spatial
perspective is obtained by taking the inverse image of the clopen set
$b$ along the continuous endomorphism $m$. Rather than prove the above
theorem directly, we approach it via a new proof of one of the main
results of~\cite{Johnstone1990Collapsed}. Theorem~1.2 of
\emph{op.~cit.} shows that every cartesian closed variety arises as
the ``two-valued collapse'' of an essentially-unique topos $\E$, where
the ``two-valued collapse'' is obtained by restricting to those
objects whose support is either~$0$ or~$1$.
In~\cite{Johnstone1990Collapsed} the topos $\E$ whose collapse is a
given cartesian closed variety is found via a \emph{tour de force}
construction which leaves its nature rather mysterious. Our results
allow us to give a concrete presentation of $\E$ as a topos of
\emph{sheaves} on the (Grothendieck) matched pair of algebras which
classifies our variety. Once we have this (in
Proposition~\ref{prop:29}), Theorem~\ref{thm:8} follows easily.

The third main result of this paper describes the semantic and
syntactic properties of a variety which corresponds to its associated
topological or localic category actually being a \emph{groupoid}, and
as such in the more traditional purview of operator algebra. These
properties of a variety can be motivated by the case of $M$-sets, for
which the obvious ``groupoidal'' condition is that the monoid $M$
should in fact be a group. This syntactic condition on $M$ corresponds
to a \emph{semantic} one: the monoid $M$ is a group precisely when the
forgetful functor from $M$-sets to sets preserves the cartesian closed
structure. It turns out that exactly the same semantic condition
characterises the groupoidality of the associated category for an
\emph{arbitrary} $\BJM$; this is our Theorem~\ref{thm:9}, which shows,
among other things, that:
\begin{Thm*}
  \label{thm:1}
  The associated localic category of a Grothendieck matched pair
  $\BJM$ is a groupoid if, and only if, the forgetful functor from
  $\BJM$-sets to $\BJ$-sets preserves the cartesian closed structure.
\end{Thm*}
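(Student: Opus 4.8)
The plan is to reduce the statement to the behaviour of the internal homs and then read off the groupoid condition through the duality. First I would observe that the forgetful functor $U$ from $\BJM$-sets to $\BJ$-sets is monadic: a $\BJM$-set is a $\BJ$-set equipped with a compatible $M$-action, and $U$ merely discards that action, so that $U$ is the forgetful functor between two varieties induced by forgetting operations. Hence $U$ creates limits and in particular preserves finite products, and it has a left adjoint $F$ given by the free $M$-action. Consequently $U$ preserves the cartesian structure automatically, and the only question is whether it preserves exponentials. Writing $[\thg,\thg]$ for the internal hom in each variety, one has for each pair of $\BJM$-sets $X,Y$ a canonical comparison map $\theta_{X,Y}\colon U[X,Y]\to[UX,UY]$, adjoint to the evaluation of $[X,Y]$ transported across the product-preserving $U$; and $U$ preserves the cartesian closed structure exactly when every $\theta_{X,Y}$ is invertible.

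Next I would make $\theta_{X,Y}$ explicit using the description of exponentials in $\BJM$-sets. An element of $[X,Y]$ lying over a given part of $\BJ$ is the datum of a $Y$-valued assignment on $X$ together with coherent transport data along the elements $m\in M$; the map $\theta_{X,Y}$ forgets this transport and retains only the underlying $\BJ$-indexed family of functions. Thus $\theta_{X,Y}$ is invertible for all $X,Y$ precisely when such transport data are both uniquely determined by, and freely imposable on, an arbitrary underlying family---which is exactly an invertibility statement about the elements of $M$. To turn the quantifier ``for all $X,Y$'' into a usable criterion, I would test $\theta$ on the free $\BJM$-sets $FP$ for $P$ a $\BJ$-set, which generate the category under $F\dashv U$; a density argument then shows that checking $\theta$ on these generators suffices.

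I would then transport the resulting condition across the equivalence between matched pairs and Boolean restriction monoids and the extended non-commutative Stone duality of~\cite{Cockett2021Generalising}. Under these, $\BJM$ corresponds to a source-\'etale localic category with object locale $B_\J$, an element $m\in M$ corresponds to a bisection, and such a morphism is invertible exactly when the target map is a local homeomorphism over its image---equivalently, when the ambient Boolean restriction monoid is in fact an \emph{inverse} monoid. The category is a groupoid precisely when all morphisms are invertible. Matching this against the pointwise statement of the previous step yields both implications: the ``if'' direction proceeds as in the classical case of a group $M$, where $[X,Y]$ carries a conjugation-type action and $\theta$ is visibly iso, while the ``only if'' direction produces, from any non-invertible $m$, a pair $X,Y$ for which a section of $[UX,UY]$ fails to lift, obstructing the surjectivity of $\theta$.

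The main obstacle is the ``only if'' direction, together with the explicit exponential formula underlying it. The delicate point is to extract, from the mere preservation of exponentials, the invertibility of \emph{each individual} $m\in M$: one must select $X,Y$ so that the discrepancy between $U[X,Y]$ and $[UX,UY]$ detects exactly the failure of $m$ to admit a partial inverse, mirroring the inequality $\Set^M(M\times X,Y)\neq\Set(X,Y)$ that already separates monoids from groups in the one-object case. A secondary subtlety is the Grothendieck layer: when $B_\J$ has too few points one must work throughout with the localic rather than the topological category, ensuring that the exponential computation and the invertibility criterion are both phrased point-free.
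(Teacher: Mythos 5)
You have correctly set up the problem (reduce to invertibility of a comparison map attached to the adjunction $F \dashv U$, then translate across non-commutative Stone duality), but the target you translate it \emph{to} is wrong, and this breaks your ``only if'' direction. You claim that groupoidality of the classifying category corresponds to each $m \in M$ being invertible, equivalently to the ambient Boolean restriction monoid being an \emph{inverse} monoid, and you plan to manufacture, from ``any non-invertible $m$'', a pair $X,Y$ obstructing $\theta_{X,Y}$. In fact groupoidality corresponds to the strictly weaker, \emph{local} condition that $S_\J$ is \'etale---every element is an admissible join of \emph{partial} isomorphisms---equivalently to condition~\eqref{eq:41} of Theorem~\ref{thm:9}: for each $m$ there is a partition $\{b_i : i \in I\} \in \J$ and elements $n_i, c_i$ with $b_i \leqslant m^\ast c_i$, $mn_i \equiv_{b_i} 1$ and $n_i m \equiv_{c_i} 1$. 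The point is that morphisms of the classifying category are \emph{germs} $(\U, m)$, so invertibility of all morphisms only forces $m$ to be invertible on the pieces of a partition, never globally. Concretely, for J\'onsson--Tarski algebras the forgetful functor is cartesian closed and the classifying category is the Cuntz groupoid $\mathfrak{O}_2$, yet $M = \mathit{tot}M_{2,1}$ is not a group and $S$ is not an inverse monoid (the element $\ell^\ast \vee r^\ast$ is two-to-one); hence no obstructing pair $X,Y$ can exist for a merely non-invertible $m$, and the construction you describe as the ``delicate point''---extracting ``the invertibility of each individual $m \in M$''---cannot succeed because the conclusion is false. The same misidentification weakens your ``if'' direction: with only partition-wise inverses $n_i$, the conjugation action on function-spaces is not the naive formula~\eqref{eq:57} but the glued version $(m \cdot f)(y) \equiv_{b_i} m \cdot f(n_i \cdot y)$ of~\eqref{eq:58}, and checking this is well defined is where the real work lies; the paper does it via the equivalence $x \equiv^m_b y \iff m \cdot x \equiv_b m \cdot y$ of~\eqref{eq:45}, proved from~\eqref{eq:41} using Lemma~\ref{lem:12}.

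Two secondary gaps. First, the left adjoint to $U$ is not ``the free $M$-action'' $M \times X$: the $\BJ$-set structure forces the subtler construction $M \otimes_B X$, a quotient of the free $\BJM$-set $M \times T_{\BJ}X$ by a congruence built from the relations $\equiv^m_b$ (Definition~\ref{def:31}, Proposition~\ref{prop:31}); indeed $U$ is cartesian closed precisely when the canonical map $\theta_X \colon M \otimes_B X \rightarrow M \times X$ is invertible (Lemma~\ref{lem:13}), and it is exactly the discrepancy between these two objects---tested at $X = M$ on the element $(m,1)$---that yields the partition in~\eqref{eq:41}. Second, your reduction of $\theta_{X,Y}$ to free objects ``by a density argument'' is asserted rather than carried out; the paper avoids this entirely by invoking the Frobenius criterion for cartesian closed adjunctions (an adjunction has $U$ cartesian closed just when the maps $F(B \times UA) \rightarrow FB \times A$ are invertible), which collapses the two-variable comparison into the single-variable $\theta_X$. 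This second point is repairable; the global-versus-local invertibility error is the one that sinks the proposed proof.
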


Corresponding to this semantic condition, we provide a syntactic
condition on $\BJM$ which is slightly complex, but is very natural in
terms of the associated Boolean restriction monoid, where it becomes
precisely the condition that this should be generated by its Boolean
inverse monoid of partial isomorphisms.

The final contribution made by this paper is not in further results,
but in further examples, which describe explicitly the cartesian
closed varieties which give rise to some of the better-known ample
topological groupoids studied in operator algebra. In particular, we
show (Section~\ref{sec:jonsson-tarski-topos-1}) that the \emph{Cuntz
  groupoid} $\mathfrak{O}_2$, whose $C^\ast$-algebra is the Cuntz
$C^\ast$-algebra $\O_2$, is the associated groupoid of the cartesian
closed variety---in fact a topos---of \emph{J\'onsson--Tarski
  algebras}, that is, sets $X$ endowed with an isomorphism
$X \cong X \times X$. This result has an obvious generalisation,
replacing $2$ by any finite cardinal $n$, but in fact, since we have
the notion of Grothendieck Boolean algebra available, we can consider
(Section~\ref{sec:infin-jonss-tarski}) an \emph{infinitary}
generalisation which replaces $2$ by an arbitrary set $A$, and
considers the topos of sets endowed with an isomorphism
$X \rightarrow X^A$. As a further generalisation of this, we describe
(Section~\ref{sec:nekrashevych-toposes}) a topos which encodes the
topological groupoid associated to a \emph{self-similar group action}
in the sense of~\cite{Nekrashevych2005Self-similar,
  Nekrashevych2009Cstar}. For our final substantive example
(Section~\ref{sec:cuntz-krieg-topos-1}), we describe
following~\cite{Leinster2007Jonsson, Henry2016Convolution} a cartesian
closed variety which encodes the \emph{graph groupoid} associated to
any directed graph by the machinery of~\cite{Kumjian1997Graphs}.

We should note that here we have only really scratched the surface of
the links with operator algebra. For example, the varieties just
described can be extended to ones which encode the topological
groupoids associated to higher rank graphs~\cite{Kumjian2000Higher};
self-similar actions of groupoids on
graphs~\cite{Laca2018Equilibrium}; or graphs of
groups~\cite{Brownlowe2017C}. Moreover, it seems there may be
low-hanging fruit towards a general structure theory of matched pairs
$\BJM$; for example, both the self-similar group examples studied here
and also the examples involving higher-rank graphs should arise as
instances of a \emph{Zappa-Sz\'ep product} or \emph{distributive law}
between matched pairs $\BJM$ and $\dc {C_\K}{N}$. In a similar spirit,
we could enquire after a general notion of \emph{correspondence}
between two matched pairs, and a \emph{Cuntz--Pimsner} construction
for building new matched pairs out of such a correspondence: but all
of this must await future work.

\section{Background}
\label{sec:background}

\subsection{$B$-sets and $\BJ$-sets}
In this preliminary section, we gather together background
from~\cite{Garner2023CartesianI} that will be needed for the further
developments of this paper. We begin by recalling the notion of an
``action'' of a Boolean algebra on a set, due
to~\cite{Bergman1991Actions}.

\begin{Defn}[$B$-sets]
  \label{def:5}
  Let $B = (B, \wedge, \vee, 0, 1, (\thg)')$ be a non-degenerate
  Boolean algebra (i.e., $0 \neq 1$). A \emph{$B$-set} is a set $X$
  with an operation $B \times X \times X \rightarrow X$, written
  $(b,x,y) \mapsto b(x,y)$, satisfying the axioms
  \begin{equation}
    \label{eq:2}
    \begin{aligned}
      b(x,x) &= x \qquad b(b(x,y),z) = b(x,z) \qquad b(x,b(y,z)) = b(x,z)\\
      1(x,y) &= x \qquad b'(x,y) = b(y,x) \qquad (b \wedge c)(x,y) = b(c(x,y),y)\rlap{ .}
    \end{aligned}
  \end{equation}
\end{Defn}
One way to think of a $B$-set is as a set of ``random variables''
varying over the (logical) state space $B$; then the element $b(x,y)$
can be interpreted as the random variable
$\mathsf{if}\ b\ \mathsf{then}\ x\ \mathsf{else}\ y$. Another
interpretation is that elements of a $B$-set $X$ are objects with
``parts'' indexed by the elements of $B$; then $b(x,y)$ is the result
of restricting $x$ to its $b$-part and $y$ to its $b'$-part, and
glueing the results back together again. One readily recognises this
as part of the structure of a \emph{sheaf} on the Boolean algebra
$B$---more precisely, the structure borne by the set of global
sections of such a sheaf. Not every sheaf on $B$ has a global section;
but for one which does, \emph{every} section can be extended to a
global section, so that the $B$-sets are equally those sheaves on $B$
which are either empty, or have at least one global section.

Now, the notion of $B$-set is a finitary one, and this may be
inconvient in a Boolean algebra which admits \emph{infinite}
partitions; one may wish to ``logically condition'' on the elements of
such an infinite partition, but none of the finitary $B$-set
operations allow for this. This can be rectified with a more refined
kind of action by a Boolean algebra that is equipped with a
suitable collection of ``well-behaved'' infinite joins:

\begin{Defn}[Partition]
  \label{def:6}
  Let $B$ be a Boolean algebra and $b \in B$. A \emph{partition of
    $b$} is a subset $P \subseteq B \setminus \{0\}$ such that
  $\bigvee P = b$, and $c \wedge d = 0$ whenever $c \neq d \in P$. An
  \emph{extended partition of $b$} is a subset $P \subseteq B$
  (possibly containing $0$) satisfying the same conditions. If $P$ is
  an extended partition of $b$, then we write
  $P^- = P \setminus \{0\}$ for the corresponding partition. We 
  say merely ``partition'' to mean ``partition of $1$''.
\end{Defn}
\begin{Defn}[Zero-dimensional topology, Grothendieck Boolean algebra]
  \label{def:7}
  A \emph{zero-dimensional topology} on a Boolean
  algebra $B$ is a collection $\Top$ of partitions of $B$ which
  contains every finite partition, and satisfies:
  \begin{enumerate}[(i)]
  \item If $P \in \Top$, and $Q_b \in \J$ for each $b \in P$, then
    $P(Q) = \{ b \wedge c : b \in P, c \in Q_b\}^- \in \J$;
  \item If $P \in \J$ and $\alpha \colon P \rightarrow I$ is a
    surjective map, then each join $\bigvee \alpha^{-1}(i)$ exists and
    $\alpha_!(P) = \{\textstyle\bigvee \alpha^{-1}(i) : i \in I\} \in
    \Top$.
  \end{enumerate}
  A \emph{Grothendieck Boolean algebra} $\BJ$ is a Boolean algebra $B$
  with a zero-dimensional topology $\J$. A \emph{homomorphism} of
  Grothendieck Boolean algebras $f \colon \BJ \rightarrow C_\K$ is a
  Boolean homomorphism $f \colon B \rightarrow C$ such that $P \in \J$
  implies $f(P)^- \in \K$.
  If $\BJ$ is a Grothendieck Boolean algebra and $b \in B$, then we
  write $\Top_b$ for the set of partitions of $b$ characterised by:
    \begin{equation*}
      P \in \J_b \iff P \cup \{b'\} \in \J \iff P \subseteq Q \in \J \text{ and } \bigvee P = b\rlap{ .}
    \end{equation*}
\end{Defn}

Given a Grothendieck Boolean algebra, we can now define a variety of
(infinitary) algebras which allows for infinite conditioning over its
privileged partitions.
\begin{Defn}[$\BJ$-sets]
  \label{def:9}
  Let $\BJ$ be a non-degenerate Grothendieck Boolean algebra. A
  \emph{$\BJ$-set} is a $B$-set $X$ equipped with a function
  $P \colon X^P \rightarrow X$ for each infinite $P \in \Top$,
  satisfying:
  \begin{equation}\label{eq:5}
    P(\lambda b.\, x) = x\ \ \ \ \  P(\lambda b.\, b(x_b, y_b)) = P(\lambda b.\, x_b)\ \ \ \ \ 
    b(P(x), x_b) = x_b \text{ $\forall b \in P$.}
  \end{equation}
\end{Defn}
In this definition, and henceforth, we use the following notational
conventions:
\begin{Not}
  Given sets $I$ and $J$ we write $J^I$ for
  the set of functions from $I$ to $J$. If $u \in J^I$, we write $u_i$
  for the value of the function $u$ at $i \in I$; on the other hand,
  given a family of elements $(t_i \in J : i \in I)$, we write
  $\lambda i.\, t_i$ for the corresponding element of $J^I$. We may
  identify a natural number $n$ with the set
  $\{1, \dots, n\} \subseteq \mathbb{N}$.
\end{Not}

It turns out (\cite[Proposition~3.9]{Garner2023CartesianI}) that an
operation $P$ on a $B$-set $X$ satisfying the axioms~\eqref{eq:5} is
unique if it exists, and that any homomorphism of $B$-sets
$f \colon X \rightarrow Y$ will preserve it. Thus, the category of
$\BJ$-sets and homomorphisms is a \emph{full} subcategory of the
category of $B$-sets. Moreover, any non-degenerate Boolean algebra $B$
has a least zero-dimensional topology given by the collection of all
finite partitions of $B$, and in this case, $B_\J$-sets are just
$B$-sets; as such, we may without loss of generality work exclusively
with $\BJ$-sets in what follows.

As explained in~\cite{Garner2023CartesianI}, $\BJ$-set structure on a
set $X$ can also be described in terms of a family of equivalence
relations $\equiv_b$ which we read as
``$\mathsf{if}\ b\ \mathsf{then}\ x = y$'' or as ``$x$ and $y$ have
the same restriction to $b$''. The following result combines
Propositions~3.2,~3.10 and~3.11 and Lemma~3.12 of \emph{op.~cit.}

\begin{Prop}
  \label{prop:2}
  Let $\BJ$ be a non-degenerate Grothendieck Boolean algebra. Any
  $\BJ$-set structure on a set $X$ induces equivalence relations
  $\equiv_b$ (for $b \in B$) given by:
  \begin{equation*}
    x \equiv_b y \quad \quad \iff \quad \quad b(x,y) = y\rlap{ .}
  \end{equation*}
  These equivalence relations satisfy the following axioms:
  \begin{enumerate}[(i)]
  \item If $x \equiv_b y$ and $c \leqslant b$ then $x \equiv_c y$;
  \item $x \equiv_1 y$ if and only if $x = y$, and $x \equiv_0 y$ always;
  \item For any $P \in \J_b$, if $x \equiv_{c} y$ for all $c \in P$,
    then $x \equiv_{b} y$;
  \item For any $P \in \J$ and $x \in X^P$, there is $z \in X$ such
    that $z \equiv_b x_b$ for all $b \in P$.
  \end{enumerate}
  Any family of equivalence relations $(\mathord{\equiv_b} : b \in B)$
  satisfying (i)--(iv) arises in this way from a unique $\BJ$-set
  structure on $X$ whose operations are characterised by the fact that
  $b(x,y) \equiv_b x$ and $b(x,y) \equiv_{b'} y$ for all $b \in B$ and
  $x,y \in X$; and that $P(x) \equiv_b x_b$ for all $P \in \J$,
  $x \in X^P$ and $b \in P$. Such a $\BJ$-set structure is equally
  well determined by equivalence  relations  $\equiv_b$ satisfying
  (i)  and:
  \begin{enumerate}
    \looseness=-1
  \item[(ii)$'$] For any $P \in \J$ and $x \in X^P$, there is a unique
    $z \in X$ with $z \equiv_{b} x_b$ for all $b \in P$.
  \end{enumerate}
  Under the above correspondences, a function $X \rightarrow Y$
  between $\BJ$-sets is a homomorphism just when it preserves each
  $\equiv_b$.
\end{Prop}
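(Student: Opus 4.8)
The statement bundles four claims, and I would treat them in turn, the running theme being that one must pass freely between the operations $b(\thg,\thg)$ and $P$ and the relations $\equiv_b$. I will use repeatedly the two facts $b(x,y)\equiv_b x$ and $b(x,y)\equiv_{b'}y$, both immediate from~\eqref{eq:2}: the first since $b(b(x,y),x)=b(x,x)=x$, the second since $b'(b(x,y),y)=b(y,b(x,y))=b(y,y)=y$.

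First I would show that a $\BJ$-set induces relations satisfying (i)--(iv). That each $\equiv_b$ is an equivalence relation comes straight from~\eqref{eq:2}: reflexivity is $b(x,x)=x$, while the absorption law $b(b(x,y),z)=b(x,z)$ shows $b(y,z)=b(x,z)$ for all $z$ whenever $b(x,y)=y$, whence symmetry ($b(y,x)=b(x,x)=x$) and transitivity ($b(x,z)=b(y,z)=z$) both follow. Axiom (i) is read off from $(b\wedge c)(x,y)=b(c(x,y),y)$, and (ii) from $1(x,y)=x$ together with $b'(x,y)=b(y,x)$. The two genuinely infinitary axioms need~\eqref{eq:5}: for (iv) I take $z:=P(x)$ and invoke $b(P(x),x_b)=x_b$; for (iii), given $x\equiv_c y$ for every $c$ in some $P\in\J_b$, I set $w:=b(x,y)$, check by (i) and the two facts above that $w\equiv_c y$ on every block of $Q:=P\cup\{b'\}\in\J$, and then apply the middle identity of~\eqref{eq:5} to the families $\lambda c.\,w$ and $\lambda c.\,y$, both sides collapsing to constants and yielding $w=y$, that is, $x\equiv_b y$.

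Conversely, given relations satisfying (i)--(iv), I would define $b(x,y)$ by applying (iv) to the partition $\{b,b'\}$ with datum $x$ on $b$ and $y$ on $b'$, and $P(x)$ by applying (iv) to $P$; in both cases uniqueness of the gluing is forced, since two candidates would be $\equiv_c$-related on every block of a partition in $\J_1$, hence $\equiv_1$-related by (iii), hence equal by (ii). Checking~\eqref{eq:2} and~\eqref{eq:5} for these operations is then routine: each axiom asserts an equality of two elements, and one verifies that both sides lie in the same $\equiv_c$-class for every $c$ in a suitable partition and appeals to uniqueness once more. That the two passages are mutually inverse follows because the relation reconstructed as ``$b(x,y)=y$'' returns the original $\equiv_b$ (one implication each way from the defining property of $b(x,y)$), while the first part already exhibited the $\BJ$-set operations as having the characterising relational properties, so that they too are recovered exactly.

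For the repackaging I would note that (ii$'$) fuses the existence statement (iv) with a uniqueness statement supplied, in the presence of (i), by (ii) and (iii); deducing (i) and (ii$'$) from (i)--(iv) is thus immediate, and conversely (ii$'$) returns (iv) as its existence half and forces $\equiv_1$ to be equality via the singleton partition $\{1\}$. The delicate point, which I expect to be the main obstacle, is the boundary behaviour at $0$ and $1$: the totality of $\equiv_0$ is exactly the empty-partition instance of (iii), which is not directly visible to (ii$'$) quantified over partitions of $1$, while for $b\in\{0,1\}$ the two-block partition $\{b,b'\}$ degenerates to the singleton $\{1\}$, so one must check that the reconstruction of $b(\thg,\thg)$ and the recovery of (iii) stay consistent across these cases. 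The homomorphism clause, by contrast, is quick: if $f$ is a homomorphism then $f(y)=f(b(x,y))=b(f(x),f(y))$ gives $x\equiv_b y\Rightarrow f(x)\equiv_b f(y)$; and if $f$ preserves every $\equiv_b$, then $f(b(x,y))$ and $f(P(x))$ inherit the relational properties characterising $b(f(x),f(y))$ and $P(\lambda c.\,f(x_c))$, so equal them by uniqueness.
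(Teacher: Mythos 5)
The paper itself imports this proposition from Part~I of the series without printing a proof, so your argument must stand on its own; its overall architecture (induced relations, glue-defined reconstruction, uniqueness arguments, the homomorphism clause) is the right one, and your finitary computations---the equivalence-relation check, the two facts $b(x,y)\equiv_b x$ and $b(x,y)\equiv_{b'}y$, axioms (i) and (ii)---are correct. The first genuine gap is that (iii) and (iv) are \emph{not} ``genuinely infinitary axioms'': they quantify over all $P \in \J_b$ and $P \in \J$, and by Definition~\ref{def:7} the topology $\J$ contains every \emph{finite} partition, whereas the operations $P(\thg)$ and the identities~\eqref{eq:5} on which your proofs of (iii) and (iv) rely are part of the structure only for \emph{infinite} $P$ (Definition~\ref{def:9}). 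So your argument establishes (iii) and (iv) only for infinite partitions; when $\J$ is the least topology---i.e.\ for plain $B$-sets, which is where the finitary substance of the statement lives---it establishes nothing at all. What is missing is the finite-partition work: constructing glues over $\{b_1,\dots,b_n\}$ by iterated conditioning $z = b_1(x_{b_1}, b_2(x_{b_2},\dots))$, and proving the finite analogue of your collapsing step by induction on joins, using for instance that $(c \vee d)(x,y) = (c' \wedge d')(y,x) = c'(d'(y,x),x) = c'(y,x) = c(x,y) = y$ whenever $c(x,y) = d(x,y) = y$. This is routine but it is a necessary and substantial part of the proof.

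The second gap is the (i)$+$(ii)$'$ clause, which you flag as ``the main obstacle'' but never resolve---and the obstacle is sharper than your wording suggests. Totality of $\equiv_0$ genuinely does \emph{not} follow from (i) and (ii)$'$: take $B = \{0,1\}$ and let every $\equiv_b$ be equality on a two-element set $X$; then (i) and (ii)$'$ hold, yet this family is induced by no $\BJ$-set structure, since every $B$-set has $\equiv_0$ total. So the correspondence in this clause can only mean that the reconstruction \emph{discards} the given $\equiv_0$, and a complete proof has to say this explicitly. Similarly, recovering (iii) for $b \neq 0$ from (i)$+$(ii)$'$ (equivalently, verifying axioms such as $(b\wedge c)(x,y) = b(c(x,y),y)$ for the glue-defined operations, whose right-hand side must be shown $\equiv_{(b \wedge c)'}$-related to $y$ without (iii) available) requires a real argument you do not supply: one compares glues over common refinements such as $\{b \wedge c,\ b \wedge c',\ b'\}$, which do lie in $\J$, and invokes the uniqueness in (ii)$'$ twice. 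These refinement arguments are the actual content of the final clause, and their absence means that part of the proposition remains unproved in your proposal.
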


\begin{Rk}
  \label{rk:7}
  The conditions (i)--(iii) imply that, for all elements $x,y$ in a
  $\BJ$-set $X$, the set $\sheq x y = \{b \in B : x \equiv_b y\}$ is
  an ideal of the Boolean algebra $B$, and in fact a
  \emph{$\J$-closed} ideal---meaning that $b \in \sheq x y$ whenever
  $P \subseteq \sheq x y$ for some $P \in \J_b$.
\end{Rk}

\subsection{Matched pairs of algebras $\BM$ and $\BJM$}

We now describe the algebraic structure
which~\cite{Garner2023CartesianI} identifies as encoding precisely the
non-degenerate cartesian closed varieties. In the finitary case, this
structure was already considered in~\cite[\sec
4]{Jackson2009Semigroups}, in a related, though different, context.

\begin{Defn}[Matched pair of algebras]
  \label{def:16}
  A non-degenerate \emph{Grothendieck matched pair of algebras} $\BJM$
  comprises a non-degenerate Grothendieck Boolean algebra $\BJ$; a
  monoid $M$; $\BJ$-set structure on $M$, written as
  $b, m, n \mapsto b(m,n)$; and left $M$-set structure on $B$, written
  as $m, b \mapsto m^\ast b$. We require that $M$ acts on $\BJ$ by
  Grothendieck Boolean homomorphisms, and that  the  following axioms hold:
  \begin{itemize}
  \item $b(m,n)p = b(mp,np)$;
  \item $m(b(n,p)) = (m^\ast b)(mn,mp)$; and
  \item $b(m,n)^\ast(c) = b(m^\ast c, n^\ast c)$,
  \end{itemize}
  for all $m,n,p \in M$ and $b,c \in B$. Here, in the final axiom, we
  view $B$ itself is a $B$-set under the operation of conditioned
  disjunction $b(c,d) = (b \wedge c) \vee (b' \wedge d)$. These axioms
  are equivalently the conditions that:
  \begin{itemize}
  \item $m \equiv_b n  \implies mp \equiv_b np$;
  \item $n \equiv_b p  \implies mn \equiv_{m^\ast b} mp$;
  \item $m \equiv_b n \implies m^\ast c \equiv_b n^\ast c$, i.e., $b
    \wedge m^\ast c = b \wedge n^\ast c$.
  \end{itemize}
  When $\J$ is the topology of finite partitions, we can drop the $\J$
  and the modifier ``Grothendieck'' and speak simply of a
  \emph{matched pair of algebras} $\BM$.

  A \emph{homomorphism}
  $\dc \varphi f \colon \BJM \rightarrow \dc {B'_{\J'}} {M'}$ of Grothendieck
  matched pairs of algebras comprises a Grothendieck Boolean
  homomorphism $\varphi \colon \BJ \rightarrow B'_{\J'}$ and a monoid
  homomorphism $f \colon M \rightarrow M'$ such that, for all
  $m,n \in M$ and $b \in B$:
  \begin{equation}\label{eq:28}
    \varphi(b)(f(m),f(n)) = f(b(m,n)) \ \ \  \text{and} \ \ \ f(m)^\ast(\varphi(b)) = \varphi(m^\ast b)\rlap{ ,}
  \end{equation}
  or equivalently, such that
  \begin{equation}\label{eq:29}
    \ \ \quad m \equiv_b n \implies f(m) \equiv_{\varphi(b)} f(n) \qquad \text{and} \qquad f(m)^\ast(\varphi(b)) = \varphi(m^\ast b)\text{ .} 
  \end{equation}
\end{Defn}
The cartesian closed variety which corresponds to the Grothendieck
matched pair of algebras $\BJM$ can be described explicitly as the
variety of \emph{$\BJM$-sets}:

\begin{Defn}[Variety of $\BJM$-sets]
  \label{def:17}
  Let $\BJM$ be a non-degenerate matched pair of algebras. A
  \emph{$\BJM$-set} is a set $X$ endowed with $\BJ$-set structure and
  $M$-set structure, such that in addition we have:
  \begin{equation}\label{eq:30}
    b(m,n) \cdot x = b(m \cdot x, n \cdot x) \qquad \text{and} \qquad m \cdot b(x,y) = (m^\ast b)(m \cdot x, m \cdot y)
  \end{equation}
  for all $b\in B$, $m,n \in M$ and $x,y \in X$; or equivalently, such
  that:
  \begin{equation}
    \label{eq:53}
    m \equiv_b n \implies m \cdot x \equiv_b n \cdot x \qquad \text{and} \qquad
    x \equiv_b y \implies m \cdot x \equiv_{m^\ast b} m \cdot y\rlap{ .}
  \end{equation}
  A homomorphism of $\BJM$-sets is a function which preserves both
  $\BJ$-set and an $M$-set structure. We write $\BJM\text-\cat{Set}$
  for the variety of $\BJM$-sets. In the finitary case, we speak
  of ``$\BM$-sets'' and the (finitary) variety
  $\BM\text-\cat{Set}$.
\end{Defn}

The fact that $\BJM$-sets are indeed a cartesian closed variety was
verified in~\cite[Proposition~7.11]{Garner2023CartesianI}, which we
recall as:

\begin{Prop}
  \label{prop:3}
  For any non-degenerate Grothendieck Boolean matched pair $\BJM$, the
  category $\BJM\text-\cat{Set}$ is cartesian closed.
\end{Prop}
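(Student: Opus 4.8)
The plan is to build the cartesian closed structure by hand, as is typical for a variety: the terminal object and binary products are created by the forgetful functor to $\cat{Set}$, so the only real work lies in constructing exponentials. For the terminal object one takes the one-element $\BJM$-set, and for the product $X \times Y$ one takes the cartesian product of underlying sets with the pointwise structure, which under the description of Proposition~\ref{prop:2} amounts to $m \cdot (x,y) = (m\cdot x, m \cdot y)$ and $(x,y) \equiv_b (x',y')$ iff $x \equiv_b x'$ and $y \equiv_b y'$. Checking that this pointwise structure satisfies the axioms of Definitions~\ref{def:16} and~\ref{def:17} and has the correct universal property is routine, so I concentrate on showing that each functor $(\thg) \times Y$ has a right adjoint.

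The key preliminary observation is that $M$, with its given $\BJ$-set structure and the left-regular action $p \cdot m = pm$, is itself a $\BJM$-set: the two conditions of~\eqref{eq:30} for this structure are precisely the first two matched-pair axioms of Definition~\ref{def:16}. Moreover it is the \emph{free} $\BJM$-set on one generator, since for any $\BJM$-set $X$ and any $x \in X$ the map $m \mapsto m \cdot x$ is the unique homomorphism $M \to X$ sending $1 \mapsto x$: it is $M$-equivariant by construction and preserves each $\equiv_b$ by the first implication of~\eqref{eq:53}. Consequently, in any cartesian closed structure the underlying set of $Z^Y$ is forced to be $\BJM\text-\cat{Set}(M \times Y, Z)$, and this is what I would take as the definition. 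I would endow it with the $M$-action $(n \cdot \phi)(m,y) = \phi(mn, y)$---well-defined because right multiplication by $n$ is a $\BJM$-endomorphism of $M$, again by the first matched-pair axiom---and with the ``twisted'' relations
\[ \phi \equiv_b \psi \quad\Longleftrightarrow\quad \phi(m,y) \equiv_{m^\ast b} \psi(m,y) \text{ for all } m \in M,\ y \in Y\rlap{ .} \]

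I would then verify that these data make $Z^Y$ a $\BJM$-set. Conditions~(i)--(iii) of Proposition~\ref{prop:2} and the compatibility~\eqref{eq:53} reduce, pointwise in $(m,y)$, to the corresponding facts in $Z$ together with the identities $m^\ast 0 = 0$, $m^\ast 1 = 1$, $m^\ast(c \wedge d) = m^\ast c \wedge m^\ast d$ and $p^\ast(m^\ast b) = (pm)^\ast b$, and the third matched-pair axiom $d \wedge m^\ast c = d \wedge n^\ast c$ whenever $m \equiv_d n$. The crux---and the step I expect to be the main obstacle---is the gluing condition~(iv) (equivalently~(ii)$'$): given $P \in \J$ and a family $(\phi_c)_{c \in P}$ in $Z^Y$, one must produce $\phi \in Z^Y$ with $\phi \equiv_c \phi_c$ for all $c$. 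Working pointwise, for each $(m,y)$ the set $\{m^\ast c : c \in P\}^-$ is a partition of $1$ lying in $\J$ (as $m$ acts by a Grothendieck Boolean homomorphism), and $c \mapsto m^\ast c$ is injective on its nonzero part (if $m^\ast c = m^\ast c' \neq 0$ with $c \neq c'$ then $m^\ast c = m^\ast(c \wedge c') = 0$); so~(ii)$'$ in $Z$ yields a unique $\phi(m,y)$ with $\phi(m,y) \equiv_{m^\ast c} \phi_c(m,y)$ for all $c$. The difficulty is to check that the resulting $\phi$ is again a \emph{homomorphism} $M \times Y \to Z$. Its $M$-equivariance follows from the uniqueness clause in~(ii)$'$ together with the second implication of~\eqref{eq:53} in $Z$ and $p^\ast m^\ast = (pm)^\ast$; and its preservation of $\equiv_d$ follows by combining, for each $c$, the relations $\phi(m,y) \equiv_{d \wedge m^\ast c} \phi_c(m,y) \equiv_{d \wedge m^\ast c} \phi_c(m',y') \equiv_{d \wedge m^\ast c} \phi(m',y')$---using the third matched-pair axiom to replace $m'^\ast c$ by $m^\ast c$ after meeting with $d$---and then applying~(iii) in $Z$ to the partition $\{d \wedge m^\ast c : c \in P\}^- \in \J_d$. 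This last step uses the auxiliary fact that meeting a $\J$-partition of $1$ with a fixed $d$ yields a partition of $d$ in $\J_d$, which follows from the closure properties in Definition~\ref{def:7}; by the final clause of Proposition~\ref{prop:2}, preservation of all $\equiv_d$ together with $M$-equivariance is exactly what it means for $\phi$ to be a homomorphism.

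Finally I would establish the bijection $\BJM\text-\cat{Set}(X \times Y, Z) \cong \BJM\text-\cat{Set}(X, Z^Y)$. In one direction send $F \colon X \times Y \to Z$ to $\hat F$ with $\hat F(x)(m,y) = F(m \cdot x, y)$; here $\hat F(x)$ lies in $Z^Y$ as the composite of $F$ with $(m \mapsto m\cdot x) \times \mathrm{id}_Y$, and $\hat F \colon X \to Z^Y$ is itself a homomorphism, its $M$-equivariance and its preservation of the twisted $\equiv_b$ being precisely the two implications of~\eqref{eq:53}. In the other direction send $G \colon X \to Z^Y$ to $\check G$ with $\check G(x,y) = G(x)(1,y)$. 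A short computation using the $M$-equivariance of $G$ and of each $G(x)$ shows these assignments are mutually inverse and natural in $X$ and $Z$, so that $(\thg) \times Y$ has a right adjoint and $\BJM\text-\cat{Set}$ is cartesian closed.
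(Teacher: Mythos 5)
Your proposal is correct and takes essentially the same approach as the paper's own proof: the exponential $Z^Y$ is the set of $\BJM$-set homomorphisms $M \times Y \rightarrow Z$, with the same $M$-action $(n \cdot f)(m,y) = f(mn,y)$, the same twisted relations $f \equiv_b g \iff f(m,y) \equiv_{m^\ast b} g(m,y)$ for all $(m,y)$, and the same evaluation $f,y \mapsto f(1,y)$ and transpose $\bar f(x)(m,y) = f(mx,y)$. The only difference is one of detail: the paper merely exhibits this structure (deferring the verifications to Proposition~7.11 of the first paper in the series), whereas you also carry out the checks---freeness of $M$ on one generator, the gluing condition via the partitions $\{m^\ast c : c \in P\}^-$, and the adjunction equations---all of which are sound.
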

\begin{proof}
Given $\BJM$-sets $Y$ and $Z$, the function
space $Z^Y$ is the set of $\BJM$-set homomorphisms
$f \colon M \times Y \rightarrow Z$. We make this into an $M$-set
under the action
\begin{equation*}
  (m \cdot f)(n,y) = f(nm, y)\rlap{ ,}
\end{equation*}
and  into a $\BJ$-set via the equivalence relations:
\begin{equation*}
  f \equiv_b g \qquad \iff \qquad f(m,y) \equiv_{m^\ast b} g(m,y) \text{ for all $m,y \in M \times Y$.}
\end{equation*}
The evaluation homomorphism
$\mathrm{ev} \colon Z^Y \times Y \rightarrow Z$ is given by
$\mathrm{ev}(f, y) = f(1, y)$; and given a $\BJM$-set homomorphism
$f \colon X \times Y \rightarrow Z$, its transpose
$\bar f \colon X \rightarrow Z^Y$ is given by
$\bar f(x)(m, y) = f(mx, y)$.
\end{proof}  

Conversely, if we are presented with a cartesian closed variety $\C$,
then we can reconstruct the $\BJM$ for which
$\C \cong \BJM\text-\cat{Set}$ using
\cite[Proposition~7.12]{Garner2023CartesianI}, which we restate
(slightly less generally) here as:

\begin{Prop}
  \label{prop:44}
  Let $\C$ be a non-degenerate cartesian closed variety, and let
  $X \in \C$ be the free algebra on one generator. Then $\C \cong
  \BJM\text-\cat{Set}$, where
  \begin{enumerate}[(a),itemsep=0.25\baselineskip]
  \item The monoid $M$ is $\C(X,X)$, with unit $\mathrm{id}_X$ and
    product given by composition in \emph{diagrammatic} order, i.e.,
    $mn$ is $m$ followed by $n$;
  \item Writing $1$ for the one-element algebra, and
    $\iota_\top, \iota_\bot \colon 1 \rightarrow 1+1$ for the two
    coproduct injections, the Boolean algebra $B$ is $\C(X,1 + 1)$
    with operations
    \begin{gather*}
      \smash{1 = X \xrightarrow{!} 1 \xrightarrow{\iota_\top} 1+1 \qquad \qquad b' = X \xrightarrow{b} 1 + 1 \xrightarrow{\spn{\iota_2, \iota_1}} 1+1}\\
      \text{and} \quad 
      \smash{b \wedge c = X \xrightarrow{(b,c)} (1 + 1) \times (1 + 1) \xrightarrow{\wedge} 1+1}
    \end{gather*}
    where $\wedge \colon (1+1) \times (1+1) \rightarrow 1+1$ satisfies
    $\wedge \circ (\iota_i \times \iota_j) = \iota_{i \wedge j}$ for
    $i,j \in \{\top, \bot\}$;
  \item The zero-dimensional coverage $\J$ on $B$ has
    $P \subseteq B$ is in $\J$ just when there exists a map
    $f \colon X \rightarrow P \cdot 1$ with
    $\spn{\delta_{bc}}_{b \in B} \circ f = c$ for all $c \in P$, where
    here $\delta_{bc} \colon 1 \rightarrow 1+1$ is given by
    $\delta_{bc} = \iota_{\top}$ when $b = c$ and
    $\delta_{bc} = \iota_\bot$ otherwise;
  \item $M$ acts on $B$ via precomposition;
  \item $B$ acts on $M$ via:
    \begin{equation*}
      \smash{(b, m, n) \mapsto X \xrightarrow{(b, \mathrm{id})} (1+1) \times X \xrightarrow{\cong} X + X \xrightarrow{\spn{m,n}} X\rlap{ .}}
    \end{equation*}
  \end{enumerate}
  The isomorphism $\C \cong \BJM\text-\cat{Set}$ sends $Y \in \C$ to
  the set $\C(X,Y)$, made into a $\BJM$-set via the action of $M$ by
  precomposition, and the action of $B$ by
  \begin{equation*}
    \smash{(b, x, y) \mapsto X \xrightarrow{(b, \mathrm{id})} (1+1) \times X \xrightarrow{\cong} X + X \xrightarrow{\spn{x,y}} Y\rlap{ .}}
  \end{equation*}
\end{Prop}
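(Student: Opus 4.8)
The key observation is that, since $X$ is free on one generator, the representable functor $\C(X,\thg)$ is naturally isomorphic to the underlying-set functor $U \colon \C \to \Set$, which is monadic because $\C$ is a variety. The data in (a)--(e) are arranged precisely so that $Y \mapsto \C(X,Y)$ refines to a functor $\Phi \colon \C \to \BJM\text-\cat{Set}$ commuting with the two forgetful functors to $\Set$. Since the forgetful functor $\BJM\text-\cat{Set} \to \Set$ is also monadic, the plan is to proceed in three stages: first, that the operations (a)--(e) are well defined and satisfy the axioms of a Grothendieck matched pair; second, that $\Phi$ genuinely lands in $\BJM\text-\cat{Set}$ and is functorial; and third---the real work---that the induced comparison of monads on $\Set$ is invertible, so that $\Phi$ is an equivalence.

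The first two stages rest on one structural feature of cartesian closed categories: since $X \times (\thg)$ has a right adjoint it preserves coproducts, giving canonical isomorphisms $(1+1) \times X \cong X + X$ and $(1+1) \times (1+1) \cong 1+1+1+1$. The first is what makes sense of the conditioned disjunction in (e) and of the $B$-action in the final display; the second is what lets one define the meet $\wedge \colon (1+1)^2 \to 1+1$ in (b) by prescribing its value on each of the four coproduct injections. With these in hand, I would check that $B = \C(X,1+1)$ is a non-degenerate Boolean algebra---non-degeneracy ($0 \neq 1$) being exactly the statement that $\iota_\top, \iota_\bot \colon 1 \to 1+1$ are distinct, which holds since $\C$ is non-degenerate---and that $\J$ as defined in (c) is a zero-dimensional topology, its closure conditions (i) and (ii) of Definition~\ref{def:7} following from the corresponding stability properties of coproduct decompositions. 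The matched-pair axioms of Definition~\ref{def:16} are then a sequence of diagram chases using distributivity, the naturality of $(1+1) \times (\thg) \cong (\thg) + (\thg)$, and the fact that $\C(X,\thg)$ preserves products; here it is convenient to verify the $\BJ$-set conditions through the equivalence-relation description of Proposition~\ref{prop:2} rather than the raw operations. That $\Phi(Y) = \C(X,Y)$ is a genuine $\BJM$-set, and that morphisms of $\C$ induce $\BJM$-set homomorphisms, follow by the same formal manipulations.

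The crux is the third stage: showing $\Phi$ is an equivalence, equivalently that every free $\C$-algebra is a free $\BJM$-set. Since both forgetful functors to $\Set$ are monadic and $\Phi$ commutes with them, it suffices to identify the monads, i.e.\ to compute $U F(S) = \C\bigl(X, \coprod_{s \in S} X\bigr)$ and match it with the underlying set of the free $\BJM$-set on $S$. The heart of the matter is a normal-form theorem: given $f \colon X \to \coprod_{s \in S} X$, one post-composes with the canonical map $\coprod_S X \to \coprod_S 1 = S \cdot 1$ to obtain a map $X \to S \cdot 1$, which---by the characterisation in (c)---records a partition in $\J$ together with an $S$-labelling of its blocks; over each block $f$ then restricts to a map into a single summand $X$, i.e.\ to an element of $M = \C(X,X)$. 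This should exhibit $f$ as a unique $\J$-indexed patching of $M$-translates of the coproduct injections, which is exactly the description of the free $\BJM$-set on $S$.

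I expect this normal-form analysis to be the main obstacle. What must be established is that a map out of the free algebra $X$ into a coproduct really does split along a partition classified by $\J$, with one endomorphism per block, and that the relations among such presentations are precisely the $\BJM$-set relations; this splitting is \emph{not} a formal consequence of distributivity (coproducts in a variety are not in general extensive), and it is here that cartesian closedness does the genuine work. Everything preceding it is bookkeeping. This is, of course, the content of \cite[Proposition~7.12]{Garner2023CartesianI}, which the present statement merely restates.
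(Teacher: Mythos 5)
You should first be aware that the paper contains no proof of this proposition: it is introduced as a restatement, ``slightly less generally'', of \cite[Proposition~7.12]{Garner2023CartesianI}, and its justification in this paper is by citation alone. Measured against that, your proposal is missing nothing the paper supplies, and the scaffolding you erect is sound. The monadicity reduction is correct: both categories are monadic over $\Set$, a functor commuting with the two forgetful functors is induced by a morphism of the corresponding monads, and it is an isomorphism of categories exactly when that monad morphism is invertible; so everything reduces, as you say, to identifying $\C(X, \coprod_{s \in S} X)$ with the underlying set $M \times T_{\BJ}(S)$ of the free $\BJM$-set on $S$, naturally in $S$ and compatibly with units and multiplications. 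Your normal form is also the right target: a $\J$-partition read off from the composite $X \rightarrow S \cdot 1$, together with one element of $M$ per block, is precisely an element of $M \times T_{\BJ}(S)$, by the $\BJ$-set property of $M$ (a compatible family $(m_b)_{b \in P}$ glues to a unique $m \in M$). And you are right about where the difficulty lives: distributivity of $X \times (\thg)$ over coproducts makes the operations in (b) and (e) meaningful, and yields the non-degeneracy of $B$ and the formal matched-pair verifications, but it does not split an arbitrary $f \colon X \rightarrow \coprod_S X$ along a partition in $\J$; that splitting is the hyperaffine-unary decomposition which is the substance of the cited Proposition~7.12. So your write-up has the same status as the paper's own treatment: everything outside the citation is bookkeeping, and the one step you flag as the obstacle is exactly the step the citation carries. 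Only if this proposition were required to be self-contained would that flagged step count as a genuine gap.
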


Finally, by~\cite[Remark~7.9]{Garner2023CartesianI}, the \emph{free}
$\BJM$-set on a given set of generators $X$ can be described in terms
of the notion of $B$-valued \emph{distribution}:

\begin{Defn}
  \label{def:10}
  Let $\BJ$ be a non-degenerate Grothendieck Boolean algebra. A
  \emph{$\BJ$-valued distribution} on a set $I$ is a function
  $\omega \colon I \rightarrow B$ whose restriction to
  $\mathrm{supp}(\omega) = \{i \in I : \omega(i) \neq 0\}$ is an
  injection onto a partition in $\J$. We write $T_{\BJ}(I)$ for the
  set of $\BJ$-valued distributions on $I$.
\end{Defn}

Now the free $\BJM$-set on a set $X$ is given by the product of
$\BJM$-sets $M \times T_{\BJ} X$. Here, $M$ is seen as a $\BJM$-set
via its canonical structures of $\BJ$- and $M$-set, while $T_{\BJ}(X)$
is seen as a $\BJ$-set via
\begin{equation*}
  \omega \equiv_b \gamma \qquad \iff \qquad b \wedge \omega(x) = b \wedge \gamma(x) \text{ for all } x \in X
\end{equation*}
and as an $M$-set via 
$n \cdot (m, \omega) = (nm, n^\ast \circ \omega)$. The function
$\eta \colon X \rightarrow M \times T_{\BJ}(X)$ exhibiting
$M \times T_{\BJ}(X)$ as free on $X$ is given by
$x \mapsto (1, \pi_x)$.

\section{Matched pairs as Boolean restriction monoids}
\label{sec:links-non-comm}

In this section, we prove our first main result, identifying
(Grothendieck) matched pairs of algebras with (Grothendieck) Boolean
restriction monoids. We begin by recalling the notion of restriction
monoid. These appear in the semigroup literature under the name
``left weakly $E$-ample semigroups''~\cite{Fountain1999Munn}, with the below axiomatisation
first appearing in~\cite{Jackson2001An-invitation}; the name
``restriction monoid'' is now standard, with the nomenclature coming
from~\cite{Cockett2002Restriction}. See~\cite{Hollings2009From} for a
historical overview.

\renewcommand{\r}[1]{{#1}^{+}}
\newcommand{\q}[2]{\res {#1} {#2}}
\begin{Defn}[Restriction monoid]
  \label{def:20}
  A (left) \emph{restriction monoid} is a monoid $S$ endowed with a
  unary operation $s \mapsto \r s$ (called \emph{restriction}),
  satisfying the axioms
  \begin{equation*}
    \r s s = s \qquad \r{(\r st)} = \r s \r t \qquad \r s \r t = \r t \r s \qquad \text{and} \qquad  s \r t = \r{(st)}s \rlap{ .}
  \end{equation*}
  A \emph{homomorphism} of restriction monoids is a monoid
  homomorphism $\varphi$ which also preserves restriction, i.e.,
  $\varphi(s^+) = \varphi(s)^+$.
\end{Defn}

Some basic examples of a restriction monoid are the monoid of partial
endofunctions of a set $X$, or the partial continuous endofunctions of
a space $X$. In both cases, the restriction of a partial map
$f \colon X \rightharpoonup X$ is the idempotent partial function
$\r f \colon X \rightharpoonup X$ with $\r f(x) = x$ if $x$ is defined
and $\r f(x)$ undefined otherwise. In general, each element $s^+$ in a
restriction monoid $S$ is idempotent, and an element $b$ is of the
form $s^+$ if, and only if, $b^+ = b$; we write $E(S)$ for the set of
all $s^+$ and call them \emph{restriction idempotents}. On the other
hand, we call $s \in S$ \emph{total} if $\r s = 1$. Total maps are
easily seen to constitute a submonoid $\mathrm{Tot}(S)$ of $S$. 

There is a partial order $\leqslant$ on any restriction monoid $S$
defined by $s \leqslant t$ iff $\r s t = s$, expressing that $s$ is
the restriction of $t$ to a smaller domain of definition. When ordered
by $\leqslant$, the set of restriction idempotents $E(S)$ becomes a
meet-semilattice, with top element $1$ and binary meet
$b \wedge c = bc$. Of course, $b, c \in E(S)$ are \emph{disjoint} if
$bc = 0$; more generally, we say that $s,t \in S$ are \emph{disjoint}
if $\r s \r t = 0$.

The above axioms have various consequences; one of the more important
is the fact that $(s\,t^+)^+ = (st)^+$, which can be derived as follows.
\begin{equation*}
  (s\,t^+)^+ = ((st)^+s)^+ = (st)^+ s^+ = s^+ (st)^+ = (s^+st)^+ = (st)^+
\end{equation*}

\begin{Defn}[Boolean restriction monoid~\cite{Cockett2009Boolean}]
  \label{def:21}
  A \emph{Boolean restriction monoid} is a restriction monoid $S$ 
  in which:
  \begin{itemize}
  \item $(E(S), \leqslant)$ admits a negation $(\thg)'$ making it into a
    Boolean algebra;
  \item The least element $0$ of $E(S)$ is also a least element of $S$;
  \item Every pair of disjoint elements $s,t \in S$ has a join
    $s \vee t$ with respect to $\leqslant$;
  \item We have $s0 = 0$ and $s(t \vee u) = st \vee
    su$ for all $s, t,u \in S$ with $t,u$ disjoint.
  \end{itemize}
  As explained in~\cite[Proposition~2.14]{Cockett2011Differential},
  these conditions imply moreover that:
  \begin{itemize}
  \item $0 s = 0$ and $(s \vee t)u = su \vee tu$ for all
    $s,t,u \in S$ with $t,u$ disjoint;
  \item $0^+ = 0$ and $\r{(s \vee t)} = \r s \vee \r t$.
  \end{itemize}
  A homomorphism of Boolean restriction monoids is a restriction
  monoid homomorphism $S \rightarrow T$ which also preserves the least
  element $0$ and joins of disjoint elements; or equivalently,
  by~\cite[Lemma~2.10]{Cockett2021Generalising}, which restricts to a
  Boolean homomorphism $E(S) \rightarrow E(T)$.
\end{Defn}

Boolean restriction monoids are also the same thing as the~\emph{modal
  restriction semigroup with preferential union}
of~\cite{Jackson2011Modal}. We now wish to show, further, that
non-degenerate Boolean restriction monoids are coextensive with
non-degenerate matched pairs of algebras. In our arguments we will
freely use basic consequences of the restriction monoid axioms as
found, for example, in~\cite[Lemma~2.1]{Cockett2002Restriction}. In
one direction, we have:

\begin{Prop}
  \label{prop:25}
  Let $S$ be a non-degenerate Boolean restriction monoid (i.e.,
  $0\neq 1$ in $S$). The Boolean
  algebra $B = (E(S), \leqslant)$ and the monoid $M = \mathrm{Tot}(S)$
  constitute a non-degenerate matched pair of algebras
  $S^\downarrow = \BM$, where $B$ becomes an $M$-set by taking
  $m^\ast b = (mb)^+$, and $M$ becomes a $\BJ$-set by taking
  $m \equiv_b n \iff bm = bn$.
\end{Prop}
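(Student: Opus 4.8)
The plan is to verify in turn that the stipulated data indeed define a $B$-set structure on $M$ and an $M$-action on $B$, and that together they satisfy the matched-pair axioms of Definition~\ref{def:16}; non-degeneracy of $S^\downarrow = \BM$ is immediate, since $0 \neq 1$ in $S$ gives $0 \neq 1$ in $E(S)$, while $B = E(S)$ is a Boolean algebra and $M = \mathrm{Tot}(S)$ a monoid by the hypotheses on $S$. The two identities I expect to use throughout are $(mb)^+ m = mb$, obtained from the restriction axiom $s\,t^+ = (st)^+ s$ by taking $t = b \in E(S)$ (so $t^+ = b$), and $(bm)^+ = b$ for $b \in E(S)$ and $m$ total, obtained from $(s^+ t)^+ = s^+ t^+$ together with $m^+ = 1$.

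First I would treat the $B$-set structure on $M$. Each relation $m \equiv_b n \iff bm = bn$ is plainly an equivalence relation, and I would check the characterisation of Proposition~\ref{prop:2} via conditions (i) and (ii)$'$. Condition (i) is immediate, since $c \leqslant b$ gives $c = cb$ and hence $cm = cbm = cbn = cn$. For (ii)$'$, given a finite partition $P = \{b_1, \dots, b_k\}$ and elements $m_1, \dots, m_k \in M$, I would set $z = \bigvee_i b_i m_i$: the summands are pairwise disjoint because $(b_i m_i)^+ = b_i$, so the join exists and lies in $M$ as $z^+ = \bigvee_i b_i = 1$; left distributivity over disjoint joins gives $b_i z = b_i m_i$, so $z \equiv_{b_i} m_i$; and right distributivity gives uniqueness, since any $z'$ with $z' \equiv_{b_i} m_i$ for all $i$ satisfies $z' = (\bigvee_i b_i)z' = \bigvee_i b_i m_i = z$. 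The induced conditioned disjunction is then $b(m,n) = bm \vee b'n$.

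Next I would show $m^\ast b = (mb)^+$ is a left $M$-action by Boolean homomorphisms. That it is an action reduces to $1^\ast b = b$ and $(mn)^\ast b = m^\ast(n^\ast b)$, the latter being exactly the derived identity $(s\,t^+)^+ = (st)^+$ with $s = m$ and $t = nb$. For the homomorphism property: $m^\ast 1 = m^+ = 1$ and $m^\ast 0 = 0$ are clear; meets are preserved since $(mb)^+(mc)^+ = ((mb)^+ mc)^+ = (mbc)^+$ using $(s^+ t)^+ = s^+ t^+$ and $(mb)^+ m = mb$; and complements are preserved once I check that $(mb)^+$ and $(mb')^+$ are complementary in $E(S)$. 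Their join is $1$, because $m = m(b \vee b') = mb \vee mb'$ and restriction preserves disjoint joins, so $(mb)^+ \vee (mb')^+ = m^+ = 1$; and their meet is $0$, because $(mb)^+(mb')^+ = ((mb)^+ mb')^+ = ((mb)b')^+ = (m \cdot 0)^+ = 0$. Preservation of joins then follows by De Morgan.

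Finally I would dispatch the three matched-pair axioms in their $\equiv$-form. The first, $bm = bn \Rightarrow b(mp) = b(np)$, is immediate by associativity. The second, $bn = bp \Rightarrow (mb)^+(mn) = (mb)^+(mp)$, follows from $(mb)^+ m = mb$, which rewrites both sides as $m(bn)$ and $m(bp)$. The third requires $bm = bn \Rightarrow b \wedge (mc)^+ = b \wedge (nc)^+$; here $b(mc)^+ = (bmc)^+$ by $(s^+ t)^+ = s^+ t^+$, and $bmc = bnc$ because $bm = bn$, so $(bmc)^+ = (bnc)^+ = b(nc)^+$. I expect the one genuinely delicate step to be the verification that each $m^\ast$ is a Boolean homomorphism, and in particular the preservation of complements, since this is the only place where disjointness, joins and the restriction operation must all be made to interact; every other step is a direct application of the two highlighted identities together with the basic restriction laws.
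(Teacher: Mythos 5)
Your proof is correct and follows essentially the same route as the paper's: establish the $\BJ$-set structure on $M = \mathrm{Tot}(S)$ via the equivalence-relation characterisation of Proposition~\ref{prop:2}, show $m^\ast = (m\,\thg)^+$ is an action by Boolean homomorphisms (with the same computations for meets and complements), and verify the three matched-pair axioms by the same three one-line calculations. The only cosmetic difference is that you check conditions (i) and (ii)$'$ of Proposition~\ref{prop:2}, constructing $z = \bigvee_i b_i m_i$ over an arbitrary finite partition together with its uniqueness, whereas the paper checks (i)--(iv) using the binary join $b(m,n) = bm \vee b'n$; both are sanctioned by that proposition, so this changes nothing of substance.
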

\begin{proof}
  For axiom (i), if $c \leqslant b$, then $cb = c$
  and so $bm = bn$ implies $cm=cbm=cbn=cn$, i.e., $m \equiv_b n$
  implies $m \equiv_c n$. For axiom (ii), we have $m \equiv_{\top} n$
  just when $1m = 1n$, i.e., when $m = n$. For (iii), if
  $m \equiv_b n$ and $m \equiv_c n$, then
  $(b \vee c)m = bm \vee cm = bn \vee cn = (b \vee c)n$ so that
  $m \equiv_{b \vee c} n$. Finally, for (iv), if $m,n \in M$ and
  $b \in B$, then the element $bm \vee b'n$ is clearly total, and
  satisfies $b(bm \vee b'n) = bbm \vee bb'n = bm \vee 0 n = bm$ and
  similarly $b'(bm \vee b'n) = b'n$; whence $b(m,n) = bm \vee b'm$
  satisfies $b(m,n) \equiv_b m$ and $b(m,n) \equiv_{b'} n$ as desired.
  
  We next check that $m \mapsto m^\ast$ is an action by
  Boolean homomorphisms.
  Firstly:
  \begin{equation*}
    1^\ast b= \r{(1b)} = \r b = b \qquad \text{and} \qquad 
    m^\ast n^\ast b = \r{(m \r{(nb)})} =
    \r{(mnb)} = (mn)^\ast b\rlap{ .}
  \end{equation*}
  Next, we have $m^\ast(1) = \r{(m1)} = \r m =  1$
  since $m$ is assumed \emph{total}, and
  \begin{equation*}
    m^\ast(b \wedge c) = \r{(mbc)} = \r{(m \r b c)} = \r{(\r {(mb)} mc)} = \r{(mb)}\r{(mc)} = (m^\ast b) \wedge (m^\ast c)\rlap{ .}
  \end{equation*}
  Furthermore, since $m^\ast(b) \wedge m^\ast(b') = m^\ast(b \wedge
  b') = m^\ast(0) = \r{(m0)} = 0$ and
  \begin{equation*}
    m^\ast(b) \vee m^\ast(b') = \r{(mb)} \vee \r{(mb')} = \r{(m(b \vee b'))} = \r{m} = 1
  \end{equation*}
  we have $m^\ast b' = (m^\ast b)'$ so that $m^\ast$ is a Boolean
  homomorphism. It remains to check the three axioms for a matched
  pair of algebras. Axiom (i) is the trivial fact that $bm = bn$
  implies $bmp = bnp$. Axiom (ii) is the calculation
  \begin{equation*}
    bn = bp \quad \implies \quad (m^\ast b)mn = \r{(mb)}mn = mbn = mbp = \r{(mb)}mp = (m^\ast b)mp\rlap{ ,}
  \end{equation*}
  and, finally, axiom (iii) is:
  \begin{equation*}
    bm = bn \quad \implies \quad b \wedge m^\ast c = b\r{(mc)} = \r{(bmc)} = \r{(bnc)} = b \r{(nc)} = b \wedge n^\ast c\text{ .}\qedhere
  \end{equation*}
\end{proof}

In the converse direction, we have the following construction, which
also appears, in a more general context, in unpublished work of
Stokes~\cite{Stokes2020Restriction}.

\begin{Prop}
  \label{prop:26}
  For any non-degenerate matched pair of algebras $\BM$, there is a
  non-degenerate Boolean restriction monoid $S$ with
  $S^\downarrow \cong \BM$.
\end{Prop}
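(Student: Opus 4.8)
The plan is to build $S$ concretely as a set of ``restricted total maps'', inverting the construction of \cref{prop:25}. I would take $S$ to be the quotient of $B \times M$ by the equivalence relation under which $(b,m) \sim (b',m')$ iff $b = b'$ and $m \equiv_b m'$, writing $\dc b m$ for the class of $(b,m)$; the intuition is that $\dc b m$ denotes the total map $m$ cut down to the domain of definition $b$. On representatives I would define the unit as $\dc 1 1$, the product by
\[
  \dc b m \cdot \dc c n = \dc{b \wedge m^\ast c}{mn}\rlap{ ,}
\]
and the restriction by $\r{(\dc b m)} = \dc b 1$; the least element is $0 = \dc 0 1$ (noting that $\dc 0 m = \dc 0 1$ for every $m$, since $m \equiv_0 1$ always by \cref{prop:2}), and the join of a disjoint pair $\dc b m, \dc c n$ (i.e.\ one with $b \wedge c = 0$) is $\dc{b \vee c}{b(m,n)}$, using the conditioned-disjunction operation on the $B$-set $M$.

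The first task is well-definedness, and this is exactly where all three matched-pair axioms enter. For the product, equality of first components $b \wedge m^\ast c = b \wedge (m')^\ast c$ when $m \equiv_b m'$ is axiom (iii), while $mn \equiv_{b \wedge m^\ast c} m'n'$ follows by factoring through $m'n$: axiom (i) and \cref{prop:2}(i) give $mn \equiv_{b \wedge m^\ast c} m'n$, and axiom (ii) with \cref{prop:2}(i) give $m'n \equiv_{b \wedge m^\ast c} m'n'$. Well-definedness of the join uses that $\{b,c\}$ partitions $b \vee c$ together with the defining properties $b(m,n) \equiv_b m$ and $b(m,n) \equiv_{b'} n$. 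Next I would verify the restriction-monoid axioms of \cref{def:20}; these are short computations from the product formula, using only that $m^\ast$ preserves $\wedge$ and fixes $1$, and that $1^\ast c = c$. For the Boolean structure I would check that $b \mapsto \dc b 1$ is an order-isomorphism $B \cong (E(S), \leqslant)$ (surjective because every $\r s$ has this form, injective because $\dc b 1 = \dc c 1$ forces $b = c$), so that $E(S)$ inherits the Boolean algebra of $B$ with least element $\dc 0 1$; a one-line computation shows $\dc 0 1$ is least in all of $S$. That disjoint joins are least upper bounds follows from \cref{prop:2}(iii) applied to the partition $\{b,c\}$.

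The last clause of \cref{def:21}, namely $s0 = 0$ together with the distributive law $s(t \vee u) = st \vee su$ for disjoint $t,u$, is the step I expect to be most delicate. Here $s0 = 0$ needs only that $m^\ast$ preserves $0$; and for distributivity, with $s = \dc b m$, $t = \dc c n$, $u = \dc d p$ and $c \wedge d = 0$, one first checks that $st$ and $su$ are again disjoint (because $m^\ast$ preserves meets), after which the two sides have equal domain since $m^\ast$ preserves joins. Equality of the $M$-components then reduces, via matched-pair axiom (ii) in the form $m(c(n,p)) = (m^\ast c)(mn,mp)$, to the following fact about conditioned disjunction, which I would isolate as a short lemma: if $g_1 \wedge D = g_2 \wedge D$ and $g_1' \wedge D = g_2' \wedge D$, then $g_1(s,t) \equiv_D g_2(s,t)$. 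This in turn follows by splitting $D$ along the finite partition $\{D \wedge g_1, D \wedge g_1'\}$ and observing that both conditioned disjunctions are $\equiv s$ on the first block and $\equiv t$ on the second. The remaining consequences listed in \cref{def:21} then come for free by the cited results.

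It remains to produce the isomorphism $S^\downarrow \cong \BM$. I would take $\varphi \colon B \to E(S)$ to be $b \mapsto \dc b 1$ and $f \colon M \to \mathrm{Tot}(S)$ to be $m \mapsto \dc 1 m$; the latter is a monoid isomorphism since $\dc b m$ is total exactly when $b = 1$, and $\dc 1 m = \dc 1 {m'}$ iff $m \equiv_1 m'$ iff $m = m'$. A direct computation then shows that the actions reconstructed by \cref{prop:25} agree with the given ones: $\r{(\dc 1 m \cdot \dc b 1)} = \dc{m^\ast b}{1}$ recovers $m^\ast b$, while $\dc b 1 \cdot \dc 1 m = \dc b m$ shows that $\dc 1 m \equiv_b \dc 1 n$ in $S^\downarrow$ iff $m \equiv_b n$ in $M$. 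Non-degeneracy of $S$ is immediate, since $\dc 0 1 \neq \dc 1 1$ because $0 \neq 1$ in $B$. Thus $(\varphi, f)$ is the required isomorphism of matched pairs.
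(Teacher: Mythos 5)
Your proposal is correct and is essentially the paper's own construction: the same underlying set (pairs $\res{m}{b}$ with $m$ taken modulo $\equiv_b$), the same unit, product $\res{m}{b}\,\res{n}{c} = \res{mn}{b \wedge m^\ast c}$, restriction, least element and disjoint joins $\res{b(m,n)}{b \vee c}$, and the same isomorphism $b \mapsto \res{1}{b}$, $m \mapsto \res{m}{1}$. The only point of divergence is the left-distributivity check, where you compare the $M$-components directly via your conditioned-disjunction lemma, whereas the paper shortcuts this by noting that $st \vee su \leqslant s(t \vee u)$ holds automatically (left multiplication is monotone), so that only the restriction idempotents need comparing; both arguments are sound.
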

\begin{proof}
  We define $S = \{(b,m) : b \in B, m \in M / \mathord{\equiv_b}\}$,
  whose elements we write more suggestively as $\q m b$. We claim this
  is a Boolean restriction monoid on taking $1 = \q 1 1$,
  $\r{(\q m b)} = \q 1 b$ and
  $\q m b \q n c = \q {mn} {b \wedge m^\ast c}$. First, the
  multiplication $\q m b \q n c$ is well-defined, as if
  $m \equiv_b m'$ and $n \equiv_{c} n'$, then
  $m^\ast c \equiv_{b} (m')^\ast c$, i.e.,
  $b \wedge m^\ast c = b \wedge (m')^\ast c$; moreover, we have
  $mn \equiv_b mn'$ and $mn' \equiv_{m^\ast c} mn'$, whence
  $mn \equiv_{b \wedge m^\ast c} m'n'$. So
  $\q{mn}{b \wedge m^\ast c} = \q{m'n'}{b \wedge (m')^\ast c}$ as
  required.

  We now check the monoid axioms for $S$, noting the
  equality $\q 1 b \q n c = \q n {b \wedge c}$, which we will use
  repeatedly. For the unit axioms,
  $\q 1 1 \q m b = \q m {1 \wedge b} = \q m b$ and
  $\q m b \q 1 1 = \q m {b \wedge m^\ast 1} = \q m b$. For
  associativity,
  \begin{align*}
    (\q m b \q n c)\q p d &= \q {mn} {b \wedge m^\ast c} \q p d = \q {mnp} {b \wedge m^\ast c \wedge (mn)^\ast d}
    = \q {mnp} {b \wedge m^\ast c \wedge m^\ast n^\ast d} \\&= \q {mnp} {b \wedge m^\ast(c \wedge n\ast d)} 
    = \q m b \q {np} {c \wedge n^\ast d} = \q m b (\q n c \q p d)\rlap{ .}
  \end{align*}
  The following calculations now establish the four restriction monoid
  axioms:
  \begin{align*}
    \r{\q m b\!}\q m b &= \q 1 b \q m b = \q m {b \wedge b} = \q m b\\
    \smash{\r{\smash{(\r{\q m b\!}\!\q n c)}}} &= \r{(\q 1 b\! \q n
      c)} = \r{\q n {b \wedge c}\!} = \q 1 {b \wedge c} = \q 1 b \!\q 1 c =
    \r{\q n c\!}\r{\q m b\!}\\
    \r{\q m b\!}\r{\q n c\!} &= \q 1 b \q 1 c = \q 1 {b \wedge c} = \q 1 {c \wedge b} = \q 1 c \q 1 b =
    \r{\q n c\!}\r{\q m b\!}\\
    \q m b\r{\q n c\!} &= \q m b \q 1 c = \q m {b \wedge m^\ast c} =
    \q 1 {b \wedge m^\ast c}\q m b = \r{\q {mn} {b \wedge m^\ast c}\!} \q m b = \r{(\q m b\! \q n c)}\q m b\text{ .}
  \end{align*}
  So $S$ is a restriction monoid, wherein
  $E(S) = \{\res 1 b : b\in B\}$, and $\q m b \leqslant \q n c$ just
  when $b \leqslant c$ and $m \equiv_b n$. In particular, the map
  $B \rightarrow E(S)$ sending $b$ to $\res 1 b$ is an isomorphism of
  partially ordered sets, and so $E(S)$ is a Boolean algebra.
  Moreover, the least element $\q 1 0$ of $E(S)$ is a least element of
  $S$, as $1 \equiv_0 m$ is always true.

  We next show that any pair $s = \q m b$ and $t= \q n c \in S$ which
  are disjoint (i.e., $b \wedge c = 0$) have a join with respect to
  $\leqslant$. We claim $u = \res {b(m,n)}{b \vee c}$ is suitable.
  Indeed, as $b \leqslant b \vee c$ and $m \equiv_b b(m,n)$, we have
  $s \leqslant u$; while as $c \leqslant b \vee c$ and
  $n \equiv_{c} b(m,n)$ (since $c \leqslant b'$) we also have
  $t \leqslant u$. Now let $v = \res p d$ and suppose
  $s,t \leqslant v$. Then $b,c \leqslant d$ and so
  $b \vee c \leqslant d$; moreover, $m \equiv_b p$ and $n \equiv_c p$
  and so also $b(m,n) \equiv_b p$ and $b(m,n) \equiv_c p$. Thus
  $b(m,n) \equiv_{b \vee c} p$ and so $u \leqslant v$ as required.

  Finally, we show joins are stable under left multiplication. For
  the nullary case we have
  $\q m b \q 1 0 = \q m {b \wedge m^\ast 0} = \q m 0 = \q 1 0$.
  For binary joins, given $s = \q m b$ and $t = \q n c$ and $u = \q p
  d$ with $c, d$ disjoint, we necessarily have $st \vee su \leqslant s(t \vee u)$, since
  $st \leqslant s(t \vee u)$ and $su \leqslant s(t \vee u)$; so it
  suffices to show $(st \vee su)^+ = (s(t \vee u))^+$. But:
  \begin{align*}
    (st \vee su)^+ &= (\q {mn}{b \wedge m^\ast c} \vee \q {mp} {b \wedge m^\ast d})^+ = \q 1 {(b \wedge m^\ast c) \vee (b \wedge m^\ast d)}\\
    \text{while} \ \ (s(t \vee u))^+ &= (s(t \vee u)^+)^+ = (\q m b \q 1 {c \vee d})^+  = \q 1 {b \wedge m^\ast(c \vee d)}
  \end{align*}
  which are the same since $m^\ast$ is a Boolean homomorphism.

  This proves $S$ is a Boolean restriction monoid. Now we already saw
  that $b \mapsto \res 1 b$ is an isomorphism of Boolean algebras
  $B \rightarrow E(S)$, and the map $m \mapsto \res m 1$ is likewise a
  monoid isomorphism $M \rightarrow \mathrm{Tot}(S)$;
  To see that these maps constitute an isomorphism of
  matched pairs of algebras $\BM \rightarrow
  S^\downarrow$,  we must check the two axioms in~\eqref{eq:28}.
  On the one hand, for all
 $(b,m,n) \in B \times M \times M$, we have
  \begin{equation*}
    \res 1 b(\res m 1, \res n 1) = \res 1 b \res m 1 \vee \res 1 {b'} \res n 1 = \res m b \vee
    \res n {b'} = \res {b(m,n)} {b \vee b'} = \res{b(m,n)}1
  \end{equation*}
  which gives the first axiom in~\eqref{eq:28}. On the other hand, for
  all $(m,b) \in M \times B$, 
  $(\res m 1)^\ast (\res 1 b) = \r{(\res m 1 \res 1 b)} = \r{\res m {m^\ast b}} = \res 1 {m^\ast b}$
  giving the second axiom in~\eqref{eq:28}.   
\end{proof}

\looseness=-1 We now show that the two processes just described
underlie a functorial equivalence. Let us write $\mathrm{br}\cat{Mon}$
for the category of Boolean restriction monoids and their
homomorphisms.

\begin{Thm}
  \label{thm:6}
  The assignment $S \mapsto S^\downarrow$ of Proposition~\ref{prop:25}
  is the action on objects of an equivalence of categories
  $(\thg)^\downarrow \colon \mathrm{br}\cat{Mon} \rightarrow \dc {\BA}
  {\Mon}$ which on morphisms sends $\varphi \colon S \rightarrow T$ to
  $\dc {\res \varphi {E(S)}} {\res \varphi {\mathrm{Tot}(S)}} \colon
  S^\downarrow \rightarrow T^\downarrow$.
\end{Thm}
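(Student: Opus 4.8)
The plan is to verify that $(\thg)^\downarrow$ is a well-defined functor, and then to establish that it is essentially surjective and fully faithful. Essential surjectivity is immediate from Proposition~\ref{prop:26}. For well-definedness on morphisms, given a homomorphism $\varphi \colon S \to T$ of Boolean restriction monoids, I would first observe that $\varphi$ restricts to a map $E(S) \to E(T)$ (since $\varphi$ preserves restriction) and to a map $\mathrm{Tot}(S) \to \mathrm{Tot}(T)$ (since for total $m$ we get $\varphi(m)^+ = \varphi(\r m) = \varphi(1) = 1$). The two matched-pair homomorphism conditions~\eqref{eq:29} then follow at once: the first because $bm = bn$ forces $\varphi(b)\varphi(m) = \varphi(bm) = \varphi(bn) = \varphi(b)\varphi(n)$, and the second because $\res\varphi{\mathrm{Tot}(S)}(m)^\ast \varphi(b) = (\varphi(m)\varphi(b))^+ = \varphi(\r{(mb)}) = \varphi(m^\ast b)$, using that $\varphi$ preserves restriction and that $m^\ast b = \r{(mb)}$ in $S^\downarrow$. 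Preservation of identities and composites is then clear, as the action on morphisms is simply restriction of the underlying function.

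The engine for full faithfulness is the factorisation of an arbitrary element through its restriction idempotent and a total element. For $s \in S$ I would set $\hat s = s \vee (\r s)'$; since $s$ and the idempotent $(\r s)'$ are disjoint, this join exists, and one computes $\r{\hat s} = \r s \vee (\r s)' = 1$, so $\hat s$ is total, while $\r s\, \hat s = \r s\, s \vee \r s\,(\r s)' = s \vee 0 = s$ by left-distributivity over the disjoint join. Thus every $s$ factors as $s = \r s\, \hat s$ with $\r s \in E(S)$ and $\hat s \in \mathrm{Tot}(S)$. Faithfulness is then immediate: two homomorphisms that agree on $E(S)$ and on $\mathrm{Tot}(S)$ agree on every such product, hence everywhere.

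For fullness, I would take a matched-pair homomorphism $\dc \varphi f \colon S^\downarrow \to T^\downarrow$ and define $\Phi \colon S \to T$ by $\Phi(s) = \varphi(\r s)\, f(\hat s)$. The crux is the identity $\Phi(bm) = \varphi(b)\, f(m)$ for \emph{every} $b \in E(S)$ and total $m$, which exhibits $\Phi$ as independent of the chosen factorisation; I would prove it by checking that $bm \vee b' \equiv_b m$ in $S^\downarrow$ and then applying the first condition of~\eqref{eq:29} to get $f(bm \vee b') \equiv_{\varphi(b)} f(m)$, so that $\varphi(b)\, f(\widehat{bm}) = \varphi(b)\, f(m)$ as required. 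Granting this identity, preservation of $1$ and of restriction are routine, and $\Phi^\downarrow = \dc \varphi f$ by construction. Since $\Phi$ then restricts to the Boolean homomorphism $\varphi$ on $E(S)$, once it is shown to be a restriction-monoid homomorphism it is automatically a Boolean restriction monoid homomorphism by~\cite[Lemma~2.10]{Cockett2021Generalising}.

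I expect the main obstacle to be the fullness step, and within it the verification that $\Phi$ preserves multiplication. Here I would use the factorisation $st = \r s\,\hat s\,\r t\,\hat t = (\r s \wedge \hat s^\ast \r t)\,\hat s\hat t$, obtained from the restriction-monoid axiom $s\r t = \r{(st)}s$ of Definition~\ref{def:20} together with $\hat s^\ast \r t = (\hat s\,\r t)^+$; the identity above then gives $\Phi(st) = \varphi(\r s \wedge \hat s^\ast \r t)\, f(\hat s\hat t)$. On the other side, the analogous manipulation in $T$ yields $f(\hat s)\,\varphi(\r t) = \varphi(\hat s^\ast \r t)\, f(\hat s)$, which is exactly the second condition of~\eqref{eq:28}, and combining this with the fact that $\varphi$ and $f$ are respectively a Boolean and a monoid homomorphism shows $\Phi(s)\Phi(t) = \Phi(st)$. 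Everything else is bookkeeping with the restriction-monoid identities.
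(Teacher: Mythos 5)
Your proposal is correct and takes essentially the same approach as the paper's proof: the same idempotent--total factorisation $s = s^+\,\hat s$ (your $\hat s$ is the paper's $\check s$), the same candidate $\Phi(s) = \varphi(s^+)f(\hat s)$ for fullness, and the same combination of the fourth restriction axiom with the two conditions of~\eqref{eq:29} to verify multiplicativity. Your key identity $\Phi(bm) = \varphi(b)f(m)$ is just a repackaging of the paper's concluding step, which identifies $\varphi((st)^+)f(\check s\check t)$ with $\varphi((st)^+)f(\check{st})$ via the left implication of~\eqref{eq:36}, so the two arguments coincide in substance.
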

Recall here that a functor $F \colon \A \rightarrow \B$ is an
equivalence just when it is both full and faithful, and also
\emph{essentially surjective on objects}, meaning that each $B \in \B$ is
isomorphic to $F(A)$ for some $A \in \A$.
\begin{proof}
  Any homomorphism $\varphi \colon S \rightarrow T$ of Boolean
  restriction monoids, preserves restriction idempotents and total
  maps, and has restriction to $E(S) \rightarrow E(T)$ a Boolean
  homomorphism; moreover, these restrictions easily satisfy the two
  axioms of~\eqref{eq:29}. So $(\thg)^\downarrow$ is well-defined on
  morphisms, is clearly functorial, and is essentially surjective by
  Proposition~\ref{prop:26}. It remains to show it is full and
  faithful.
  Given a Boolean restriction monoid $S$ and $s \in S$, we write
  \begin{equation}\label{eq:35}
    s^- = (\r s)' \qquad \text{and} \qquad \check s = s \vee s^-\rlap{ .}
  \end{equation}
  Clearly $s$ and $s^-$ are disjoint, so that this join exists;
  moreover, $\check s$ is \emph{total} and so $s = \r s \check s$
  expresses $s$ as a product of a restriction idempotent and a total
  element.

  In particular, this implies fidelity of $(\thg)^\downarrow$: for if $\varphi,
  \psi \colon S \rightarrow S'$ act in the same way on
  restriction idempotents and total
  elements, then they act the same on each $s = s^+ \check s$ and so are equal.
  To show fullness, let $S$ and $S'$ be Boolean restriction monoids
  and $\dc{\varphi}f \colon \BM \rightarrow \dc {B'}{M'}$ a
  homomorphism of the associated matched pairs. By~\eqref{eq:29}, this
  is to say that for all $b \in B$ and $m,n \in M$:
  \begin{equation}
    \label{eq:36}
      bm = bn \implies  \varphi(b) f(m) = \varphi(b) f(n) \quad
 \text{and} \quad  \r{(f(m)\varphi(b))} = \varphi(\r{(mb)})\rlap{ .}
  \end{equation}
  We claim that
  $\psi \colon S \rightarrow S'$ defined by
  $\psi(s) = \varphi(\r s)f(\check s)$
  is a homomorphism of Boolean restriction monoids with
  $\psi^\downarrow = \dc \varphi f$. The latter
  claim follows easily since for $b \in E(S)$, we have
  $(b^+, \check b) = (b,1)$ and for $m \in \mathrm{Tot}(S)$ we have
  $(m^+, \check m) = (1,m)$. As for showing
  $\psi$ is indeed a homomorphism of Boolean restriction monoids,
  it is clear that it preserves $1$, and it preserves restriction
  since
  \begin{equation*}
    \r {\psi(s)} = \r{(\varphi(\r s)f(\check s))} =
    \r{\varphi(\r s)}\r{f(\check s)} = \varphi(\r s) = \psi(\r s)\rlap{ .}
  \end{equation*}
  To see that it preserves the monoid operation, we first calculate
  that:
  \begin{equation*}
    \r s \r{(\check s \r t)} = \r s \r{(\check s t)} = \r s \r{(s t)} \vee \r s \r{(s^- t)} = \r{(st)} \vee 0 = \r{(st)}
  \end{equation*}
  using that $(st^+)^+ = (st)^+$; definition of $\check s$ and
  distributivity of joins; and the fact that $\r{(st)} \leqslant \r s$
  and $\r{(s^- t)} \leqslant s^-$. Thus
  \begin{align*}
    \psi(s)\psi(t) &= \varphi(\r s)f(\check s)\varphi(\r t)f(\check t) & \text{definition}\\
    &= \smash{\varphi(\r s)\r{(f(\check s)\varphi(\r t))}f(\check s)f(\check t)} & \text{fourth restriction axiom}\\
    &= \smash{\varphi(\r s)\varphi(\r{(\check s \r t)})f(\check s)f(\check t)}& \text{right equality in \eqref{eq:36}}\\
    &= \smash{\varphi(\r s \r{(\check s \r t)})f(\check s\check t)} & \text{$\varphi, f$ homomorphisms}\\
    &= \smash{\varphi(\r{(st)})f(\check s\check t)} & \text{preceding calculation}\\
    &= \smash{\varphi(\r{(st)})f(\check{st})  = \psi(st)} & \text{left implication in \eqref{eq:36}}
  \end{align*}
  where to apply~\eqref{eq:36} in the last line, we use that
  $s \leqslant \check s$ and $t \leqslant \check t$, whence
  $st \leqslant \check s \check t$ and so
  $\r{(st)} \check s \check t = st = \r{(st)} \check{st}$. Finally,
  since $\psi$ restricts to $\varphi$ on $E(S)$, this restriction is
  a Boolean homomorphism, whence $\psi$ is a homomorphism of
  Boolean restriction monoids as required.
\end{proof}

As explained in the introduction, under the generalised
non-commutative Stone duality of~\cite{Cockett2021Generalising},
Boolean restriction monoids correspond to source-\'etale topological
categories with Stone space of objects. We do not recount the
correspondence in detail here, but simply apply it to describe
explicitly the topological category associated to a matched pair of
algebras.

\begin{Defn}[Classifying topological category]
  \label{def:25}
  Let $\BM$ be a matched pair of algebras. The \emph{classifying
    topological category} $\mathbb{C}_{\BM}$ has:
  \begin{itemize}
  \item \textbf{Space of objects} the Stone space of
    $B$, i.e., the set of all ultrafilters on $B$ under the
    topology with basic (cl)open sets
    $[b] = \{\U \in C_0 : b \in \U\}$ for $b \in B$;
  \item \textbf{Space of arrows} given by the set of all pairs
    $(\U \in C_0, m \in M \quot \equiv_\U$), where 
    $m \equiv_\U n$ just when $m \equiv_b n$ for some $b \in \U$,
    under the topology whose basic open sets
    are $\dc b m = \{(\U, m) \in C_1 : b \in \U\}$ for any
    $b \in B$ and $m \in M$;
  \item The \textbf{source} and \textbf{target} of $(\U, m)$ given
    by $\U$ and $m_! \U \defeq \{b \in B : m^\ast b \in \U\}$;
  \item The \textbf{identity} on $\U$ given by $(\U, 1) \colon \U
    \rightarrow \U$;
  \item The \textbf{composition} of $(\U, m) \colon \U \rightarrow m_!
    \U$ and $(m_! \U, n) \colon m_! \U \rightarrow n_! m_! \U = (mn)_!
    \U$ given by
    $(\U, mn) \colon \U \rightarrow (mn)_! \U$.
  \end{itemize}
When the action of $M$ on $B$ is faithful, we may under Stone duality
identify elements $m \in M$ with continuous endomorphisms of the Stone
space of $B$; whereupon the morphisms of $\mathbb{C}_{\BM}$ from $W$
to $W'$ can equally well be described as germs at $W$ of continuous
functions in $M$ which map $W$ to $W'$.
\end{Defn}

One might expect homomorphisms of matched pairs of algebras to
induce functors between the classifying topological categories, but
this is not so; rather, as in~\cite{Cockett2021Generalising}, they
induce \emph{cofunctors}~\cite{Higgins1993Duality,
  Aguiar1997Internal}, which are equally the \emph{algebraic
  morphisms} of~\cite{Buneci2005Morphisms}.
\begin{Defn}[Cofunctor]
  \label{def:35}
  A \emph{cofunctor} $F \colon \mathbb{C} \rightsquigarrow \mathbb{D}$
  between categories comprises a mapping on objects
  $\mathrm{ob}(\mathbb{D}) \rightarrow \mathrm{ob}(\mathbb{C})$,
  written $d \mapsto Fd$, and a mapping which associates to
  each $d \in \mathrm{ob}(\mathbb{D})$ and arrow
  $f \colon Fd \rightarrow c$ of $\mathbb{C}$ an object $f_* d$ of
  $\mathbb{D}$ with $F(f_* d) = c$ and an arrow
  $F_d(f) \colon d \rightarrow f_* d$, subject to the axioms that
  $F_d(1_{Fd}) = 1_d$ and $F_{f_* d}(g) \circ F_d(f) = F_d(gf)$
  (note that these imply in particular that $(1_{Fd})_* d = d$ and
  $g_* f_* d = (gf)_* d$). If $\mathbb{C}$ and $\mathbb{D}$
  are topological categories, then a \emph{topological cofunctor} is a
  cofunctor for which $d \mapsto Fd$ is continuous
  $\mathrm{ob}(\mathbb{D}) \rightarrow \mathrm{ob}(\mathbb{C})$ and
  $(d, f) \mapsto F_d(f)$ is continuous
  $\mathrm{mor}(\mathbb{C}) \times_{\mathrm{ob}(\mathbb{C})}
  \mathrm{ob}(\mathbb{D}) \rightarrow \mathrm{mor}(\mathbb{D})$.
\end{Defn}

\begin{Defn}[Classifying cofunctor]
  \label{def:36} Let
  $\dc \varphi f \colon \BM \rightarrow \dc {C} {N}$ be a homomorphism
  of matched pairs of algebras. The \emph{classifying cofunctor}
  $\mathbb{C}_{\BM} \rightarrow \mathbb{C}_{\dc C N}$ is given as
  follows:
  \begin{itemize}
  \item \textbf{On objects} it takes 
    $\U \in \mathbb{C}_{\dc C N}$ to 
    $\varphi^\ast(\U) = \{b \in B : \varphi(b) \in \U\} \in 
    \mathbb{C}_{\BM}$;
  \item \textbf{On maps} it takes an object
    $\U \in \mathbb{C}_{\dc C N}$ and map
    $(\varphi^\ast \U, m) \colon \varphi^\ast \U \rightarrow m_!
    \varphi^\ast \U$ in $\mathbb{C}_{\BM}$ to the object $f(m)_! \U$
    and map $(\U, f(m)) \colon \U \rightarrow f(m)_! \U$ in
    $\mathbb{C}_{\dc C N}$. Note this is well-defined by the left-hand
    axiom in~\eqref{eq:53}, and satisfies $\varphi^\ast f(m)_! \U =
    m_! \varphi^\ast \U$ by the right-hand one.
  \end{itemize}
\end{Defn}

Combining Theorem~\ref{thm:6} with
\cite[Theorem~5.17]{Cockett2021Generalising}, we thus see that the
operation which assigns to the variety of $\BM$-sets the topological
category $\mathbb{C}_{\BM}$ induces an equivalence between the
category of non-degenerate finitary cartesian closed varieties and the
category of non-empty ample topological groupoids and cofunctors.

We now describe the infinitary generalisations of the above.

\begin{Defn}[Grothendieck Boolean restriction monoid]
  \label{def:23}
  Let $S$ be a Boolean restriction monoid and $\J$ a zero-dimensional
  topology on $E(S)$. We say that $A \subseteq S$ is \emph{admissible}
  if its elements are pairwise-disjoint, and the set
  $A^+ = \{a^+ : a \in A\}^-$ is contained in a partition in $\J$. We
  say that $\J$ makes $S$ into a \emph{Grothendieck Boolean
    restriction monoid} $S_\J$ if any admissible subset
  $A \subseteq S$ admits a join with respect to $\leqslant$, and
  whenever $A \subseteq S$ is admissible and $s \in S$, 
  $sA = \{sa : a \in A\}$ is also admissible and
  $\bigvee sA = s (\bigvee A)$.
\end{Defn}

\begin{Prop}
  \label{prop:27}
  Let $S$ be a Boolean restriction monoid with $S^\downarrow= \BM$. A
  zero-dimensional topology $\J$ on $B$ makes $S$ a Grothendieck
  Boolean restriction monoid $S_\J$ just when it makes $\BM$ a
  Grothendieck matched pair of algebras~$\BJM$.
\end{Prop}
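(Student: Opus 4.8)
The plan is to prove both implications using the concrete description of $S$ furnished by the decomposition $s = \r s\,\check s$ of \eqref{eq:35}, under which $S$ is identified with the set of pairs $\q m b$ (with $b \in B = E(S)$ and $m \in M = \mathrm{Tot}(S)$ taken modulo $\equiv_b$), exactly as in the construction of Proposition~\ref{prop:26}. The central observation is a dictionary: a subset $A \subseteq S$ is admissible precisely when $A^+ = \{\r a : a \in A\}^-$ is a partition lying in some $Q \in \J$---equivalently $A^+ \in \J_{c}$ for $c = \bigvee A^+$---and, since admissibility forces the $\r a$ to be pairwise disjoint and hence distinct, such an $A$ is the same thing as a family of total parts $\check a$ indexed by the partition $A^+$. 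Under this dictionary the join $\bigvee A$ should correspond to the $\BJ$-gluing of the total parts, and I will show the two sides of the proposition are precisely the assertions that these gluings exist and behave well under the $M$-action.

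For the forward implication, assume $S_\J$ is a Grothendieck Boolean restriction monoid. I must produce the two pieces of extra structure in Definition~\ref{def:16}: that $M$ is a $\BJ$-set, and that each $m^\ast$ is a Grothendieck Boolean homomorphism. For the first I verify the characterisation (ii)$'$ of Proposition~\ref{prop:2} for the relations $\equiv_b$ on $M$: given $P \in \J$ and a family $(m_b)_{b \in P}$ of totals, the set $A = \{b\,m_b : b \in P\}$ is admissible since its restrictions are the $\r{(b\,m_b)} = b$, forming $P$; so $z = \bigvee A$ exists, computing $\r z = \bigvee_{b} \r{(b m_b)} = \bigvee P = 1$ shows $z$ is total, $b z = b m_b$ shows $z \equiv_b m_b$, and uniqueness follows because $\bigvee_{b \in P} b z = z$ for any total $z$. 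For the second piece, I feed the admissible set $P \subseteq E(S)$ through the join-stability hypothesis to learn that $mP = \{mb : b \in P\}$ is admissible; its restrictions are the $\r{(mb)} = m^\ast b$, so admissibility says $(m^\ast P)^-$ is contained in a $\J$-partition, while $\bigvee mP = m\bigvee P = m$ gives $\bigvee(m^\ast P)^- = \r m = 1$. A partition of $1$ contained in a $\J$-partition is equal to it (using that meets distribute over disjoint joins in any Boolean algebra), so $(m^\ast P)^- \in \J$, as required.

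For the reverse implication, assume $\BJM$ is a Grothendieck matched pair. Given admissible $A$, write $b_a = \r a$, $m_a = \check a$, $P = A^+ \in \J_c$ with $c = \bigvee P$; extending the family $(m_a)$ by $1$ across a $\J$-partition $Q \supseteq P$ and applying the gluing axiom (iv) of Proposition~\ref{prop:2} produces a total $z$ with $z \equiv_{b_a} m_a$ for all $a$, and I claim $\bigvee A = \q z c$. That this is an upper bound is immediate from the order formula $\q m b \leqslant \q n c \iff b \leqslant c,\ m \equiv_b n$, and minimality follows from axiom (iii) of Proposition~\ref{prop:2}, which upgrades $z \equiv_{b_a} p$ (for all $a$) to $z \equiv_c p$. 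For join-stability, given also $s = \q q e$, I compute $sa = \q{q m_a}{e \wedge q^\ast b_a}$; admissibility of $sA$ uses that $q^\ast$ is a Grothendieck homomorphism (so $(q^\ast Q)^- \in \J$) together with the refinement axiom (i) of Definition~\ref{def:7} to absorb the meet with $e$, and the identity $\bigvee sA = s\bigvee A$ then reduces, via the matched-pair axioms (i)--(ii), to the single fact that $q^\ast$ preserves the join $\bigvee P = c$.

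The main obstacle is exactly this last fact: that $m^\ast$ carries the infinite join $\bigvee P$ to $\bigvee_{p \in P} m^\ast p$, which does not follow from $m^\ast$ being merely a finitary Boolean homomorphism. I expect to derive it from the Grothendieck-homomorphism property by splitting a $\J$-partition $Q = P \sqcup (Q \setminus P)$, applying the coarsening axiom (ii) of Definition~\ref{def:7} to see that $\bigvee_{p} m^\ast p$ and $\bigvee_{q \in Q \setminus P} m^\ast q$ exist and are complementary within the partition $(m^\ast Q)^-$ of $1$, and then comparing them with the upper bounds $m^\ast c$ and $m^\ast c'$. This interplay between the topology axioms and the $M$-action is the crux on which both directions turn.
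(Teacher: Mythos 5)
Your proposal is correct and follows essentially the same route as the paper's proof: admissible subsets are identified with families of total parts indexed by a sub-partition of a member of $\J$, the join of such a family is constructed by $\BJ$-gluing the total parts and restricting to $\bigvee A^+$, and join-stability under left multiplication is reduced to the fact that each $m^\ast$ preserves admissible joins (a fact the paper invokes as "easy" and which your closing sketch establishes correctly). The only differences are presentational---you work in the pairs description of Proposition~\ref{prop:26}, justified via Theorem~\ref{thm:6}, rather than with the decomposition $s = s^+\check{s}$ inside the abstract $S$, and you make explicit a point the paper elides, namely why $\{m^\ast b : b \in P\}^-$ is actually a member of $\J$ rather than merely contained in one.
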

\begin{proof}
  Suppose first $S_\J$ is a Grothendieck Boolean restriction monoid. We
  begin by proving that $m^\ast \colon \BJ \rightarrow \BJ$ for each
  $m \in M$. Indeed, any $P \in \J$ is admissible as a subset of $S$,
  and so $mP$ is also admissible; which says that
  $\{(mb)^+ : b \in P\}^- = \{m^\ast b : b \in P\}^-$ is in $\J$,
  i.e., $m^\ast \colon \BJ \rightarrow \BJ$ as desired. We now prove
  that $M$ is a $\BJ$-set. Given $P \in \J$ and $x \in M^P$, note that
    the family $A = \{bx_b : b \in P\}$ is admissible; write $z$
    for its join, 
  and observe that for all $b \in P$ we have $bA^- = \{bx_b\}$ since
  $bc = 0$ whenever $b \neq c \in P$. Thus
  $bz = \bigvee bA = bx_b$, i.e., $z \equiv_b x_b$ for all $b \in P$. 
   Moreover, if $z' \in M$
  also satisfied $z' \equiv_b x_b$ for all $b \in P$, i.e., $bz' = bx_b$,
  then necessarily $bx_b \leqslant z'$ for all $b$, whence $z =
  \bigvee A
  \leqslant z'$; but since both $z$ and $z'$ are total, we must have
  $z = z'$ as required.

  Suppose conversely that $\BJM$ is a Grothendieck matched
  pair, and let $A \subseteq S$ be admissible. So the set $A^+ = \{a^+
  : a \in A\}^-$
  is contained in a partition $P \in \J$; thus, since $M$ is a $\BJ$-set, we
  can consider the unique element $z \in M$ such that
  \begin{equation*}
    z \equiv_{a^+} \check a \text{ for $a \in A$} \qquad \text{and} \qquad z \equiv_{b} 1 \text{ for $b \in P \setminus A$\ ,}
  \end{equation*}
  where, as in~\eqref{eq:35} we write $\check a = a \vee (a^+)'$.
  Since $A^+ \subseteq P$, the join $d = \bigvee_{a \in A} a^+$
  exists, and so we have the element $dz \in S$. Now
  $a^+dz = a^+z = a^+ \check a = a$ for all $a \in A$, i.e.,
  $a \leqslant dz$ for all $a \in A$; while if $a \leqslant u$ for all
  $a \in A$, i.e., $a^+ u = a$, then
  $a^+ z = a^+ \check a = a = a^+ u$, i.e., $z \equiv_{a^+} u$ for all
  $a \in A$, whence $z \equiv_d u$ by Proposition~\ref{prop:2}(iii),
  i.e., $dz = du = (dz)^+u$, i.e., $dz \leqslant u$. So $dz$ is the
  join of $A$.

  We now show stability of joins under left multiplication. Given
  $A \subseteq S$ admissible and $s \in S$, we may write $b = s^+$ and
  $m = \check s$ so that $s = bm$. It is easy to see that, if
  $A^+ \subseteq P \in \J$, then
  $\{(sa)^+ : a \in A\}^- \subseteq b \wedge m^\ast P \in \J$, so that
  $sA$ is also admissible. Now necessarily $\bigvee sA \leqslant s
  (\bigvee A)$, and so it suffices to show their restrictions are the
  same. But we have
  \begin{align*}
\textstyle    (s (\bigvee A))^+ &= \textstyle(s (\bigvee A)^+)^+ = (s (\bigvee A^+))^+ = b(m (\bigvee A^+))^+ = b \cdot m^\ast(\bigvee A^+)\\
    &= \textstyle b \cdot \bigvee m^\ast(A^+) = b \cdot \bigvee_{a \in A} (ma)^+  = \bigvee_{a} b(ma)^+  = \bigvee_{a} (sa)^+ = (\bigvee sA)^+
  \end{align*}
  as desired, where in going from the first to the second line we use
  the (easy) fact that any Grothendieck Boolean algebra homomorphism
  preserves admissible joins.
\end{proof}

A \emph{homomorphism} of Grothendieck Boolean restriction monoids
$\varphi \colon S_\J \rightarrow T_\K$ is a Boolean restriction
homomorphism which also preserves admissible families and joins of
admissible families. By a similar argument to before, $\varphi$ is a
Grothendieck Boolean restriction homomorphism if and only if it is a
restriction monoid homomorphism and its action on restriction
idempotents is a Grothendieck Boolean homomorphism
$E(S)_\J \rightarrow E(T)_\K$. Writing $\mathrm{gbr}\cat{Mon}$ for the
category of Grothendieck Boolean restriction monoids and their
homomorphisms, it follows that:
\begin{Thm}
  \label{thm:7}
  The equivalence of categories
  $(\thg)^\downarrow \colon \mathrm{br}\cat{Mon} \rightarrow \dc {\BA}
  {\Mon}$ extends to an equivalence of categories $(\thg)^\downarrow
  \colon \mathrm{gbr}\cat{Mon} \rightarrow \dc {\GBA}
  {\Mon}$ with action on objects $S_\J \mapsto \BJM$.
\end{Thm}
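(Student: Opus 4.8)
The plan is to reduce everything to the non-Grothendieck equivalence of Theorem~\ref{thm:6} together with Proposition~\ref{prop:27}, so that almost no new work is required beyond bookkeeping. On objects, Proposition~\ref{prop:27} already shows that a zero-dimensional topology $\J$ on $E(S) = B$ makes $S$ into a Grothendieck Boolean restriction monoid $S_\J$ exactly when it makes $\BM = S^\downarrow$ into a Grothendieck matched pair $\BJM$; thus the assignment $S_\J \mapsto \BJM$ is well-defined. On morphisms, I would invoke the characterization recorded just above the theorem: a homomorphism $\varphi \colon S_\J \rightarrow T_\K$ of Grothendieck Boolean restriction monoids is precisely a restriction monoid homomorphism $S \rightarrow T$ whose action on restriction idempotents is a Grothendieck Boolean homomorphism $E(S)_\J \rightarrow E(T)_\K$. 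Since, among ordinary matched pair homomorphisms, the Grothendieck ones are exactly those whose Boolean part satisfies $P \in \J$ implies $\varphi(P)^- \in \K$, this is precisely the condition needed for the image of $\varphi$ under $(\thg)^\downarrow$ to land in $\dc{\GBA}{\Mon}$. Hence $(\thg)^\downarrow$ restricts to a well-defined, visibly functorial map $\mathrm{gbr}\cat{Mon} \rightarrow \dc{\GBA}{\Mon}$ extending the one of Theorem~\ref{thm:6}.

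It then remains to verify the three conditions for an equivalence. Faithfulness is inherited directly from Theorem~\ref{thm:6}, since the hom-sets of $\mathrm{gbr}\cat{Mon}$ and $\dc{\GBA}{\Mon}$ are subsets of those of $\mathrm{br}\cat{Mon}$ and $\dc{\BA}{\Mon}$, on which $(\thg)^\downarrow$ is already faithful. For essential surjectivity, given a Grothendieck matched pair $\BJM$, essential surjectivity of the non-Grothendieck functor yields a Boolean restriction monoid $S$ with an isomorphism $S^\downarrow \cong \BM$; transporting $\J$ along the induced Boolean isomorphism $B \cong E(S)$ and applying Proposition~\ref{prop:27} produces a Grothendieck Boolean restriction monoid $S_\J$ with $S_\J^\downarrow \cong \BJM$.

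The one step requiring genuine care is fullness, and this is where I expect the only real (if mild) obstacle to lie. Given a homomorphism $\dc\varphi f \colon \BJM \rightarrow \dc{C_\K}{N}$ of Grothendieck matched pairs, I would first forget the topologies and apply fullness from Theorem~\ref{thm:6} to obtain the unique Boolean restriction monoid homomorphism $\psi$ with $\psi^\downarrow = \dc\varphi f$. The point to check is that this $\psi$ automatically respects the Grothendieck structure, i.e.\ preserves admissible families and their joins. Here I would again appeal to the morphism characterization: by construction in the proof of Theorem~\ref{thm:6}, $\psi$ restricts to $\varphi$ on $E(S)$, and $\varphi$ is a Grothendieck Boolean homomorphism by hypothesis, so $\psi$'s action on restriction idempotents is a Grothendieck Boolean homomorphism. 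Therefore $\psi$ is a Grothendieck Boolean restriction homomorphism lifting $\dc\varphi f$, which establishes fullness and completes the proof.
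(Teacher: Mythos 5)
Your proposal is correct and follows essentially the same route as the paper, which states Theorem~\ref{thm:7} without a separate proof precisely because it is intended to follow from Proposition~\ref{prop:27} (the object-level correspondence), the characterisation of Grothendieck Boolean restriction homomorphisms by their action on restriction idempotents, and the already-established equivalence of Theorem~\ref{thm:6}. Your spelled-out verification of faithfulness, essential surjectivity (transporting $\J$ along $B \cong E(S)$), and fullness (noting that any lift $\psi$ must restrict to $\varphi$ on $E(S)$, hence is automatically a Grothendieck homomorphism) is exactly the bookkeeping the paper leaves implicit.
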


In the infinitary case, the further correspondence with topological
categories breaks down; the reason is that a Grothendieck Boolean
restriction monoid need not satisfy a ``$\J$-closed ideal lemma''
analogous to the Boolean prime ideal lemma. Instead, in the spirit
of~\cite{Resende2006Lectures}, we get a correspondence with certain
\emph{localic categories}: namely, those whose object-space is
strongly zero-dimensional and whose source projection is a local
homeomorphism. Again, we give the construction, which we extract from
the presentation of~\cite[\sec 5.3]{Cockett2021Generalising}, but none
of the further details.

\begin{Defn}[Classifying localic category]
  \label{def:26}
  Let $\BJM$ be a Grothendieck matched pair of algebras. The
  \emph{classifying localic category} $\mathbb{C}$ has:
  \begin{itemize}
  \item \textbf{Locale of objects} $C_0$ given by 
    $\mathrm{Idl}_\J(B)$;
  \item \textbf{Locale of arrows} $C_1$ given by the set of $\BJ$-set
    homomorphisms $M \rightarrow \mathrm{Idl}_\J(B)$ ordered
    pointwise; here $\mathrm{Idl}_\J(B)$ is a $\BJ$-set via
    $I \equiv_b J$ when
    $I \cap \mathop{\downarrow} b = J \cap \mathop{\downarrow} b$;
  \item The \textbf{source} map $s \colon C_1
    \rightarrow C_0$ is given by $s^\ast(I) = \lambda m.\, I$;
  \item The \textbf{target} map $t \colon C_1
    \rightarrow C_0$ is given by $t^\ast(I) = \lambda m.\, m^\ast I$, where $m^\ast I$ is the
    $\J$-closed ideal generated by the elements $m^\ast b$ for $b \in I$;
  \item The \textbf{identity} map $i \colon C_0 \rightarrow C_1$ is
    given by $i^\ast(f) = f(1)$.
  \item The \textbf{composition} map $m \colon C_1 \times_{C_0} C_1
    \rightarrow C_1$ is given by $m^\ast(f) = \lambda m, n.\, f(mn)$.
    Here, we identify $C_1 \times_{C_0} C_1$ with the locale of all
    functions $f \colon M \times M \rightarrow \mathrm{Idl}_\J(B)$ for
    which each $f(\thg, n)$ is a $\BJ$-set homomorphism $M \rightarrow
    \mathrm{Idl}_\J(B)$ and each $f(m, \thg)$ is a $\BJ$-set homomorphism $M \rightarrow
    m^\ast\mathrm{Idl}_\J(B)$.
  \end{itemize}
\end{Defn}
Like before, we can also associate a localic cofunctor to each
homomorphism of Grothendieck matched pairs of algebras, and in this
way obtain an equivalence between the category of non-degenerate
cartesian closed varieties, and the category of non-empty ample
localic categories and cofunctors.

\section{When is a variety a topos?}
\label{sec:when-variety-topos}

In this section, we prove the second main result of the paper, which
gives a \emph{syntactic} characterisation of when a given cartesian
closed variety is a topos, and shows that this condition can be
re-expressed in terms of the \emph{minimality} of the classifying
topological or localic category.
Recall that a \emph{topos} is a cartesian closed category $\C$ which
has all pullbacks and a \emph{subobject classifier}: that is, an
object $\Omega$ endowed with a map
$\top \colon 1 \rightarrowtail \Omega$ with the property that, for any
monomorphism $m \colon Y \rightarrowtail X$ in $\C$ there is a unique
``classifying map'' $\chi_m \colon X \rightarrow \Omega$ for which the
following square is a pullback:
\begin{equation}\label{eq:37}
  \cd{
    {Y} \ar[r]^-{!} \ar[d]_{m} &
    {1} \ar[d]^{\top} \\
    {X} \ar[r]_-{\chi_m} &
    {\Omega}\rlap{ .}
  }
\end{equation}
As explained in the introduction, the question posed in the title of
this section was answered by Johnstone in~\cite{Johnstone1985When},
yielding a slightly delicate syntactic characterisation theorem
(Theorem~3.1 of \emph{op.~cit.}). Of course, a non-degenerate variety
which is a topos is in particular cartesian closed, and so, as we know
now, must be a variety of $\BJM$-sets. It is therefore natural to ask
whether Johnstone's conditions in~\cite{Johnstone1985When} can be
transformed in light of this knowledge into a condition on a
Grothendieck matched pair $\BJM$ which ensures that
$\BJM\text-\cat{Set}$ not just cartesian closed, but a topos. The
answer is yes: we will show $\BJM\text-\cat{Set}$ is a topos
precisely when:
\begin{equation}\label{eq:38}
  \text{For all } b \in B \setminus \{0\} \text{, there exists } m \in M \text{ with } m^\ast b = 1\rlap{ .}
\end{equation}

While it would be possible to prove this result directly, it is
scarcely any extra effort to do something more general.
In~\cite{Johnstone1990Collapsed}, Johnstone shows that any
non-degenerate cartesian closed variety $\V$ has an associated topos
$\E$, which is uniquely characterised by the fact that $\V$ can be
re-found as its two-valued collapse. This implies that a
non-degenerate cartesian closed variety $\V$ is itself a topos just
when its associated topos $\E$ is two-valued, i.e., equal to its
two-valued collapse. Here, the notion of ``two-valued collapse'' is
given by:
\begin{Defn}[Two-valued collapse]
  \label{def:27}
  Let $\E$ be a cartesian closed category. The \emph{two-valued
    collapse} $\E_{\mathrm{tv}}$ is the full subcategory of $\E$ whose
  objects $X$ are either \emph{well-supported}---meaning that the unique
  map $X \rightarrow 1$ is epimorphic---or initial.
\end{Defn}

For a given cartesian closed variety $\V$, finding the topos which
collapses to it is done by Theorem~6.1 of \emph{op.~cit.}, which is
quite delicate; but armed with the knowledge that
$\V \cong \BJM\text-\cat{Set}$, we are able to give a simpler
construction of the associated topos\footnote{We should clarify that
  we do not recover the full force of
  \cite[Theorem~6.1]{Johnstone1990Collapsed}, which can reconstruct a
  topos from a more general cartesian closed category than a cartesian
  closed variety.}, from which the characterisation~\eqref{eq:38}
above will follow straightforwardly.

\begin{Defn}[Category of $\BJM$-sheaves]
  \label{def:28}
  Let $\BJM$ be a Grothendieck matched pair of algebras. A
  \emph{$\BJM$-presheaf} $X$ comprises sets $X(b)$ for all
  $b \in B \setminus \{0\}$, together with:
  \begin{itemize}
  \item For all $c \in B$ and $m \in M$ with $m^\ast c \neq 0$, a
    function $m \cdot (\thg) \colon X(c) \rightarrow X(m^\ast c)$;
  \item For all $b,c \in B$ with $b \wedge c \neq 0$, a
    function $b \wedge (\thg) \colon X(c) \rightarrow X(b \wedge c)$;
  \end{itemize}
  such that for all $x \in X(c)$ and all
  suitable $a,b \in B$ and $m,n \in M$ we have:
  \begin{enumerate}[(i)]
  \item $c \wedge x = x$ and $(a \wedge b) \wedge x = a \wedge (b
    \wedge x)$;
  \item $1 \cdot x = x$ and $(mn) \cdot x = m \cdot (n \cdot x)$;
  \item $m \cdot (b \wedge x) = (m^\ast b) \wedge (m \cdot x)$; and
  \item If $m \equiv_b n$ then
    $b \wedge (m \cdot x) = b \wedge (n \cdot x)$.
  \end{enumerate}
  
  Such a presheaf is a \emph{$\BJM$-sheaf} if for each $P \in \J_c$
  and family $x \in \prod_{b \in P}X(b)$, there is given an
  element $P(x) \in X(c)$, and these elements satisfy:
  \begin{equation}
    \label{eq:51}
    b \wedge P(x) = x_b \text{ for all } x \in \textstyle\prod_{b \in P}X(b) \ \text{ and } \  P(\lambda b.\, b \wedge x) = x \text{ for all
    } x \in X(c)\text{ .}
  \end{equation}

  A \emph{homomorphism} of $\BJM$-presheaves is a family of functions
  ${f_c \colon X(c) \rightarrow Y(c)}$ that preserve each
  $m \cdot (\thg)$ and $b \wedge (\thg)$; between sheaves, such an $f$
  will necessarily also preserve each $P(\thg)$. We write
  $\BJM\text-\cat{Shv}$ for the category of $\BJM$-sheaves.
\end{Defn}

\begin{Prop}
  \label{prop:43}
  For any Grothendieck matched pair of algebras $\BJM$, the category
  $\BJM\text-\cat{Shv}$ is both a many-sorted variety and a
  (Grothendieck) topos.
\end{Prop}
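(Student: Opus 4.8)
The plan is to treat the two assertions separately, the first being essentially bookkeeping and the second the substantive point. For the \emph{many-sorted variety} claim, I would observe that a $\BJM$-sheaf is precisely a many-sorted algebra for an (infinitary) equational theory with set of sorts $B \setminus \{0\}$. Its operations are the unary $b \wedge (\thg) \colon X(c) \rightarrow X(b \wedge c)$ (one for each $b,c$ with $b \wedge c \neq 0$), the unary $m \cdot (\thg) \colon X(c) \rightarrow X(m^\ast c)$ (one for each $m,c$ with $m^\ast c \neq 0$), and the $P$-ary operations $P(\thg) \colon \prod_{b \in P} X(b) \rightarrow X(c)$ (one for each $c$ and each $P \in \J_c$). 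Conditions (i)--(iii) of Definition~\ref{def:28} and the sheaf equations~\eqref{eq:51} are equations in these operations, while (iv) is, for each fixed triple $(m,n,b)$ with $m \equiv_b n$, the single equation $b \wedge (m \cdot x) = b \wedge (n \cdot x)$; as $\equiv_b$ belongs to the \emph{fixed} datum $\BJM$, this is just a ($B$-indexed) family of equations. Since the signature is a set and all axioms are equational, the models constitute a many-sorted variety, monadic over $\Set^{B\setminus\{0\}}$; and homomorphisms as in Definition~\ref{def:28} coincide with homomorphisms of these algebras, the stated automatic preservation of each $P(\thg)$ playing exactly the role already seen in Proposition~\ref{prop:2}.

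For the \emph{topos} claim I would exhibit $\BJM\text-\cat{Shv}$ as the category of sheaves on a small site $(\mathbb{D}, \J')$ built directly from $\BJM$, whence it is a Grothendieck topos. Define $\mathbb{D}$ to have as objects the nonzero elements of $B$, and as morphisms $d \rightarrow c$ the $\equiv_d$-classes of those $m \in M$ with $d \leqslant m^\ast c$; the composite of $[n] \colon e \rightarrow d$ followed by $[m] \colon d \rightarrow c$ is $[nm] \colon e \rightarrow c$ (well-defined since $e \leqslant n^\ast d \leqslant n^\ast m^\ast c = (nm)^\ast c$), with identity $[1]$ on each $c$. The third matched-pair axiom, $m \equiv_b n \Rightarrow b \wedge m^\ast c = b \wedge n^\ast c$ (Definition~\ref{def:16}), is exactly what makes this composition well-defined on classes and independent of representatives. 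I would then check that a presheaf $\mathbb{D}^{\mathrm{op}} \rightarrow \Set$ is the same datum as a $\BJM$-presheaf: the inclusions $[1] \colon b \hookrightarrow c$ act as $b \wedge (\thg)$, the morphisms $[m] \colon m^\ast c \rightarrow c$ act as $m \cdot (\thg)$, every morphism factors as an $m$-action followed by an inclusion, and axioms (i), (ii), (iv) encode functoriality together with the identification~(iv), while axiom~(iii) is precisely the commutation law relating the two families of generators. Thus the category of $\BJM$-presheaves is equivalent to $[\mathbb{D}^{\mathrm{op}}, \Set]$.

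Finally I would equip $\mathbb{D}$ with the topology $\J'$ in which a sieve on $c$ covers iff it contains the inclusions $\{[1] \colon b \hookrightarrow c : b \in P\}$ for some $P \in \J_c$, and identify the $\J'$-sheaves with the $\BJM$-sheaves. The main obstacle is verifying that these partition-sieves genuinely generate a Grothendieck topology: the maximal sieve comes from $\{c\} \in \J_c$; transitivity is the closure of $\J$ under the refinement operation $P(Q)$ of Definition~\ref{def:7}(i); and stability under pullback along $[m] \colon d \rightarrow c$ holds because $m^\ast$ is a Boolean homomorphism carrying $P \in \J_c$ to the partition $\{d \wedge m^\ast b : b \in P\}^- \in \J_d$. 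With the topology in hand the sheaf condition becomes transparent: for a \emph{partition} $P$ the pullbacks $b \wedge b'$ (for $b \neq b' \in P$) all equal $0$ and so lie outside $\mathbb{D}$, leaving every matching family unconstrained, so a matching family for the covering sieve is simply an element of $\prod_{b \in P} X(b)$; the two equations of~\eqref{eq:51} then assert exactly that $X(c) \rightarrow \prod_{b \in P} X(b)$ is a bijection, i.e.\ that $X$ is separated and that amalgamations exist uniquely. Hence $\BJM\text-\cat{Shv} \simeq \mathrm{Sh}(\mathbb{D}, \J')$ is a Grothendieck topos, completing the proof.
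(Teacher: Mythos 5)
Your proof is correct and takes essentially the same route as the paper: the paper's site is exactly your $\mathbb{D}$ (objects $B \setminus \{0\}$, morphisms the $\equiv_b$-classes of $m \in M$ with $b \leqslant m^\ast c$, covers the partition families from $\J_c$), with matching families trivialised by disjointness of partitions in the same way, the only presentational differences being that the paper states the covers as a coverage rather than a generated sieve topology, and renders axiom (iv) equational by rewriting it as the unconditional equation $b \wedge (m \cdot x) = b \wedge (b(m,n) \cdot x)$ where you instead (also validly) observe that the side condition $m \equiv_b n$ quantifies over the fixed data of $\BJM$ rather than over the model. One hairline slip worth fixing: the third matched-pair axiom only makes the hom-set condition $b \leqslant m^\ast c$ independent of the representative $m$, whereas well-definedness of composition on $\equiv$-classes uses the first two axioms, via $m \equiv_b m' \implies mn \equiv_b m'n$ and $n \equiv_c n' \implies m'n \equiv_{(m')^\ast c} m'n'$ together with $b \leqslant (m')^\ast c$, exactly as checked in the paper's proof.
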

\begin{proof}
  The only axiom for a $\BJM$-sheaf which is not obviously equational
  is the condition that if $m \equiv_b n$ then
  $b \wedge (m \cdot x) = b \wedge (n \cdot x)$; however, this can be
  re-expressed as the condition that
  $b \wedge (m \cdot x) = b \wedge (b(m,n) \cdot x)$ for all
  $m,n \in M$, $b \in B$ and $x \in X(c)$. Thus $\BJM\text-\cat{Shv}$
  is a many-sorted variety. To see that it is a Grothendieck topos, it
  suffices to exhibit it as equivalent to the category of sheaves on a
  suitable \emph{site}~\cite[\sec C2]{Johnstone2002Sketches2}. So
  consider the category $\C$ in which:
  \begin{itemize}
  \item Objects are elements of $B \setminus \{0\}$;
  \item Morphisms $b \rightarrow c$ are elements
    $m \in M \quot \equiv_b$ for which $b \leqslant m^\ast c$; 
    this is well-posed, as if $m \equiv_b n$ then
    $b \wedge m^\ast c = b \wedge n^\ast c$, so
    $b \leqslant m^\ast c$ if and only if $b \leqslant n^\ast c$;
  \item The identity on $b$ is $1 \colon b \rightarrow b$;
  \item The composition of $m \colon b \rightarrow c$ and
    $n \colon c \rightarrow d$ is $mn \colon b \rightarrow d$. This is
    well-posed, as if $m \equiv_{b} m'$ and $n \equiv_c n'$ then
    $mn \equiv_b m'n \equiv_b m'n'$, using 
    $b \leqslant (m')^\ast(c)$ for the second equality; and clearly
    $b \leqslant m^\ast c$ and $c \leqslant n^\ast d$ imply
    $b \leqslant (mn)^\ast d$.
  \end{itemize}
  Given a family of sets $X(b)$, the $\BJM$-presheaf structures
  thereon are now in bijection with the $\C$-presheaf structures;
  indeed, from the former we obtain the latter by defining
  $X(m \colon b \rightarrow c)$ as $b \wedge (m \cdot \thg)$, while
  from the latter we obtain the former by defining $m \cdot (\thg)$
  and $c \wedge (\thg)$ as $X(m \colon m^\ast b \rightarrow b)$ and
  $X(1 \colon c \wedge b \rightarrow b)$. Under this correspondence,
  axioms (i)--(iii) correspond to functoriality in $\C$, while axiom
  (iv) corresponds to the equivalence relation on the homs of $\C$.

  Now consider the Grothendieck coverage $J$ on the category $\C$ for
  which the covers of $c \in \C$ are the families
  $(1 \colon b \rightarrow c)_{b \in P}$ for each $P \in \J_c$. This
  is indeed a coverage: for given the above cover of $c$ and a map
  $m \colon d \rightarrow c$ in $\C$, since $m^\ast$ is a Grothendieck
  Boolean algebra homomorphism we have $m^\ast P \in \J_{m^\ast c}$
  and so by axiom (i) for a Grothendieck Boolean algebra that
  $d \wedge m^\ast P = \{d \wedge m^\ast b : b \in P\}^-$ is in
  $\J_{d}$; and for each $1 \colon d \wedge m^\ast b \rightarrow d$ in
  the corresponding cover, the composite
  $m \colon d \wedge m^\ast b \rightarrow c$ factors through
  $1 \colon b \rightarrow c$ via
  $m \colon d \wedge m^\ast b \rightarrow b$.

  Now given a $\C$-presheaf $X$, a matching family for the cover
  $(1 \colon b \rightarrow c)_{b \in P}$ is, by disjointness of $P$,
  simply a family $x \in \prod_{b \in P} X(b)$, and the sheaf axiom
  for this cover asserts that there is a unique $P(x) \in X(c)$ whose
  image under $X(1 \colon b \rightarrow c)$ is $x_b$ for all
  $b \in P$. But in terms of the corresponding $\BJM$-presheaf, this
  asserts exactly the existence of elements $P(x)$
  satisfying~\eqref{eq:51}. So $(\C, J)$-sheaves correspond
  bijectively with $\BJM$-sheaves; since clearly the homomorphisms
  match up under this correspondence,
  $\BJM\text-\cat{Shv} \cong \cat{Sh}(\C, J)$ is a Grothendieck topos.
\end{proof}

We will now show that, if $\BJM$ is a Grothendieck matched pair of
algebras, then the topos $\BJM\text-\cat{Shv}$ has
$\BJM\text-\cat{Set}$ as its two-valued collapse. The key point is how
we embed $\BJM\text-\cat{Set}$ into $\BJM\text-\cat{Shv}$. To motivate
this, note that what a $\BJM$-set lacks relative to a $\BJM$-sheaf are
the actions $b \wedge (\thg)$, so it makes sense to adjoin these
``formally''. To this end, if $X$ is a $\BJM$-set, let us suggestively
write elements of the quotient $X \quot \equiv_b$ as $b \wedge x$; so
$b \wedge x = b \wedge y$ just when $x \equiv_b y$. Using this
notation, we now have:
\begin{Prop}
  \label{prop:28}
  For any $\BJM$-set $X$, there is a $\BJM$-sheaf $B \wedge X$ with
  \begin{equation*}
    (B \wedge X)(b)\, = \,\mathord{X \quot \mathord{ \equiv_b}}\, =\, \{ b \wedge x : x \in X\}\rlap{ ,}
  \end{equation*}
  and operations $b \wedge (\thg) \colon X(c) \rightarrow X(b \wedge c)$
  and $m \cdot (\thg) \colon X(c) \rightarrow X(m^\ast c)$ given by
  \begin{equation*}
    b \wedge (c \wedge x) = (b \wedge c) \wedge x \qquad  \text{and} \qquad
    m \cdot (c \wedge x) = (m^\ast c) \wedge (m \cdot x)\rlap{ .}
  \end{equation*}
\end{Prop}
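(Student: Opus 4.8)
The plan is to check, in order, that the stated operations on $B \wedge X$ are well-defined on equivalence classes, that they satisfy the presheaf axioms (i)--(iv) of \cref{def:28}, and finally that the sheaf condition~\eqref{eq:51} holds. Throughout I write a typical element of $(B \wedge X)(c) = X \quot \mathord{\equiv_c}$ as $c \wedge x$ with $x \in X$ a representative, so that $c \wedge x = c \wedge y$ means exactly $x \equiv_c y$.

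For well-definedness, suppose $c \wedge x = c \wedge y$, i.e.\ $x \equiv_c y$. Since $b \wedge c \leqslant c$, \cref{prop:2}(i) gives $x \equiv_{b \wedge c} y$, so $(b \wedge c) \wedge x = (b \wedge c) \wedge y$ and $b \wedge (\thg)$ respects classes; and the right-hand implication of~\eqref{eq:53} gives $m \cdot x \equiv_{m^\ast c} m \cdot y$, so $(m^\ast c) \wedge (m \cdot x) = (m^\ast c) \wedge (m \cdot y)$ and $m \cdot (\thg)$ respects classes. Both outputs lie over the nonzero elements $b \wedge c$ and $m^\ast c$ as required.

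Axioms (i)--(iv) then reduce to short identities. Axiom (i) is idempotence and associativity of $\wedge$ in $B$; axiom (ii) uses $1^\ast c = c$, $(mn)^\ast = m^\ast n^\ast$ and the $M$-action on $X$; axiom (iii) uses that $m^\ast$ is a Boolean homomorphism, so $m^\ast(b \wedge c) = m^\ast b \wedge m^\ast c$. For (iv), assuming $m \equiv_b n$, I would expand both sides to $(b \wedge m^\ast c) \wedge (m \cdot x)$ and $(b \wedge n^\ast c) \wedge (n \cdot x)$; the third matched-pair axiom of \cref{def:16} gives $b \wedge m^\ast c = b \wedge n^\ast c =: d$, while the left-hand implication of~\eqref{eq:53} gives $m \cdot x \equiv_b n \cdot x$ and hence $m \cdot x \equiv_d n \cdot x$ by \cref{prop:2}(i), since $d \leqslant b$; this forces the two sides to agree.

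The substantive step is the sheaf condition. Given $P \in \J_c$ and a family $x \in \prod_{b \in P} X(b)$, each $X(b)$ is inhabited, so $X$ is inhabited and I may choose representatives $\xi_b \in X$ with $x_b = b \wedge \xi_b$. Since $P \in \J_c$, the set $P \cup \{c'\}$ is (after discarding $0$) a partition in $\J$; extending the family by any fixed element in the $c'$-slot and applying \cref{prop:2}(iv) yields $z \in X$ with $z \equiv_b \xi_b$ for all $b \in P$, and I set $P(x) = c \wedge z$. This is independent of the choices: any other glueing $z'$ satisfies $z \equiv_b z'$ for every $b \in P$, so $z \equiv_c z'$ by \cref{prop:2}(iii) and $c \wedge z = c \wedge z'$; and altering a representative within its $\equiv_b$-class leaves the defining condition on $z$ unchanged. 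The equations~\eqref{eq:51} now follow: for $b \in P$ we have $b \leqslant c$, so $b \wedge P(x) = (b \wedge c) \wedge z = b \wedge z = b \wedge \xi_b = x_b$; and for $x = c \wedge \xi$ the family $\lambda b.\, b \wedge x$ has every representative equal to $\xi$, so $z = \xi$ glues it and $P(\lambda b.\, b \wedge x) = c \wedge \xi = x$. I expect the one genuinely delicate point to be precisely this glueing: passing from the local partition $P \in \J_c$ to an honest partition in $\J$ so that \cref{prop:2}(iv) applies, and confirming via \cref{prop:2}(iii) that the result is well-defined modulo $\equiv_c$; the remaining verifications are routine applications of the $\BJM$-set axioms.
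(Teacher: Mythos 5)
Your proof is correct and follows essentially the same route as the paper's: well\-/definedness via Proposition~\ref{prop:2}(i) and the right-hand implication of~\eqref{eq:53}, the identical unfolding argument for presheaf axiom (iv), and the same glueing over the extended partition $P \cup \{c'\}$ (via Proposition~\ref{prop:2}) to obtain the sheaf structure. The only cosmetic difference is that the paper anchors the $c'$\-/slot at a single fixed element $u \in X$ and invokes uniqueness of the glueing, whereas you allow an arbitrary choice and then verify independence of all choices using Proposition~\ref{prop:2}(iii).
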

\begin{proof}
  The $\BJM$-presheaf operations are well-defined by
  Proposition~\ref{prop:2}(i) and the second $\BJ$-set axiom
  in~\eqref{eq:30}; they trivially satisfy axiom (i) for a
  $\BJM$-presheaf and satisfy axioms (ii) and (iii) since $M$ acts on
  $B$ via Boolean homomorphisms. As for axiom (iv), if $m \equiv_b n$
  then
  $b \wedge (m \cdot (c \wedge x)) = (b \wedge m^\ast c) \wedge (m
  \cdot x) = (b \wedge n^\ast c) \wedge (n \cdot x) = b \wedge (m
  \cdot (c \wedge x))$ where the first and last equalities just unfold
  definitions, and the middle equality follows from $m \equiv_b n$,
  since this condition implies that
  $b \wedge m^\ast c = b \wedge n^\ast c$ and
  $m \cdot x \equiv_b n \cdot x$.

  It remains to show $B \wedge X$ is in fact a \emph{sheaf}. If $X$ is
  empty then this is trivial; otherwise, choose an arbitrary element
  $u \in X$ and now for any $P \in \J_c$ and family
  $x \in \prod_{b \in P}X(b)$, define $P(x) = c \wedge z$, where
  $z \in X$ is unique such that
  \begin{equation*}
    z \equiv_b x_b \text{ for all $b \in P$} \quad \text{and} \quad z \equiv_{c'} u\rlap{ .}
  \end{equation*}
  Now $b \wedge P(x) = b \wedge z = b \wedge x_b$ for each $c \in P$,
  giving the first axiom in~\eqref{eq:51}; furthermore, for any
  $x \in X(c)$ we have $P(\lambda b.\, b \wedge x) = c \wedge z$ where
  $z$ is unique such that $z \equiv_b x$ for all $b \in P$ and
  $z \equiv_{c'} u$. By Proposition~\ref{prop:2}(iii) we conclude that
  $z \equiv_c x$, i.e., $P(\lambda b.\, b \wedge x) = x$, which is the
  second axiom of~\eqref{eq:51}.
\end{proof}
\begin{Prop}
  \label{prop:29}
  Let $\BJM$ be a non-degenerate Grothendieck matched pair. The
  assignment $X \mapsto B \wedge X$ is the action on objects of a full
  and faithful functor
  \begin{equation}
    \label{eq:52}
    B \wedge (\thg) \colon {\BJM}\text-\cat{Set} \rightarrow
    \BJM\text-\cat{Shv}
  \end{equation}
  which exhibits ${\BJM}\text-\cat{Set}$
  as equivalent to the two-valued collapse of $\BJM\text-\cat{Shv}$.
\end{Prop}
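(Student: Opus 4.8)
The plan is to prove that \eqref{eq:52} is full and faithful, and that its essential image is precisely the two-valued collapse of $\BJM\text-\cat{Shv}$ in the sense of Definition~\ref{def:27}; since a full and faithful functor is an equivalence onto the full (replete) subcategory spanned by its essential image, identifying that subcategory with the collapse will finish the proof. Throughout I will exploit the basic observation that, because $\equiv_1$ is equality by Proposition~\ref{prop:2}(ii), we have $(B \wedge X)(1) = X$, and that under this identification the sheaf operations $m \cdot (\thg)$ and $b \wedge (\thg)$ act at sort $1$ as the $M$-action of $X$ (using $m^\ast 1 = 1$) and as the quotient maps $X \to X \quot \equiv_b$ respectively.

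First I would settle functoriality and faithfulness. A $\BJM$-set homomorphism $f \colon X \to Y$ preserves each $\equiv_b$ by Proposition~\ref{prop:2}, so the assignment $b \wedge x \mapsto b \wedge f(x)$ is a well-defined family of functions which manifestly commutes with the operations $b \wedge (\thg)$ and $m \cdot (\thg)$, giving a sheaf homomorphism $B \wedge f$; functoriality is immediate. Faithfulness holds because $B \wedge f$ determines $f = (B\wedge f)_1$. For fullness, given a $\BJM$-sheaf homomorphism $\phi \colon B \wedge X \to B \wedge Y$, I would set $f \defeq \phi_1 \colon X \to Y$. Since $\phi$ commutes with $m \cdot (\thg)$ at sort $1$ and $m^\ast 1 = 1$, the map $f$ preserves the $M$-action; and since $\phi$ commutes with $b \wedge (\thg) \colon (B \wedge X)(1) \to (B \wedge X)(b)$, we get $\phi_b(b \wedge x) = b \wedge f(x)$, so $x \equiv_b x'$ forces $f(x) \equiv_b f(x')$, i.e.\ $f$ preserves each $\equiv_b$. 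Thus $f$ is a $\BJM$-set homomorphism, and the identity $\phi_b(b \wedge x) = b \wedge f(x)$ says exactly that $B \wedge f = \phi$.

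Next I would characterise the two-valued collapse. By the proof of Proposition~\ref{prop:43} we have $\BJM\text-\cat{Shv} \cong \cat{Sh}(\C, J)$, whose terminal object $1$ has $1(c) = \{\ast\}$ for every $c$. For an arbitrary sheaf $Y$, the set $D = \{c \in B \setminus \{0\} : Y(c) \neq \emptyset\}$ is downward closed (apply the restriction maps $b \wedge (\thg)$) and $\J$-closed (if $P \in \J_c$ with $Y(b) \neq \emptyset$ for all $b \in P$, any family $(x_b)_{b \in P}$ is matching by disjointness, and gluing via \eqref{eq:51} produces an element of $Y(c)$); it is thus a $\J$-closed ideal as in Remark~\ref{rk:7}, and it is exactly the image subobject of $Y \to 1$. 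Hence $Y$ is well-supported iff $D$ is the maximal ideal, iff $Y(1) \neq \emptyset$, iff all sorts of $Y$ are nonempty; and $Y$ is initial iff all sorts are empty. Therefore the two-valued collapse consists precisely of those sheaves whose sorts are either all empty or all nonempty.

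Finally I would match the essential image with this collapse. On the one hand $B \wedge \emptyset$ is initial, while $B \wedge X$ for $X \neq \emptyset$ has all sorts nonempty, hence is well-supported; so the image lies in the collapse. Conversely, initial sheaves are isomorphic to $B \wedge \emptyset$, and for a well-supported $Y$ I would take $X \defeq Y(1)$, made into a $\BJM$-set with $M$-action given by $m \cdot (\thg)$ at sort $1$ and with relations $x \equiv_b x' \iff b \wedge x = b \wedge x'$; that these satisfy Proposition~\ref{prop:2}(i)--(iv) and the axioms \eqref{eq:53} follows directly from the sheaf axioms (i)--(iv) of Definition~\ref{def:28} together with \eqref{eq:51}. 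The maps $\theta_b \colon (B \wedge X)(b) \to Y(b)$, $b \wedge x \mapsto b \wedge x$, are well-defined and injective by the very definition of $\equiv_b$, commute with the operations by sheaf axioms (i) and (iii), and assemble into an isomorphism $B \wedge X \cong Y$. The one step carrying real content --- and the main obstacle --- is the surjectivity of each $\theta_b$: given $w \in Y(b)$ with $b \neq 1$, one must produce a global section restricting to $w$, which is done by gluing $w$ with an arbitrary $v \in Y(b')$ over the finite partition $\{b, b'\} \in \J$. This is exactly where well-supportedness is used, via $Y(b') \neq \emptyset$; together with the identification of the support of $Y$ as the $\J$-closed ideal $D$ above, it is the crux on which the whole equivalence turns.
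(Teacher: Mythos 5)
Your proof is correct and follows essentially the same route as the paper's: full faithfulness via restriction to the sort $1$, identification of the two-valued collapse with those sheaves that are initial or have all sorts non-empty, and reconstruction of a well-supported $Y$ from the $\BJM$-set $X = Y(1)$, with the crux in both cases being surjectivity of $b \wedge (\thg) \colon Y(1) \rightarrow Y(b)$, obtained by gluing a given $w \in Y(b)$ with an arbitrary element over the partition $\{b, b'\}$. The only cosmetic difference is that you justify the characterisation of well-supportedness via the site $(\C, J)$ and the image subobject ($\J$-closed ideal) $D$, where the paper argues directly in $\BJM\text-\cat{Shv}$; both amount to the same observation.
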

\begin{proof}
  Each $\BJM$-set homomorphism $f \colon X \rightarrow Y$ induces a
  $\BJM$-sheaf homomorphism
  $B \wedge f \colon B \wedge X \rightarrow B \wedge Y$ which sends
  $b \wedge x$ to $b \wedge f(x)$; this is well-defined since
  $x \equiv_b y$ implies $f(x) \equiv_b f(y)$, clearly preserves the
  $B$-actions, and preserves the $M$-actions because $f$ does so.
  Functoriality is obvious, and so we have a functor~\eqref{eq:52},
  which is faithful since we can recover $f$ from $B \wedge f$ via its
  action on \emph{total} elements, i.e., those in
  $(B \wedge X)(1) = X$. For fullness, suppose
  $g \colon B \wedge X \rightarrow B \wedge Y$ is a homomorphism, with
  action $f \colon X \rightarrow Y$ on total elements. Since
  $g(b \wedge x) = g(b \wedge (1 \wedge x)) = b \wedge g(1 \wedge x) =
  b \wedge f(x)$, we will have $g = B \wedge f$ so long as $f$ is a
  $\BJM$-set homomorphism. It clearly preserves $M$-actions; while if
  $x \equiv_b y$ in $X$ then $b \wedge x = b \wedge y$, so
  $b \wedge f(x) = b \wedge f(y)$, i.e., $f(x) \equiv_b f(y)$ as
  required.

  To complete the proof, it remains to show that a $\BJM$-sheaf is in
  the essential image of~\eqref{eq:52} just when it is either empty or
  well-supported. Since the terminal object of $\BJM\text-\cat{Shv}$
  has $1(b) = 1$ for all $b \in B \setminus \{0\}$, a sheaf $Y$ is
  well-supported just when each $Y(b)$ is non-empty which by virtue of
  the $B$-action happens just when $Y(1)$ is non-empty. Clearly, then,
  each $B \wedge X$ is either empty or well-supported according as $X$
  is empty or non-empty.

  Suppose conversely that $Y \in \BJM\text-\cat{Shv}$ has
  $Y(1) \neq \emptyset$. Note that this implies that each
  $b \wedge (\thg) \colon Y(1) \rightarrow Y(b)$ is \emph{surjective}.
  For indeed, let us choose some $u \in Y(1)$; then for any
  $y \in Y(b)$, the sheaf condition gives a unique 
  $z \in Y(1)$ with $b \wedge z = y$ and
  $b' \wedge z = b' \wedge u$---so, in particular, $y$ is in the
  image of $b \wedge (\thg)$.

  We now show that $X = Y(1)$ is a $\BJM$-set and that
  $B \wedge X \cong Y$. Clearly $X$ is an $M$-set via the operations
  $m \cdot (\thg)$ of $Y$; as for the $\BJ$-set structure, define
  $x \equiv_b y$ just when $b \wedge x = b \wedge y \in Y(b)$ (and
  $x \equiv_0 y$ always). Easily the $\equiv_b$'s are equivalence
  relations satisfying axiom (i) of Proposition~\ref{prop:2}; however,
  they also satisfy axiom (ii)$'$ therein. Indeed, for any $P \in \J$
  and $x \in X^P$, we have the element
  $z = P(\lambda b.\, b \wedge x_b) \in X$ which by the left equation
  of~\eqref{eq:51} satisfies $b \wedge z = b \wedge x_b$, i.e.,
  $z \equiv_b x_b$, for all $b \in P$. But if $z' \in X$ also
  satisfied $z' \equiv_b x_b$ for all $b \in P$, then we would have
  $z' = P(\lambda b.\, b \wedge z') = P(\lambda b.\, b \wedge x_b) =
  z$ by the right equation of~\eqref{eq:51}; so $z$ is unique such
  that $z \equiv_b x_b$ for all $b \in P$, as required. This proves
  that $X = Y(1)$ is a $\BJ$-set, and it remains to check the
  $\BJM$-set axioms~\eqref{eq:53}. But if $m \equiv_b n$ and $x \in X$
  then $b \wedge (m \cdot x) = b \wedge (n \cdot x)$ in $Y(b)$ by
  axiom (iv) for a $\BJM$-presheaf, i.e.,
  $m \cdot x \equiv_b n \cdot x$; while if $x \equiv_b y$ in $X$,
  i.e., $b \wedge x = b \wedge y$ in $Y(b)$, then
  $m^\ast b \wedge m \cdot x = m \cdot (b \wedge x) = m \cdot (b
  \wedge y) = m^\ast b \wedge m \cdot y$, i.e.,
  $m \cdot x \equiv_{m^\ast b} m \cdot y$.

  So $X$ is a $\BJM$-set. Now, since $x \equiv_b y$ in $X=Y(1)$ just
  when $b \wedge x = b \wedge y$ in $Y(b)$, we can identify
  $(B \wedge X)(b) = X \quot \equiv_b$ with the image of the map
  $b \wedge (\thg) \colon Y(1) \rightarrow Y(b)$. But, as noted above,
  this map is surjective, and so we have isomorphisms $(B \wedge X)(b)
  \cong Y(b)$ for each $b \in B \setminus \{0\}$. It is not hard to
  see that the presheaf structures match under these
  isomorphisms, so $B \wedge X \cong Y$ as desired.
\end{proof}

We can now give our promised characterisations of when
$\BJM\text-\cat{Set}$ is a topos. As mentioned above, one form of our
characterisation will involve a condition of \emph{minimality} on the
classifying category; the relevant notion here is the following one,
which extends the standard terminology for topological groupoids (for
which a sieve is typically called an ``invariant subset'').

\begin{Defn}[Minimality]
  \label{def:29}
  An \emph{open sieve} on a topological category $\mathbb{C}$ is an
  open subset of $\mathrm{ob}(\mathbb{C})$ which contains the source
  $s(f)$ of any arrow of $\mathbb{C}$ whenever it contains its target
  $t(f)$. Correspondingly, an \emph{open sieve} on a localic category
  $\mathbb{C}$ is an element $u \in \mathrm{ob}(\mathbb{C})$ such that
  $t^\ast(u) \leqslant s^\ast(u)$ in $C_1$. A topological (resp.,
  localic) category is \emph{minimal} if its only open sieves are
  $\emptyset$ and $\mathrm{ob}(\mathbb{C})$ (resp., $0$ and $1$).
\end{Defn}

\begin{Thm}
  \label{thm:8}
  Let $\BJM$ be a Grothendieck matched pair of algebras. The
  following are equivalent:
  \begin{enumerate}[(i)]
  \item For all $b \in B \setminus \{0\}$, there exists $m \in M$ with
    $m^\ast b = 1$;
  \item The topos $\BJM\text-\cat{Shv}$ is two-valued;
  \item $B \wedge (\thg) \colon \BJM\text-\cat{Set} \rightarrow
    \BJM\text-\cat{Shv}$ is an equivalence of categories;
  \item $\BJM\text-\cat{Set}$ is a topos;
  \item The classifying (topological or localic) category of $\BJM$ is minimal.
  \end{enumerate}
\end{Thm}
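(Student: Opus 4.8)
The plan is to prove the five conditions equivalent through the implications $(i)\Leftrightarrow(v)$, $(ii)\Leftrightarrow(v)$, $(ii)\Leftrightarrow(iii)$, $(iii)\Rightarrow(iv)$ and $(iv)\Rightarrow(ii)$, reserving the only substantial calculation for the step $(v)\Rightarrow(i)$. I would begin with $(i)\Leftrightarrow(v)$, working throughout with the classifying localic category of Definition~\ref{def:26} (in the finitary case the topological category of Definition~\ref{def:25} has exactly the matching open sieves, so minimality agrees). Unwinding Definition~\ref{def:29} and the formulas $s^\ast(I)=\lambda m.\,I$, $t^\ast(I)=\lambda m.\,m^\ast I$, an open sieve is precisely a $\J$-closed ideal $I\subseteq B$ with $m^\ast I\subseteq I$ for all $m\in M$. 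Given $(i)$, any such $I\neq 0$ contains some $b\neq 0$, whence $1=m^\ast b\in m^\ast I\subseteq I$ and $I=B$, so the category is minimal. For the converse, given $b\neq 0$ I would consider $I=\{c\in B : c\leqslant m^\ast b \text{ for some } m\in M\}$, which is visibly downward-closed, contains $b$, and is $M$-invariant because $n^\ast(m^\ast b)=(mn)^\ast b$; minimality then gives $I=B\ni 1$, i.e.\ a single $m$ with $m^\ast b=1$.

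The crux of the whole theorem lies in verifying that this $I$ is a \emph{$\J$-closed} ideal, since the naive set above is not obviously closed under joins. Suppose $P\in\J_c$ with each $p\in P$ below some $m_p^\ast b$. Using that $M$ is a $\BJ$-set, I would patch the family $(m_p)$ over $P$ by Proposition~\ref{prop:2}(ii)$'$ into a single $m$ with $m\equiv_p m_p$ for all $p$; the third matched-pair axiom of Definition~\ref{def:16} then gives $p\wedge m^\ast b=p\wedge m_p^\ast b=p$, so $p\leqslant m^\ast b$ for every $p\in P$ and hence $c=\bigvee P\leqslant m^\ast b$, i.e.\ $c\in I$. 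Closure under finite joins follows by the same $\J$-closedness applied to a two-element partition. This amalgamation of the witnesses $m_p$ into one witness $m$ is the genuinely non-formal point, and it is exactly where the interaction between the $\BJ$-set structure on $M$ and the action $m^\ast$ is used.

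Next I would establish $(ii)\Leftrightarrow(v)$ using the presentation $\BJM\text-\cat{Shv}\cong\cat{Sh}(\C,J)$ from the proof of Proposition~\ref{prop:43}. Subobjects of the terminal object of a sheaf topos are the $J$-closed sieves on $\C$; chasing the description of $\C$ (morphisms $m\colon b\to c$ with $b\leqslant m^\ast c$) identifies these with the downward-closed, $M^\ast$-stable, $\J$-closed subsets of $B\setminus\{0\}$, i.e.\ with the very same $M$-invariant $\J$-closed ideals describing the open sieves above. Hence $\BJM\text-\cat{Shv}$ is two-valued, $\mathrm{Sub}(1)=\{0,1\}$, exactly when the classifying category is minimal. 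For $(ii)\Leftrightarrow(iii)$ I would invoke the general fact that a topos is two-valued iff every object is initial or well-supported (via strictness of $0$ and the epi--mono factorisation): by Proposition~\ref{prop:29} the essential image of $B\wedge(\thg)$ consists precisely of the initial and well-supported sheaves, and this functor is already full and faithful, so it is an equivalence iff every sheaf is initial or well-supported. Then $(iii)\Rightarrow(iv)$ is immediate, since the equivalence transports the topos structure of $\BJM\text-\cat{Shv}$ (Proposition~\ref{prop:43}) onto $\BJM\text-\cat{Set}$, and $(iv)\Rightarrow(ii)$ follows from~\cite{Johnstone1990Collapsed}: Proposition~\ref{prop:29} exhibits $\BJM\text-\cat{Shv}$ as a topos whose two-valued collapse is $\BJM\text-\cat{Set}$, and by essential uniqueness it is \emph{the} associated topos; since any variety has $\mathrm{Sub}(1)=\{0,1\}$, if $\BJM\text-\cat{Set}$ is a topos it equals its own two-valued collapse, forcing $\BJM\text-\cat{Shv}$ to be two-valued.

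I expect the main obstacle to be the patching argument inside $(v)\Rightarrow(i)$, and more precisely the verification that $I$ is $\J$-closed: this is the one place where pointwise witnesses $m_p$ must be glued into a global $m$, and it relies essentially on Proposition~\ref{prop:2} together with the matched-pair axioms rather than on formal nonsense. The two remaining inputs — the sheaf-theoretic computation of $\mathrm{Sub}(1)$ and Johnstone's collapse theorem — are imported wholesale, and everything else is routine bookkeeping.
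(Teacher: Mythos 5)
Your proof is correct, but it closes the cycle of implications in a genuinely different way from the paper. Where you coincide: your $(i)\Leftrightarrow(v)$ is essentially the paper's, and your patching argument---gluing the local witnesses $m_p$ into a single $m$ via Proposition~\ref{prop:2}(ii)$'$ and using the third matched-pair axiom to get $p \wedge m^\ast b = p \wedge m_p^\ast b = p$---is exactly the paper's key computation, there phrased as: if $1$ lies in the $\J$-closed ideal $M^\ast b$, witnessed by $\{c_i\} \in \J$ and $n_i$ with $c_i \leqslant n_i^\ast b$, then the patched $m$ has $m^\ast b = \bigvee_i c_i = 1$. Your observation that the down-closure of $\{m^\ast b : m \in M\}$ is \emph{already} $\J$-closed is a marginally cleaner packaging of the same step. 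Where you differ: the paper proves $(i)\Rightarrow(ii)$ by a one-line argument (a nonempty $U(b)$ maps into $U(m^\ast b) = U(1)$ under $m \cdot ({\thg})$), and---crucially---proves $(iv)\Rightarrow(i)$ by a short, self-contained argument inside $\BJM\text-\cat{Set}$ itself: pulling back the jointly epimorphic pair of maps $1 \rightrightarrows B$ picking out $0,1$ along $\varphi \colon m \mapsto m^\ast b$, then using pullback-stability of jointly epic families and the fact that epic monos are invertible in a topos. You instead prove $(ii)\Leftrightarrow(v)$ by computing $\mathrm{Sub}(1)$ of the site presentation from Proposition~\ref{prop:43} (correct, and conceptually pleasant: subterminal sheaves $=$ open sieves $=$ $M$-invariant $\J$-closed ideals), and close the loop with $(iv)\Rightarrow(ii)$ by invoking the essential-uniqueness clause of Johnstone's collapse theorem. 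That import is legitimate---the paper's own prose draws exactly this inference---but it has two costs. First, it makes the proof depend on the hardest part of \cite{Johnstone1990Collapsed}, which the paper deliberately avoids: the direct $(iv)\Rightarrow(i)$ argument is what entitles the paper to claim a new, self-contained proof of (this instance of) Johnstone's result, and your route forfeits that. Second, to apply uniqueness you need both toposes to lie in the class where uniqueness holds: $\BJM\text-\cat{Shv}$ is Grothendieck by Proposition~\ref{prop:43}, but for $\BJM\text-\cat{Set}$, assumed in (iv) only to be an elementary topos, you must note it is cocomplete with a small generating set and hence Grothendieck---a standard but unstated step. Finally, two harmless slips: $M$-stability of your ideal $I$ rests on $n^\ast(m^\ast b) = (nm)^\ast b$, not $(mn)^\ast b$ (the action is a left action), though invariance holds either way; and before applying Proposition~\ref{prop:2}(ii)$'$ you should extend $P \in \J_c$ to a partition of $1$ (adjoining $c'$ with witness, say, $m_{c'} = 1$), since patching in $M$ is only available over partitions in $\J$.
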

\begin{proof}
  We first show (i) $\Rightarrow$ (ii). $\BJM\text-\cat{Shv}$ is
  two-valued if any subobject $U$ of the terminal sheaf $1$ is either
  empty or equal to $1$. But if any $U(b)$ is non-empty then on
  choosing $m$ as in (i), we see that $U(1)$ is also non-empty: so $U$
  is well-supported and so must equal $1$. Now (ii) $\Rightarrow$
  (iii) follows since $B \wedge (\thg)$ exhibits $\BJM\text-\cat{Set}$
  as equivalent to the two-valued collapse of $\BJM\text-\cat{Shv}$,
  and (iii) $\Rightarrow$ (iv) is trivial as $\BJM\text-\cat{Shv}$ is
  a topos. We now prove (iv) $\Rightarrow$ (i). Given
  $b \in B \setminus \{0\}$, consider the following diagram in
  $\BJM\text-\cat{Set}$, where $\varphi \colon M \rightarrow B$ is
  the homomorphism $m \mapsto m^\ast b$, the bottom maps pick
  out $0,1 \in B$, and both squares are pullbacks:
  \begin{equation*}
    \cd{
      \varphi^{-1}(0) \ar[d]_{!} \ar@{ >->}[r]^-{} &
      M \ar[d]_-{\varphi} &
      \varphi^{-1}(1) \ar@{ >->}[l] \ar[d]^-{!} \\
      1 \ar[r]^-{0} &
      B &
      1 \ar[l]_-{1} \rlap{ .}
    }
  \end{equation*}
  The two pullback objects are given by
  \begin{equation*}
    \varphi^{-1}(0) = \{ m \in M : m^\ast b = 0\} \qquad
    \text{and} \qquad
    \varphi^{-1}(1) = \{ m \in M : m^\ast b = 1\}
  \end{equation*}
  and so to prove (i) we must show $\varphi^{-1}(1)$ is non-empty. The
  maps on the bottom row are jointly epimorphic, since $0,1$ generate
  $B$ as a $\BJ$-set; thus, as jointly epimorphic families are
  pullback-stable in a topos, the maps on the top row must also be
  jointly epimorphic. So if $\varphi^{-1}(1)$ were empty,
  $\varphi^{-1}(0) \rightarrowtail M$ would be an epimorphic
  monomorphism in a topos, and hence invertible. But then
  $1 \in \varphi^{-1}(0)$, i.e., $b = 1^\ast b = 0$, contradicting
  $b \in B \setminus \{0\}$. So $\varphi^{-1}(1)$ is non-empty as
  required.

  To complete the proof, we show that (i) is equivalent to (v). It
  suffices to consider the \emph{localic} classifying category, since
  in the finitary case, the classifying localic category is spatial,
  and the minimality of the localic category and the corresponding
  topological category come to the same thing. We first prove the
  following claim: given $b \neq 0 \in B$, the $\J$-closed ideal
  $M^\ast b \subseteq B$ generated by the elements
  $\{m^\ast b : m \in M\}$ is all of $B$ if and only if there exists
  $m \in M$ with $m^\ast b = 1$. Since $M^\ast b=B$ just when
  $1 \in M^\ast b$, the ``if'' direction is trivial. For the converse,
  to say $1 \in M^\ast b$ is to say that there exists
  $\{c_i : i \in I\} \in \J$ and $(n_i \in M : i \in I)$ such that
  $c_i \leqslant n_i^\ast(b)$ for each $i \in I$. Taking $m \in M$
  unique such that $m \equiv_{c_i} n_i$ for each $i$, we have
  $m^\ast b = \textstyle\bigvee_{i} c_i \wedge n_i^\ast(b) = \bigvee_i
  c_i = 1$ as desired.

  We now prove (i) $\Leftrightarrow$ (v). An open sieve of the
  classifying localic groupoid $\mathbb{C}_{\BJM}$ is, by definition,
  an ideal $I \in \mathrm{Idl}_\J(B)$ such that
  $t^\ast(I) \leqslant s^\ast(I) \colon M \rightarrow
  \mathrm{Idl}_\J(B)$, i.e., such that $m^\ast I \subseteq I$ for all
  $m \in M$. Clearly, any ideal of the form $M^\ast b$ is an open
  sieve; conversely, if $I$ is an open sieve and $b \in B$ then
  $M^\ast b \subseteq I$, so that we can write $I$ as a union of open
  sieves $I = \bigcup_{b \in I} M^\ast b$. By these observations, to
  ask that the only open sieves of $\mathbb{C}$ are $\{0\}$ and $B$ is
  equally well to ask that every sieve of the form $M^\ast b$ is
  either $\{0\}$ or $B$. Of course, $M^\ast b = \{0\}$ only when
  $b = 0$, and so $\mathbb{C}$ is minimal just when $M^\ast b = B$ for
  all $b \neq 0$; which, by the claim proved above, is to say that for
  all $b \neq 0$ there exists $m \in M$ with $m^\ast b = 1$.
\end{proof}

\section{The groupoidal case}
\label{sec:groupoid-case}

In this section, we describe semantic and syntactic conditions on a
cartesian closed variety which are equivalent to its classifying
topological or localic category being a \emph{groupoid}. To motivate
this, we consider the category of left $M$-sets for a monoid $M$; this
is a cartesian closed variety whose classifying topological category is
$M$ itself, seen as a one-object discrete topological category, and
clearly this is a groupoid just when $M$ is a \emph{group}.

This syntactic condition can be recast in terms of the cartesian
closed structure of the category of $M$-sets. In general, this is
given by the usual formula for internal homs in a presheaf category,
so that $Z^Y$ is the set of $M$-set maps $M \times Y \rightarrow Z$,
with the $M$-set structure $(m \cdot f)(n,y) = f(nm,y)$.
However, when $M$ is a group, we have an alternative, simpler
presentation; we may take $Z^Y = \cat{Set}(Y,Z)$ with the $M$-set
structure given by conjugation:
\begin{equation}
  \label{eq:57}
  (m \cdot f)(y) = m \cdot f(m^{-1} \cdot y)\rlap{ .}
\end{equation}
Thus, when $M$ is a group, the function-spaces in $M\text-\cat{Set}$ are
lifts of the function-spaces of $\cat{Set}$. A more precise way of
saying this is that the forgetful functor
$U \colon M\text-\cat{Set} \rightarrow \cat{Set}$ is 
\emph{cartesian closed}:

\begin{Defn}
  \label{def:30}
  Let $\C$ and $\D$ be cartesian closed categories. A
  finite-product-preserving functor $U \colon \C \rightarrow \D$ is
  \emph{cartesian closed} if, for all $Y,Z \in \C$, the map
  $U(Z^Y) \rightarrow UZ^{UY}$ in $\D$ found as the transpose of the
  following map is invertible:
  \begin{equation*}
    U(Z^Y) \times UY \xrightarrow{\cong} U(Z^Y \times Y) \xrightarrow{U(\mathrm{ev})} UY\rlap{ .}
  \end{equation*}
\end{Defn}

It is therefore natural to conjecture that, for a general
(Grothendieck) matched pair $\BJM$, the classifying topological or
localic category $\mathbb{C}_{\BJM}$ should be a groupoid precisely
when the internal homs in $\BJM\text-\cat{Set}$ are computed as in
$\BJ\text-\cat{Set}$; that is, just when the forgetful functor
$U \colon \BJM\text-\cat{Set} \rightarrow \BJ\text-\cat{Set}$ is
cartesian closed. The main result of this section will show that this
is the case. Before stating it, we need to say what it means for a
Grothendieck Boolean restriction monoid $S_\J$ to be ``generated by
partial isomorphisms'':

\begin{Defn}[Partial isomorphism, \'etale Grothendieck Boolean
  restriction monoid]
  An element $s$ of a Grothendieck Boolean restriction monoid $S_\J$
  is a \emph{partial isomorphism} if there exists
  a---necessarily unique---$t \in S$ with $st = s^+$ and
  $ts = t^+$. We call $S_\J$ is \emph{\'etale} if each
  $s \in S_\J$ is an admissible join of partial isomorphisms.
\end{Defn}

\begin{Thm}
  \label{thm:9}
  Let $\BJM$ be a Grothendieck matched pair of algebras. The
  following are equivalent:
  \begin{enumerate}[(i)]
  \item The forgetful functor $U \colon \BJM\text-\cat{Set}
    \rightarrow \BJ\text-\cat{Set}$ is cartesian closed;
  \item  The following condition holds:
    \begin{equation}
      \label{eq:41}
      \begin{gathered}
        \text{For all } m \in M \text{, there exists } \{b_i : i \in I\} \in
        \J \text{ and families } (n_i \in M : i \in I) \\[-3pt] \text{ and 
        } (c_i \in B : i \in I) \text{ with } b_i \leqslant m^\ast c_i \text{,
        } mn_i \equiv_{b_i} 1 \text{ and } n_i m \equiv_{c_i} 1 \text{ for all }i\text{.} 
      \end{gathered}
    \end{equation}
  \item The associated Grothendieck Boolean restriction monoid $S_\J$ is \'etale;
  \item The classifying (topological or localic) category of $\BJM$ is
    a groupoid.
\end{enumerate}
\end{Thm}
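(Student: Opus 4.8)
The plan is to establish the block of equivalences (ii) $\Leftrightarrow$ (iii) $\Leftrightarrow$ (iv) first, and then to close the cycle by proving (ii) $\Leftrightarrow$ (i), which is where the real work lies. For (iii) $\Leftrightarrow$ (iv) I would simply invoke the generalised non-commutative Stone duality of~\cite{Cockett2021Generalising}: under it, the partial isomorphisms of $S_\J$ correspond exactly to the locally invertible arrows (local bisections) of the classifying category, so that $S_\J$ being \'etale---every element an admissible join of partial isomorphisms---translates into every arrow of the classifying topological or localic category being invertible, i.e.\ the category being a groupoid. For (ii) $\Leftrightarrow$ (iii) I would unwind the \'etale condition through the explicit description of $S$ from Proposition~\ref{prop:26}: an element $\res m b$ is a partial isomorphism precisely when there exist $n \in M$ and $c \in B$ with $b \leqslant m^\ast c$, $c \leqslant n^\ast b$, $mn \equiv_b 1$ and $nm \equiv_c 1$. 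Writing an arbitrary total element $m = \res m 1$ as an admissible join $\bigvee_i \res{n_i}{b_i}$ of such then forces a partition $\{b_i\} \in \J$ together with local inverses, which after using the first two matched-pair axioms to replace each $n_i$ by $m$ is exactly condition~\eqref{eq:41}; conversely~\eqref{eq:41} exhibits every total element, and hence (after restriction by idempotents) every element, as such a join.

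For (ii) $\Rightarrow$ (i): recall from Proposition~\ref{prop:3} that $Z^Y$ in $\BJM\text-\cat{Set}$ has underlying set the $\BJM$-homomorphisms $M \times Y \to Z$, and that the comparison map of Definition~\ref{def:30} is restriction along the unit, $f \mapsto f(1, \thg)$. So I must show that each $\BJ$-homomorphism $g \colon UY \to UZ$ extends uniquely to a $\BJM$-homomorphism $f$ with $f(1,\thg) = g$. Given $m \in M$, choose data $\{b_i\}, n_i, c_i$ for $m$ as in~\eqref{eq:41}; a short calculation using $M$-equivariance together with the two implications of~\eqref{eq:53} shows that any such $f$ must satisfy $f(m, y) \equiv_{b_i} m \cdot g(n_i \cdot y)$ for every $i$. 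Since $\{b_i\}$ is a partition in $\J$, Proposition~\ref{prop:2}(ii)$'$ then both \emph{determines} $f(m,y)$ uniquely (giving injectivity of the comparison) and, read as a definition, \emph{constructs} it by gluing the local values $m \cdot g(n_i \cdot y)$ (giving surjectivity). The remaining labour is routine but must be done carefully: checking that this $f$ is independent of the chosen covering data, that it is genuinely a $\BJ$-homomorphism and $M$-equivariant, and that $f(1,\thg) = g$.

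For the converse (i) $\Rightarrow$ (ii), I would feed a well-chosen test case into the comparison isomorphism. Taking $Y = Z = M$ with its canonical $\BJM$-structure, cartesian closedness says every $\BJ$-endomorphism of $UM$ lifts \emph{uniquely} to an $M$-equivariant $\BJ$-endomorphism of $M \times M$. The point is that $M$-equivariance alone only pins down such a lift on the orbit $M \cdot (\{1\} \times M)$ of the unit section; uniqueness of the lift therefore forces every pair $(m, y)$ to lie, \emph{locally over a partition in $\J$}, in that orbit---which is precisely to say that $m$ admits two-sided local inverses over a cover, and unwinding this produces the partition $\{b_i\}$ and the elements $n_i, c_i$ of~\eqref{eq:41}.

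The main obstacle is this last extraction. The difficulty is that ``$(m,y)$ lies locally in the orbit of the unit section'' is naturally a statement about \emph{points} (ultrafilters) of $B$, whereas in the Grothendieck case $B_\J$ may have too few points; so rather than arguing pointwise one must manufacture the covering partition $\{b_i\} \in \J$ and the local inverses directly and algebraically from the uniqueness (and surjectivity) of lifts, taking care that the cover genuinely lies in $\J$ and that \emph{both} conditions $mn_i \equiv_{b_i} 1$ and $n_i m \equiv_{c_i} 1$ (with $b_i \leqslant m^\ast c_i$) are secured, rather than just a one-sided version. Once~\eqref{eq:41} is in hand, the block of equivalences above closes the cycle and yields all four conditions.
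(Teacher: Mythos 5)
Your treatment of (iii) $\Leftrightarrow$ (iv) and (ii) $\Leftrightarrow$ (iii) coincides with the paper's, and your route to (ii) $\Rightarrow$ (i) is a legitimate alternative to the paper's: where the paper constructs the left adjoint $M \otimes_B (\thg)$ (Proposition~\ref{prop:31}) via the $\J$-closed-ideal relations $\equiv^m_b$ of Lemma~\ref{lem:12} and then tests invertibility of $\theta_X$ (Lemma~\ref{lem:13}), you verify directly that the unit section exhibits $M \times Y$ as the free $\BJM$-set on $UY$, defining the extension by letting $f(m,y)$ be the unique $z$ with $z \equiv_{b_i} m \cdot g(n_i \cdot y)$ for all $i$ (Proposition~\ref{prop:2}(ii)$'$). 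The checks you defer---independence of the chosen data, preservation of the $\equiv_b$, and $M$-equivariance---do all go through using only the matched-pair axioms, and they use the two-sided data $c_i$ of~\eqref{eq:41} in an essential way; so this half is sound, and genuinely avoids the paper's machinery.

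The genuine gap is in (i) $\Rightarrow$ (ii). Freeness of $M \times M$ on $UM$ gives, by the standard generation argument (the sub-$\BJM$-set generated by the unit section retracts onto $M \times M$, hence equals it), that $(m,1)$ is glued over some $P \in \J$ from orbit elements: $m \equiv_b n_b$ and $1 \equiv_b n_b y_b$ for all $b \in P$, whence $m y_b \equiv_b 1$. This is only the \emph{one-sided} half of~\eqref{eq:41}. Your assertion that lying locally in the orbit ``is precisely to say that $m$ admits two-sided local inverses over a cover'' is not correct as stated---what it says precisely is that $m$ admits local \emph{right} inverses---and your final paragraph concedes that securing $n_i m \equiv_{c_i} 1$ with $b_i \leqslant m^\ast c_i$ is the main obstacle while offering no mechanism for it. This is exactly the point at which the paper invokes~\eqref{eq:45}, whose proof runs through the explicit presentation of the free $\BJM$-set on a $\BJ$-set, i.e.\ through the very apparatus your proposal is designed to bypass; as it stands, your cycle of implications does not close.

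Happily, the gap admits a purely algebraic repair: a localisation of the classical fact that a monoid in which every element has a right inverse is a group. Apply the one-sided conclusion to each $n_i$, obtaining $\{d_{ij} : j \in J_i\} \in \J$ and $p_{ij}$ with $n_i p_{ij} \equiv_{d_{ij}} 1$. Right-multiplying $mn_i \equiv_{b_i} 1$ by $p_{ij}$ and left-multiplying $n_i p_{ij} \equiv_{d_{ij}} 1$ by $m$ gives $m n_i p_{ij} \equiv_{b_i} p_{ij}$ and $m n_i p_{ij} \equiv_{m^\ast d_{ij}} m$, so $p_{ij} \equiv_{e_{ij}} m$ where $e_{ij} \defeq b_i \wedge m^\ast d_{ij}$. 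Left-multiplying by $n_i$ then yields $n_i m \equiv_{c_{ij}} 1$ for $c_{ij} \defeq d_{ij} \wedge n_i^\ast e_{ij}$; and since $m^\ast c_{ij} = m^\ast d_{ij} \wedge (mn_i)^\ast e_{ij}$, the third matched-pair axiom applied to $mn_i \equiv_{b_i} 1$ gives $e_{ij} \leqslant m^\ast c_{ij}$. As $\{e_{ij} : i,j\}^-$ is a partition in $\J$ refining $\{b_i\}$, the data $(e_{ij}, n_i, c_{ij})$ witness~\eqref{eq:41}. With this patch your argument is complete, and in fact lighter than the paper's, which needs Proposition~\ref{prop:31} for both directions of (i) $\Leftrightarrow$ (ii).
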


\begin{Rk}
  A Grothendieck topos is called an \emph{\'etendue} when it is
  equivalent to the category of equivariant sheaves on an \'etale
  localic groupoid, and it is natural to ask for which $\BJM$ the
  topos $\BJM\text-\cat{Shv}$ is an \'etendue. Since
  $\BJM\text-\cat{Shv}$ can be presented as the topos of equivariant
  sheaves on the associated localic or topological category, we see
  that for any $\BJM$ to which Theorem~\ref{thm:9} applies, the topos
  $\BJM\text-\cat{Shv}$ will be an \'etendue. However, this sufficient
  condition is \emph{not} necessary; for example, the topos
  $\mathbb{N}\text-\cat{Set}$ is an \'etendue, but does not satisfy
  Theorem~\ref{thm:9}. We leave it to further work to characterise
  \emph{exactly} which matched pairs $\BJM$ give rise to  \'etendue.
\end{Rk}

Leaving aside the equivalence of (i) and (ii), we can dispatch the
remaining parts of the proof of Theorem~\ref{thm:9} rather quickly:
\begin{proof}
  (iii) $\Leftrightarrow$ (iv) is a consequence
  of~\cite[Theorem~6.3]{Cockett2021Generalising}. To see (ii)
  $\Leftrightarrow$ (iii), note first that in~\eqref{eq:41}, on
  replacing each $c_i$ by $c_i \wedge n_i^\ast b_i$ we may without
  loss of generality assume that we also have
  $c_i \leqslant n_i^\ast b_i$ for each $i$. Considering now (iii), if
  $s \leqslant t \in S_{\J}$ and $t$ is a partial isomorphism, then so
  is $s$; whence $S_{\J}$ will be \'etale as soon as every
  \emph{total} element $\res m 1$ is an admissible join of partial
  isomorphisms. This is equally to say that, for each $m \in M$, there
  is some $\{b_i : i \in I\} \in \J$ for which each $\res m {b_i}$ has
  a partial inverse $\res {n_i} {c_i}$, i.e.,
  $\res m {b_i} \res {n_i} {c_i} = \res 1 {b_i}$ and
  $\res {n_i} {c_i} \res m {b_i} = \res 1 {c_i}$. This says that:
  \begin{equation*}
    b_i \leqslant m^\ast c_i \qquad  mn_i \equiv_{b_i} 1 \qquad c_i \leqslant n_i^\ast b_i \qquad \text{and} \qquad 
    n_i m \equiv_{c_i} 1
  \end{equation*}
  for each $i$, which are precisely the conditions of~\eqref{eq:41}
  augmented by the additional inequalities
  $c_i \leqslant n_i^\ast b_i$ which we justified above.
\end{proof}

This leaves only the proof (i) $\Leftrightarrow$ (ii); this will rest
on the fact, explained
in~\cite[Proposition~1.5.8]{Johnstone2002Sketches}, that an adjunction
$U \colon \D \leftrightarrows \C \colon F$ between cartesian closed
categories has $U$ cartesian closed just when the canonical
(``Frobenius'') maps $F(B \times UA) \rightarrow FB \times A$ are
invertible. To exploit this, we must to describe the functor
$M \otimes_B (\thg) \colon \BJ\text-\cat{Set} \rightarrow
\BJM\text-\cat{Set}$ which is left adjoint to
$U \colon \BJM\text-\cat{Set} \rightarrow \BJ\text-\cat{Set}$.

As a first approximation, we could try taking
$M \otimes_B X = M \times X$ with the free $M$-action
$m \cdot (n, x) = (mn, x)$. Of course this is an $M$-set; but how
would we define $\BJ$-set structure? Well, since the unit map
$X \rightarrow M \times X$ sending $x \mapsto (1,x)$ should be a
$\BJ$-set homomorphism, $x_1 \equiv_b x_2$ should imply
$(1, x_1) \equiv_b (1,x_1)$; but also, since
$m \cdot (1,x_i) = (m,x_i)$, that
$(m,x_1) \equiv_{m^\ast b} (m, x_2)$. Since, as in Remark~\ref{rk:7},
the set $\sheq {(m, x_1)} {(m, x_2)}$ should be a $\J$-closed ideal of
$B$, this suggests taking it to be the closed ideal generated by the
elements $m^\ast b$ where $x_1 \equiv_b x_2$, as follows:

\begin{Defn}
  \label{def:31}
  Let $\BJM$ be a Grothendieck matched pair of algebras. For any
  $m \in M$, any $\BJ$-set $X$, and any $x,y \in X$, write
  $m^\ast \sheq x y \subseteq B$ for the $\J$-closed ideal generated
  by $\{m^\ast b : x \equiv_b y\}$, and write $x \equiv^m_b
  y$ to mean that $b \in m^\ast \sheq x y$.
\end{Defn}
\begin{Rk}
  \label{rk:6}
  By axiom (i) for a zero-dimensional topology, the $\J$-closed ideal
  generated by a set $S \subseteq B$ is composed of all $b \in B$ such
  that $P \subseteq \mathop{\downarrow} S$ for some $P \in \J_b$. It
  follows that $x \equiv^m_b y$ just when there exists
  $\{b_i : i \in I\} \in \J_b$ and a family $(c_i \in B : i \in I)$
  with $b_i \leqslant m^\ast c_i$ and $x \equiv_{c_i} y$ for each $i$.
  However, in what follows, we will avoid using this concrete
  description of $\equiv^m_b$ until the very last moment---namely, in
  the proof of (ii) $\Leftrightarrow$ (iii) in
  Proposition~\ref{prop:42}.
\end{Rk}

The following lemma records the basic properties of the relations
$\equiv^m_b$. Its proof is a straightforward exercise in locale theory
but we include it for self-containedness.
\begin{Lemma}
  \label{lem:12}
  Let $\BJM$ be a Grothendieck matched pair of algebras and $X$ a
  $\BJ$-set. The relations $\equiv_b^m$ are equivalence relations, and
  satisfy the following conditions:
  \begin{enumerate}[(i)]
  \item If $x \equiv_{b} y$ then $x \equiv^m_{m^\ast b} y$;
  \item If $x \equiv^m_{b} y$ and $c \leqslant b$ then $x \equiv^m_{c} y$;
  \item If $P \in \J_b$ and 
    $x \equiv^m_{c} y$ for all $c \in P$, then $x \equiv^m_{b} y$;
  \item If $x \equiv^m_{b} y$ then $x \equiv^{nm}_{n^\ast b} y$ for any
    $n \in M$;
  \item If $X$ is a $\BJM$-set and $x \equiv^m_{b} y$ then $m
    \cdot x \equiv_b m \cdot y$;
  \item If $m \equiv_b n$ then $\equiv^m_{c}$ and $\equiv^n_{c}$
    coincide for all $c \leqslant b$.
  \end{enumerate}
\end{Lemma}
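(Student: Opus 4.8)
The plan is to route everything through the single operation $I \mapsto m^\ast I$ of Definition~\ref{def:26} on the frame $\mathrm{Idl}_\J(B)$ of $\J$-closed ideals, and to reduce each assertion to a formal property of this operation. By Remark~\ref{rk:7}, each $\sheq x y$ is a $\J$-closed ideal, and by construction $m^\ast \sheq x y$ is exactly the value $m^\ast(\sheq x y)$ of that operation at the $\J$-closed ideal $\sheq x y$. So the first---and essentially only substantive---step is to record the structural properties of $I \mapsto m^\ast I$: that for each $m$ it is a \emph{frame endomorphism} of $\mathrm{Idl}_\J(B)$ (in particular monotone, and preserving the top ideal $B$, arbitrary joins, and finite meets); that it carries generators by $b \mapsto m^\ast b$; and that it is functorial in $m$, i.e.\ $1^\ast I = I$ and $(nm)^\ast I = n^\ast(m^\ast I)$ (the latter using $(nm)^\ast = n^\ast m^\ast$ on $B$, which is the left-action axiom). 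Since in the frame $m^\ast I = \bigvee_{c \in I}\mathop{\downarrow}(m^\ast c)$, both join-preservation and the generator description are immediate; top-preservation is $m^\ast 1 = 1$; and functoriality follows because $n^\ast$ preserves the $\J$-closure.

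With this in hand, the equivalence-relation axioms and items (i)--(iii), (v) are short. Reflexivity holds because $\sheq x x = B$ and $m^\ast B = B$; symmetry because $\sheq x y = \sheq y x$; and transitivity because $\sheq x y \cap \sheq y z \subseteq \sheq x z$ (the $\equiv_b$ being equivalence relations), so that meet-preservation gives $m^\ast \sheq x y \cap m^\ast \sheq y z = m^\ast(\sheq x y \cap \sheq y z) \subseteq m^\ast \sheq x z$. Item (i) is just that $m^\ast b$ is a generator of $m^\ast \sheq x y$; (ii) is downward-closure and (iii) is $\J$-closure of the ideal $m^\ast \sheq x y$. For (v), the $\BJM$-set axiom \eqref{eq:53} shows that each generator $m^\ast c$ (with $c \in \sheq x y$) lies in the $\J$-closed ideal $\sheq{m \cdot x}{m \cdot y}$; by minimality of the generated ideal, $m^\ast \sheq x y \subseteq \sheq{m \cdot x}{m \cdot y}$, which is precisely (v).

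For (iv) I would invoke functoriality: from $b \in m^\ast \sheq x y$, the element $n^\ast b$ is a generator of $n^\ast(m^\ast \sheq x y) = (nm)^\ast \sheq x y$, hence lies in it, which is the assertion $x \equiv^{nm}_{n^\ast b} y$. For (vi), assuming $m \equiv_b n$, I would work below $b$ using frame distributivity: writing $m^\ast \sheq x y = \bigvee_{d \in \sheq x y}\mathop{\downarrow}(m^\ast d)$, meeting with $\mathop{\downarrow} b$ distributes over the join, and $\mathop{\downarrow} b \wedge \mathop{\downarrow}(m^\ast d) = \mathop{\downarrow}(b \wedge m^\ast d)$. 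The third axiom of Definition~\ref{def:16} gives $b \wedge m^\ast d = b \wedge n^\ast d$ for every $d$, so $m^\ast \sheq x y$ and $n^\ast \sheq x y$ have the same meet with $\mathop{\downarrow} b$; as $c \leqslant b$, this yields $c \in m^\ast \sheq x y \iff c \in n^\ast \sheq x y$, i.e.\ $\equiv^m_c$ and $\equiv^n_c$ coincide.

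The only genuine obstacle is the frame-endomorphism claim, and within it the inclusion $m^\ast I \cap m^\ast J \subseteq m^\ast(I \cap J)$ (preservation of binary meets); the reverse inclusion and all the rest are formal. I expect to prove it exactly as one shows the inverse-image map of a Boolean homomorphism is a frame map: on principal generators it rests on $m^\ast(c) \wedge m^\ast(d) = m^\ast(c \wedge d)$ (that $m^\ast \colon B \to B$ is a meet-preserving Boolean homomorphism), while the passage to $\J$-closed generation is handled using that $m^\ast$ is a \emph{Grothendieck} Boolean homomorphism, i.e.\ sends partitions in $\J$ to partitions in $\J$. This is the ``straightforward exercise in locale theory'' alluded to in the statement; once it is dispatched, items (i)--(vi) are bookkeeping.
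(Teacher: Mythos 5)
Your proposal is correct, but it is packaged quite differently from the paper's proof. You factor everything through one structural lemma --- that $I \mapsto m^\ast I$ is a frame endomorphism of $\mathrm{Idl}_\J(B)$, functorial in $m$ --- after which reflexivity, symmetry, (i)--(iii) and (v) are immediate, transitivity is meet-preservation applied to $\sheq x y \cap \sheq y z \subseteq \sheq x z$, (iv) is functoriality, and (vi) is frame distributivity combined with the matched-pair identity $b \wedge m^\ast d = b \wedge n^\ast d$; all of these reductions are sound. The paper states no such lemma: it instead runs, for each item, a bespoke inline ``nested $\J$-closed ideal'' argument --- transitivity via the auxiliary ideals $I = \{d \in B : m^\ast b \wedge d \in m^\ast \sheq x z\}$ and $J = \{e \in B : e \wedge d \in m^\ast \sheq x z \,\forall d \in m^\ast \sheq y z\}$, item (iv) via $\{b : n^\ast b \in (nm)^\ast \sheq x y\}$, and item (vi) via $K = \{d \in B : b \wedge d \in n^\ast \sheq x y\}$. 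Those inline arguments are exactly the proofs of your meet-preservation, functoriality and relativization claims, specialised to the ideals at hand; so your route does not avoid that work, it relocates it into the deferred frame-endomorphism lemma, which you only sketch --- though the sketch (preimages of $\J$-closed ideals under a Grothendieck Boolean homomorphism are $\J$-closed, applied to nested generation) is the right one and does go through. What your packaging buys is modularity and a transparent explanation of why this is ``a straightforward exercise in locale theory'': the structural facts about $m^\ast$ on $\mathrm{Idl}_\J(B)$ are reusable, e.g.\ they are what make the target map in Definition~\ref{def:26} well defined. What it costs is infrastructure: you need $\mathrm{Idl}_\J(B)$ to be a frame --- in particular the distributive law invoked in (vi) --- which this paper never verifies (it is only implicit in Definition~\ref{def:26}), whereas the paper's ideal $K$ in (vi) sidesteps distributivity entirely, keeping the proof elementary and self-contained at the cost of making its locale-theoretic content invisible.
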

\begin{proof}
  $\equiv_b^m$ is reflexive and symmetric since
  $m^\ast \sheq x x = m^\ast B = B$ and
  $m^\ast \sheq x y = m^\ast \sheq y x$. For transitivity we proceed
  in stages:
  \begin{enumerate}[(a)]
  \item If $x\equiv_b y$ and $y \equiv_c z$, then
    $x \equiv_{b \wedge c} z$ and so
    $m^\ast(b \wedge c) = m^\ast b \wedge m^\ast c \in
    m^\ast \sheq x z$;
  \item If $x \equiv_b y$, we may consider the $\J$-closed ideal
    $I = \{d \in B : m^\ast b \wedge d \in m^\ast \sheq x z\}$. By
    (a), each $m^\ast c$ with $y \equiv_c z$ is in $I$ and so
    $m^\ast \sheq y z \subseteq I$.
  \item Consider the $\J$-closed ideal $J = \{e \in B : e \wedge d \in
    m^\ast \sheq x z \,\forall d \in m^\ast \sheq y z\}$. By (b), $J$
    contains $m^\ast b$ whenever $x \equiv_b y$ and so $m^\ast \sheq x
    y \subseteq J$.
  \end{enumerate}
  But (c) says that $x \equiv^m_b y$ and $y \equiv^m_c z$ imply
  $x \equiv^m_{b \wedge c} z$, whence each $\equiv^m_b$ is
  transitive.

  Now, conditions (i)--(iii) simply say that each $m^\ast \sheq x y$
  is a closed ideal. For (iv), note that
  $\{b : n^\ast b \in (nm)^\ast \sheq x y\}$ is a closed $\J$-ideal
  which contains the set $\{m^\ast b : x \equiv_b y\}$, and so contains
  $m^\ast \sheq x y$. (v) follows similarly starting from the
  $\J$-closed ideal $\{b : m \cdot x \equiv_b m \cdot y\}$. Finally,
  for (vi), it suffices by symmetry to show that
  $(c \in m^\ast \sheq x y \text{ and } c \leqslant b)$ implies $c \in
  n^\ast \sheq x y$; or equivalently, that
  $c \in m^\ast \sheq x y$ implies $b \wedge c \in n^\ast \sheq x y$.
  But we observe that the $\J$-closed ideal
  $K = \{d \in B : b \wedge d \in n^\ast \sheq x y\}$ contains
  $m^\ast c$ whenever $x \equiv_c y$, since $m \equiv_b n$ implies
  $b \wedge m^\ast c = b \wedge n^\ast c \leqslant n^\ast c \in n^\ast
  \sheq x y$; whence $m^\ast \sheq x y \subseteq K$ as desired.
\end{proof}

The discussion above now suggests taking $M \otimes_B X$ to be
$M \times X$ with the free $M$-action and the $\BJ$-set equalities
$(m,x) \equiv_b (n,y)$ iff $m \equiv_b n$ and $x \equiv^m_b y$
(equivalently, $x \equiv^n_b y$ by part (vi) of the previous lemma).
One immediate problem is that $\equiv_1$ with this definition need not
be the identity; so we had better quotient out by it. That is, we
refine our first guess by taking
$M \otimes_B X = \{(m, x) : m \in M, x \in X \quot \equiv^m_1\}$ under
the $M$-action and $\BJ$-set equalities described above. If we work
this through, we get all of the necessary axioms for a $\BJM$-set
\emph{except for} the condition that, for any partition $P \in \J$ and
family of elements $(m_b, x_b)$ indexed by $b \in P$, there should be
an element $(n, z)$ with $(n,z)\equiv_b (m_b, x_b)$ for all $b \in B$.
In the first component there is no problem: we use the $\BJ$-set
structure of $M$. However, in the second component, we must formally
adjoin the missing elements, while accounting for the ones which do
already exist; and we can do so by replacing $X$ by the $\BJ$-set of
distributions $T_{\BJ} X$ and quotienting appropriately. This
motivates:

\begin{Prop}
  \label{prop:31}
  Let $\BJM$ be a Grothendieck matched pair of algebras. The
  forgetful functor
  $U \colon \BJM\text-\cat{Set} \rightarrow \BJ\text-\cat{Set}$
  has a left adjoint $M \otimes_B (\thg)$, whose value $M \otimes_B X$
  at a $\BJ$-set $X$ is given by the quotient of the free
  $\BJM$-set $M \times T_{\BJ} X$ by the $\BJM$-set
  congruence $\sim$ for which 
  \begin{equation*}
    (m, \omega) \sim (n, \gamma) \quad \iff \quad m = n \text{ and } x \equiv^m_{\omega(x) \wedge \gamma(y)} y \text{ for all $x, y \in X$.}
  \end{equation*}
\end{Prop}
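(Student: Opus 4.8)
The plan is to exploit the fact, recalled just above, that $M \times T_{\BJ}X$ is the free $\BJM$-set on the underlying \emph{set} of $X$, and to obtain $M \otimes_B X$ by quotienting it so as to force the unit $\eta_X \colon x \mapsto [1,\pi_x]$ to become a homomorphism of $\BJ$-sets. Concretely, I would proceed in three stages: first show that $\sim$ is a $\BJM$-set congruence, so that $M \otimes_B X = (M \times T_{\BJ}X)/\mathord\sim$ is a $\BJM$-set; then check that $\eta_X$ is a $\BJ$-set homomorphism; and finally verify the adjunction by establishing the universal property of $\eta_X$.

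For the first stage, the relation $\sim$ is well-posed once one knows the $\equiv^m_b$ are equivalence relations (Lemma~\ref{lem:12}). Reflexivity holds because, for a distribution $\omega$, the elements $\{\omega(x)\}$ are pairwise disjoint, so $\omega(x)\wedge\omega(y)=0$ for $x\neq y$ and $x \equiv^m_0 y$ always, while $x \equiv^m_{\omega(x)} x$ by reflexivity; symmetry is immediate from symmetry of $\equiv^m_b$ and of $\wedge$. Transitivity, and compatibility with the operations, all follow the same pattern: insert the intermediate data, restrict each relation to the common refinement partition $\{\omega(x) \wedge \gamma(z) \wedge \delta(y)\}^- \in \J$ (which lies in $\J$ by axiom (i) for a zero-dimensional topology), and re-glue using Lemma~\ref{lem:12}(iii). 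Compatibility with the $M$-action uses Lemma~\ref{lem:12}(iv) and the fact that each $m^\ast$ is a Grothendieck Boolean homomorphism; compatibility with the binary operation $b(\thg,\thg)$ and the infinitary operations $P(\thg)$, computed in the product $\BJM$-set $M \times T_{\BJ}X$ whose $\BJ$-structure is pointwise, additionally uses Lemma~\ref{lem:12}(vi) to pass between $\equiv^{b(m,n)}_c$ and $\equiv^m_c$, $\equiv^n_c$ on the pieces $c \leqslant b$ and $c \leqslant b'$.

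The crucial computation for the remaining stages is the identification of homomorphisms out of $M \times T_{\BJ}X$. I would first observe that in $M \times T_{\BJ}X$ one has $(m,\omega) \equiv_{\omega(x)} (m,\pi_x)$ for every $x$, so that $(m,\omega)$ is the gluing of the elements $(m,\pi_x)$ over the partition $\{\omega(x)\} \in \J$; consequently $\{(1,\pi_x) : x \in X\}$ generates $M \times T_{\BJ}X$, and the unique $\BJM$-set homomorphism $\tilde f \colon M \times T_{\BJ}X \rightarrow Y$ extending a function $f \colon X \rightarrow UY$ sends $(m,\omega)$ to the gluing of the elements $m \cdot f(x)$ over $\{\omega(x)\}$ (note: over $\{\omega(x)\}$, \emph{not} over $\{m^\ast\omega(x)\}$, since $(m,\omega)$ is not $m \cdot (1,\omega)$). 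That $\eta_X$ is a $\BJ$-homomorphism then reduces, via Proposition~\ref{prop:2}, to checking that $x \equiv_b y$ implies $[1,\pi_x] \equiv_b [1,\pi_y]$ in the quotient, which unwinds to exactly the defining clause of $\sim$ with $m = 1$. For the universal property I must show that $\tilde f$ respects $\sim$ whenever $f$ is a $\BJ$-homomorphism: given $(m,\omega) \sim (m,\gamma)$, on each piece $e_{xy} = \omega(x) \wedge \gamma(y)$ of the partition $\{e_{xy}\}^- \in \J$ we have $\tilde f(m,\omega) \equiv_{e_{xy}} m \cdot f(x)$ and $\tilde f(m,\gamma) \equiv_{e_{xy}} m \cdot f(y)$; since $f$ preserves each $\equiv_c$, the hypothesis $x \equiv^m_{e_{xy}} y$ gives $f(x) \equiv^m_{e_{xy}} f(y)$, whence $m \cdot f(x) \equiv_{e_{xy}} m \cdot f(y)$ by Lemma~\ref{lem:12}(v); gluing over $\{e_{xy}\}^-$ via Proposition~\ref{prop:2}(iii) yields $\tilde f(m,\omega) = \tilde f(m,\gamma)$. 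Thus $\tilde f$ descends to a $\BJM$-homomorphism $g \colon M \otimes_B X \rightarrow Y$ with $Ug \circ \eta_X = f$, unique because $\eta_X(X)$ generates $M \otimes_B X$; naturality in $Y$ is routine, giving the adjunction.

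I expect the main obstacle to be the first stage — verifying that $\sim$ is a congruence, and in particular that it is transitive and compatible with the infinitary operations $P(\thg)$ — since this is where the interplay between the relations $\equiv^m_b$, the restriction-and-reglue arguments of Lemma~\ref{lem:12}, and the zero-dimensional topology axioms is most involved. By contrast, once $\tilde f(m,\omega)$ is correctly recognised as the gluing of $m \cdot f(x)$ over $\{\omega(x)\}$, the well-definedness of the universal map and the remaining verifications are short.
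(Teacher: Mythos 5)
Your proposal is correct and takes essentially the same approach as the paper's proof: show that $\sim$ is a $\BJM$-set congruence via Lemma~\ref{lem:12} and refinement-partition arguments, show the unit is a $\BJ$-set homomorphism, and descend the free extension through the quotient using Lemma~\ref{lem:12}(v) and gluing via Proposition~\ref{prop:2}(iii) --- indeed, your explicit identification of $\tilde f(m,\omega)$ as the gluing of the elements $m \cdot f(x)$ over $\{\omega(x)\}$ is exactly what the paper uses implicitly when it writes $\bar f(m,\omega) \equiv_{\omega(x)} m \cdot f(x)$. The only cosmetic divergence is that you check the unit preserves the relations $\equiv_b$ whereas the paper checks it preserves the operations $b({\thg},{\thg})$; these are equivalent formulations by Proposition~\ref{prop:2}.
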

\begin{proof}
  We first show $\sim$ is an equivalence relation. Symmetry is clear.
  For reflexivity, if $x \neq y \in X$ then
  $\omega(x) \wedge \omega(y) = 0$ and so
  $x \equiv^m_{\omega(x) \wedge \omega(y)} y$ is always true. For
  transitivity, suppose
  $(m, \omega) \sim (m, \gamma) \sim (m, \delta)$. We must show
  $(m, \omega) \sim (m, \delta)$, i.e.
  $x \equiv^m_{\omega(x) \wedge \delta(z)} z$ for all $x,z \in X$. Now
  $\{\omega(x) \wedge \gamma(x) \wedge \delta(z) : y \in Y\}^-$ is in
  $\J_{\omega(x) \wedge \delta(z)}$ so by Lemma~\ref{lem:12}(iii) it
  suffices to check
  $x \equiv^m_{\omega(x) \wedge \gamma(y) \vee \delta(z)} z$ which
  follows from $x \equiv^m_{\omega(x) \wedge \gamma(y)} y$ (as $(m,
  \omega) \sim (m, \gamma)$) and
  $y \equiv^m_{\gamma(y) \wedge \delta(z)} z$ (as $(m, \gamma) \sim (m, \delta)$).

  We now show $\sim$ is a congruence. For the $M$-set structure, if
  $(m, \omega) \sim (m, \delta)$, i.e.,
  $x \equiv^m_{\omega(x) \wedge \gamma(y)} y$ for all $x,y \in X$,
  then $x \equiv^{nm}_{n^\ast \omega(x) \wedge n^\ast \gamma(y)} y$ by
  Lemma~\ref{lem:12}(iv), whence
  $(nm, n^\ast \circ \omega) \sim (nm, n^\ast \circ \gamma)$. For the
  $\BJ$-set structure, let $P \in \J$ and suppose
  $(m_b, \omega_b) \sim (m_b, \gamma_b)$ for all $b \in B$, i.e.,
  \begin{equation}\label{eq:42}
    x \equiv^{m_b}_{\omega_b(x) \wedge \gamma_b(y)} y \text{ for all } x,y \in X\rlap{ .}
  \end{equation}
  We must show that $(P(m), P(\omega))
  \sim (P(m), P(\gamma))$, i.e., that
  \begin{equation*}
    x \equiv^{P(m)}_{\bigvee_{b} (b \wedge \omega_b(x) \wedge \gamma_b(y))} y \text{ for all } x,y \in X\rlap{ .}
  \end{equation*}
  For this, it suffices by Lemma~\ref{lem:12}(iii) to show
  $x \equiv^{P(m)}_{b \wedge \omega_b(x) \wedge \gamma_b(y)} y$ for
  all $x,y \in X$ and $b \in P$; but since $P(m) \equiv_b m_b$, this
  is equally by Lemma~\ref{lem:12}(vi) to show that
  $x \equiv^{m_b}_{b \wedge \omega_b(x) \wedge \gamma_b(y)} y$ for all
  $x,y \in X$ and $b \in P$; which follows from~\eqref{eq:42} via
  Lemma~\ref{lem:12}(ii).
  So $\sim$ is a congruence and we can form
  the $\BJM$-set $M \otimes_B X = (M \times T_{\BJ} X) / \sim$. We now
  show that the composite map
  \begin{equation}
    \label{eq:43}
    \eta \defeq X \xrightarrow{\ \ \ } M \times T_{\BJ}X \xrightarrow{\ q \ } M \otimes_B X
  \end{equation}
  exhibits $M \otimes_B X$ as the free $\BJM$-set on the $\BJ$-set
  $X$; here, the first part is the free morphism
  $X \rightarrow M \times T_{\BJ} X$ sending $x \mapsto (1, \pi_x)$,
  and the second part is the quotient map for $\sim$.

  First of all, this map is a $\BJ$-set homomorphism, since if
  $x,y \in X$ and $b \in B$, then
  $(1, \pi_{b(x,y)}) \sim (1, b(\pi_{x}, \pi_{y}))$ in
  $M \times T_{\BJ}X$; for indeed, the only non-trivial cases for
  $\sim$ are that
  $b(x,y) \equiv^1_{1 \wedge b} x$ and $b(x,y) \equiv^1_{1 \wedge
    b'} y$, which simply says that $b(x,y) \equiv_b x$ and $b(x,y)
  \equiv_{b'} y$, which is so by definition of $b(x,y)$.

  Moreover, if $f \colon X \rightarrow Y$ is a $\BJ$-set homomorphism,
  then we have a unique extension along $\eta$ to a $\BJM$-set
  homomorphism $\bar f \colon M \times T_{\BJ} X \rightarrow Y$. To
  complete the proof, it suffices to show this extension factors
  through $q$. So suppose $(m, \omega) \sim (m, \gamma)$ in
  $M \times T_{\BJ} X$. We have that
  $\bar f(m, \omega) \equiv_{\omega(x)} m \cdot f(x)$ and
  $\bar f(m, \gamma) \equiv_{\gamma(y)} m \cdot y(x)$ for all
  $x,y \in X$; and since $x \equiv^m_{\omega(x) \wedge \gamma(y)} y$
  we have
  $m \cdot f(x) \equiv_{\omega(x) \wedge \gamma(y)} m \cdot f(y)$ by
  Lemma~\ref{lem:12}(v). Thus
  $\bar f(m, \omega) \equiv_{\omega(x) \wedge \gamma(y)} m \cdot f(x)
  \equiv_{\omega(x) \wedge \gamma(y)} m \cdot f(y) \equiv_{\omega(x)
    \wedge \gamma(y)} \bar f(m, \gamma)$, and joining over $x$ and $y$
  gives $\bar f(m, \omega) = \bar f(m, \gamma)$ as desired.
\end{proof}

We are now in a position to analyse when the forgetful functor
$U \colon \BJM\text-\cat{Set} \rightarrow \BJ\text-\cat{Set}$ is
cartesian closed. Spelling it out, we see that the condition in
Definition~\ref{def:30} is equivalent to asking that, for all
$\BJM$-sets $X$, $Y$, the function
\begin{align*}
  \BJM\text-\cat{Set}(M \times X,Y) &\rightarrow \BJ\text-\cat{Set}(X, Y) &
  f & \mapsto f(1, \thg)
\end{align*}
is invertible. Thus, $U$ is cartesian closed just if, whenever $X,Y$ are
$\BJM$-sets, each $\BJ$-set map $g \colon X \rightarrow Y$ extends
uniquely to a $\BJM$-set map $M \times X \rightarrow Y$ along the
$\BJ$-set homomorphism $\gamma \colon X \rightarrow M \times X$
sending $x$ to $(1,x)$; in other words, if $\gamma$ exhibits
$M \times X$ as the free $\BJM$-set on the $\BJ$-set $UX$. However,
since we already know that the $\BJ$-set homomorphism
$\eta \colon X \rightarrow M \otimes_B X$ of~\eqref{eq:43} exhibits
$M \otimes_B X$ as the free $\BJM$-set on $UX$, this is equally to say
that the unique extension $M \otimes_B X \rightarrow M \times X$ of
$\gamma$ to a $\BJM$-set homomorphism, as described in the proof
of~Proposition~\ref{prop:31}, is \emph{invertible}. We record this as:
\begin{Lemma}
  \label{lem:13}
  Let $\BJM$ be a Grothendieck matched pair of algebras. The forgetful
  functor
  $U \colon \BJM\text-\cat{Set} \rightarrow \BJ\text-\cat{Set}$ is
  cartesian closed if, and only if, for each $\BJM$-set $X$, the
  function:
  \begin{equation}
    \label{eq:44}
    \begin{aligned}
      \theta_X \colon M \otimes_B X &\rightarrow M \times X \\ (m, \omega) &\mapsto (m,
      \varepsilon_m(\omega))
    \end{aligned}
  \end{equation}
  is invertible, where $\varepsilon_m(\omega)$ is characterised by  $\varepsilon_m(\omega) \equiv_{\omega(x)} m \cdot x$
  for all $x \in \mathrm{supp}(\omega)$.
\end{Lemma}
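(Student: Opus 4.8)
The plan is to build directly on the reduction carried out in the discussion preceding the statement: there it is already observed that $U$ is cartesian closed precisely when, for every $\BJM$-set $X$, the unique $\BJM$-set homomorphism $\bar\gamma \colon M \otimes_B X \rightarrow M \times X$ extending the $\BJ$-set homomorphism $\gamma \colon x \mapsto (1,x)$ along the unit $\eta$ of Proposition~\ref{prop:31} is invertible. Here $M \times X$ carries the \emph{product} $\BJM$-set structure, with $M$-action $n \cdot (m,x) = (nm, n \cdot x)$ and product $\BJ$-set structure for which $(p,u) \equiv_b (q,v)$ iff $p \equiv_b q$ and $u \equiv_b v$. Thus the whole content of the lemma is to identify this extension $\bar\gamma$ with the explicit map $\theta_X$ of~\eqref{eq:44}; the claimed equivalence is then immediate.

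First I would confirm that $\varepsilon_m(\omega)$ is well-defined. Since $\omega \in T_{\BJ}X$ is a distribution, Definition~\ref{def:10} says that $\omega$ restricts to a bijection from $\mathrm{supp}(\omega)$ onto a partition $P = \{\omega(x) : x \in \mathrm{supp}(\omega)\} \in \J$; so Proposition~\ref{prop:2}(ii)$'$, applied to $P$ and the family $(m \cdot x)_{x \in \mathrm{supp}(\omega)}$ indexed by it, produces a unique $z \in X$ with $z \equiv_{\omega(x)} m \cdot x$ for all $x \in \mathrm{supp}(\omega)$, which is exactly $\varepsilon_m(\omega)$.

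Next I would compute $\bar\gamma$ on a representative $(m,\omega) \in M \times T_{\BJ}X$. Recall from the proof of Proposition~\ref{prop:31} that the extension along $\eta$ of a $\BJ$-set homomorphism $f$ is characterised by $\bar f(m,\omega) \equiv_{\omega(x)} m \cdot f(x)$ for all $x$. Taking $f = \gamma$, so that $m \cdot \gamma(x) = m \cdot (1,x) = (m, m \cdot x)$, this reads $\bar\gamma(m,\omega) \equiv_{\omega(x)} (m, m \cdot x)$ for all $x$. Unfolding the product $\BJ$-set structure on $M \times X$: writing $\bar\gamma(m,\omega) = (a,w)$, we obtain $a \equiv_{\omega(x)} m$ and $w \equiv_{\omega(x)} m \cdot x$ for every $x \in \mathrm{supp}(\omega)$. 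The first relation holds for all $b \in P$, so $a = m$ by Proposition~\ref{prop:2}(iii),(ii), while the second is precisely the defining property of $\varepsilon_m(\omega)$, so $w = \varepsilon_m(\omega)$. Hence $\bar\gamma(m,\omega) = (m, \varepsilon_m(\omega)) = \theta_X(m,\omega)$, and since $\bar\gamma$ factors through the quotient $M \otimes_B X$, this simultaneously shows $\theta_X$ descends to $\sim$-classes and coincides with $\bar\gamma$, completing the proof.

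I do not anticipate a serious obstacle: after the reduction above the argument is essentially bookkeeping, the one genuine subtlety being to keep distinct the two $M$-actions in play (the free action on $M \times T_{\BJ}X$ and the product action on $M \times X$) together with their respective $\BJ$-set structures. The only point that might seem to demand separate work---that $\varepsilon_m(\omega)$ respects the congruence $\sim$ defining $M \otimes_B X$---is best finessed exactly as above, by reading $\theta_X$ off as the canonical extension $\bar\gamma$, so that well-definedness on $\sim$-classes is inherited for free; a direct verification would instead invoke Lemma~\ref{lem:12}(v).
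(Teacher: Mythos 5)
Your proof is correct and takes essentially the same route as the paper: the reduction to invertibility of the canonical comparison map $M \otimes_B X \rightarrow M \times X$ is precisely the discussion the paper records the lemma from, and your computation identifying that map with $\theta_X$---using the characterisation $\bar f(m,\omega) \equiv_{\omega(x)} m \cdot f(x)$ from the proof of Proposition~\ref{prop:31} together with Proposition~\ref{prop:2}---just makes explicit what the paper leaves implicit in the phrase ``as described in the proof of Proposition~\ref{prop:31}''. Your remark that well-definedness of $\theta_X$ on $\sim$-classes is inherited from this identification (rather than needing a separate check via Lemma~\ref{lem:12}(v)) likewise matches the paper's treatment.
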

We are now in a position to complete the proof of Theorem~\ref{thm:9}
by showing:
\begin{Prop}
  \label{prop:42}
  Let $\BJM$ be a Grothendieck matched pair of algebras. The
  following are equivalent:
  \begin{enumerate}[(i)]
  \item The forgetful functor $U \colon \BJM\text-\cat{Set}
    \rightarrow \BJ\text-\cat{Set}$ is cartesian closed;
  \item For all $m \in M$, there exists $\{b_i : i \in I\} \in \J$ and
    a family $(n_i \in M : i \in I)$ with 
    $mn_i \equiv_{b_i} 1$ and $n_i m \equiv^m_{b_i} 1$ for all $i$.
  \item For all $m \in M$, there exists
    $\{b_i : i \in I\} \in \J$ and families
    $(n_i \in M : i \in I)$ and $(c_i \in B : i \in I)$ with
    $b_i \leqslant m^\ast c_i$, $mn_i \equiv_{b_i} 1$ and
    $n_i m \equiv_{c_i} 1$ for all $i$.
\end{enumerate}
\end{Prop}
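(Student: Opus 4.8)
The plan is to prove $\text{(ii)} \Leftrightarrow \text{(iii)}$ first, as this is essentially bookkeeping, and then to settle the substantive equivalence $\text{(i)} \Leftrightarrow \text{(ii)}$ by analysing, via Lemma~\ref{lem:13}, exactly when the comparison maps $\theta_X$ are invertible. For $\text{(iii)} \Rightarrow \text{(ii)}$ I would simply invoke Lemma~\ref{lem:12}: from $n_i m \equiv_{c_i} 1$ we get $n_i m \equiv^m_{m^\ast c_i} 1$ by part~(i), and since $b_i \leqslant m^\ast c_i$, part~(ii) upgrades this to $n_i m \equiv^m_{b_i} 1$, which with the given $m n_i \equiv_{b_i} 1$ is exactly~(ii). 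For the converse I would unfold each $n_i m \equiv^m_{b_i} 1$ using the concrete description in Remark~\ref{rk:6}, obtaining for each $i$ a partition $\{b_{ij}\}_j \in \J_{b_i}$ and elements $c_{ij} \in B$ with $b_{ij} \leqslant m^\ast c_{ij}$ and $n_i m \equiv_{c_{ij}} 1$; the doubly-indexed family $\{b_{ij}\}_{i,j}$ is again a partition in $\J$ by axiom~(i) for a zero-dimensional topology, and taking $n_{ij} = n_i$ gives the data in~(iii), since $m n_{ij} \equiv_{b_{ij}} 1$ follows from $m n_i \equiv_{b_i} 1$ and $b_{ij} \leqslant b_i$ via Proposition~\ref{prop:2}(i).

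For $\text{(i)} \Leftrightarrow \text{(ii)}$ the key observation is that $\theta_X$ preserves the $M$-coordinate, so it is invertible precisely when, for each $m \in M$, the induced map on fibres $\Phi_m \colon [\omega] \mapsto \varepsilon_m(\omega)$ is a bijection onto $X$. I would prove $\text{(ii)} \Rightarrow \text{(i)}$ by showing each $\theta_X$ is bijective. For \emph{surjectivity}, given $(m,x)$ the relations $m n_i \equiv_{b_i} 1$ give $m \cdot (n_i \cdot x) \equiv_{b_i} x$ by~\eqref{eq:53}, so the distribution $\omega$ sending $n_i \cdot x \mapsto b_i$ (merging coincidences and discarding zeros) satisfies $\varepsilon_m(\omega) = x$. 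For \emph{injectivity}, if $\varepsilon_m(\omega) = \varepsilon_m(\gamma)$ then $m \cdot x \equiv_{\omega(x) \wedge \gamma(y)} m \cdot y$, and writing $c = \omega(x) \wedge \gamma(y)$ I must deduce $x \equiv^m_c y$, which by Lemma~\ref{lem:12}(iii) reduces to showing $x \equiv^m_{c \wedge b_i} y$ for each $i$. Here I would first establish the auxiliary transport fact — provable from Lemma~\ref{lem:12}(i)--(iii) and~\eqref{eq:53} — that $p \equiv^m_b q$ in $M$ implies $p \cdot x \equiv^m_b q \cdot x$ in $X$; applying it to $n_i m \equiv^m_{b_i} 1$ yields $n_i m \cdot x \equiv^m_{c \wedge b_i} x$ and likewise for $y$, while the matched-pair identity $b_i \wedge (m n_i)^\ast c = b_i \wedge c$ (valid since $m n_i \equiv_{b_i} 1$) shows $c \wedge b_i \leqslant (m n_i)^\ast c$, so that Lemma~\ref{lem:12}(i),(ii) supply the middle link $n_i m \cdot x \equiv^m_{c \wedge b_i} n_i m \cdot y$; transitivity of $\equiv^m$ then closes the chain.

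For $\text{(i)} \Rightarrow \text{(ii)}$ I would test invertibility of $\theta_M$, with $M$ regarded as a $\BJM$-set via its canonical structure. Surjectivity at $(m,1)$ produces $\omega \in T_{\BJ}(M)$ with $\varepsilon_m(\omega) = 1$; its support $\{n_i\}$ and values $b_i = \omega(n_i)$ form a partition in $\J$ with $m n_i \equiv_{b_i} 1$, giving the first half of~(ii). For the second half I would exploit injectivity of $\theta_M$ by comparing the point mass $\gamma' = \pi_1$, for which $\varepsilon_m(\gamma') = m$, with the finite distribution $\omega'$ sending $n_i m \mapsto b_i$ and $1 \mapsto b_i'$: since $m n_i \equiv_{b_i} 1$ forces $\varepsilon_m(\omega') = m = \varepsilon_m(\gamma')$, injectivity gives $(m, \omega') \sim (m, \gamma')$, and reading off the defining condition of $\sim$ at the pair $(n_i m, 1)$ yields exactly $n_i m \equiv^m_{b_i} 1$.

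The main obstacle I anticipate is the injectivity half of $\text{(ii)} \Rightarrow \text{(i)}$: transporting the relation $n_i m \equiv^m_{b_i} 1$ from $M$ to an arbitrary $\BJM$-set $X$ and assembling the correct chain of $\equiv^m$-equivalences depends on the auxiliary transport fact and a careful use of the third matched-pair axiom, all conducted in the $\J$-closed ideals underlying $\equiv^m$. By contrast, surjectivity of $\theta_X$ and the extraction of~(ii) from~(i) are comparatively direct once the right distributions are written down, and $\text{(ii)} \Leftrightarrow \text{(iii)}$ is purely formal.
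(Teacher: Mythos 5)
Your proposal is correct and follows essentially the same route as the paper's proof: the same extraction of the data $\{b_i\}, (n_i)$ from surjectivity of $\theta_M$ at $(m,1)$, the same two-point distribution (the paper's $b_i(\pi_{n_i m},\pi_1)$ is exactly your $\omega'$), the same transport fact $m^\ast\sheq{n_i m}{1} \subseteq m^\ast\sheq{n_i m \cdot x}{x}$, the same chain $x \equiv^m_{c \wedge b_i} n_i m\cdot x \equiv^m_{c\wedge b_i} n_i m\cdot y \equiv^m_{c\wedge b_i} y$ resting on the matched-pair identity $b_i \wedge (m n_i)^\ast c = b_i \wedge c$ and Lemma~\ref{lem:12}(iii), and an identical treatment of (ii) $\Leftrightarrow$ (iii) via Remark~\ref{rk:6}. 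The only difference is organizational: the paper isolates the biconditional $x \equiv^m_b y \iff m \cdot x \equiv_b m \cdot y$ (its~\eqref{eq:45}) and proves it once in each direction, whereas you inline the two halves of that equivalence into the injectivity and surjectivity checks for $\theta_X$ and $\theta_M$.
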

\begin{proof}
  We first prove (i) $\Rightarrow$ (ii). So suppose $U$ is cartesian
  closed; we begin by showing that for any $\BJM$-set $X$, any
  $x,y \in X$ and any $b \in B$, we have
  \begin{equation}
    \label{eq:45}
    x \equiv^m_b y \iff m \cdot x \equiv_b m \cdot y\rlap{ .}
  \end{equation}
  Indeed, since $\theta_X$ is an isomorphism by Lemma~\ref{lem:13}, we have
  $\theta_X(m, \pi_x) \equiv_b \theta_X(m, \pi_{y})$ in $M \times X$
  just when $(m, \pi_x) \equiv_b (m, \pi_{y})$ in $M \otimes_B X$.
  Since $\theta_X(m, \pi_x) = (m,m \cdot x)$ and similarly for $y$,
  this is equally to say that $m \cdot x \equiv_b m \cdot y$ just when
  $(m, b(\pi_x, \pi_{y})) \sim (m, \pi_{y})$ in $M \times T_{\BJ}X$;
  which by definition of $\sim$ says exactly that $x \equiv^m_b y$.
  
  Now, since $U$ is cartesian closed, \eqref{eq:44} is in
  particular invertible when $X = M$. Thus for each $m \in M$, the
  element $(m,1)$ is in the image of $\theta_M$, and so there exists a
  distribution $\omega \colon M \rightarrow B$ such that
  $\varepsilon_m(\omega) = 1$, i.e., such that
  $1 \equiv_{\omega(n)} mn$ for all $n \in \mathrm{supp}(\omega)$.
  Writing $\{b_i : i \in I\}$ for the partition $(\im \omega)^-$ and
  $n_i \in M$ for the elements with $\omega(n_i) = b_i$, we thus have
  $\{b_i : i \in I\} \in \J$ and a family $(n_i \in M : i \in I)$
  such that $mn_i \equiv_{b_i} 1$ for all $i$. It follows that
  $mn_i m \equiv_{b_i} m$ for all $i$, and so by~\eqref{eq:45} that
  $n_i m \equiv^m_{b_i} 1$ for all $i \in I$. This gives (ii).

  We now show (ii) $\Rightarrow$ (i). We again begin by
  proving~\eqref{eq:45} for any $\BJM$-set $X$. The rightward
  implication is Lemma~\ref{lem:12}(v). As for the leftward one,
  suppose $m \cdot x \equiv_b m \cdot y$. By (ii), we find
  $\{b_i\} \in \J$ and $(n_i \in M : i \in I)$ such that
  $mn_i \equiv_{b_i} 1$ and $n_im \equiv^m_{b_i} 1$ for each $i$. Now
  the $\BJM$-set axioms for $X$ and Lemma~\ref{lem:12}(i) say that
  $m \cdot x \equiv_b m \cdot y \implies n_im \cdot x \equiv_{n_i^\ast
    b} n_im \cdot y \implies n_im \cdot x \equiv^m_{m^\ast n_i^\ast b}
  n_im \cdot y$ for each $i$; and since $mn_i \equiv_{b_i} 1$, we have
  $b_i \wedge m^\ast n_i^\ast b = b_i \wedge b$, and so for each $i$
  we have $n_i m \cdot x \equiv^m_{b \wedge b_i} n_i m \cdot y$. Now,
  since $\sheq {n_i m} 1 \leqslant \sheq {n_i m \cdot x} {x}$ by the
  $\BJM$-set axioms for $X$, also
  $m^\ast\sheq {n_i m} 1 \leqslant m^\ast\sheq {n_i m \cdot x} {x}$;
  whence $n_i m \equiv^m_{b_i} 1$ implies
  $n_i m \cdot x \equiv^m_{b_i} x$. Putting this together, we have
  $x\equiv^m_{b \wedge b_i} n_i m \cdot x \equiv^m_{b \wedge b_i} n_i
  m \cdot y \equiv^m_{b \wedge b_i} y$ for each $i$, whence
  $x \equiv^m_b y$ by Lemma~\ref{lem:12}(iii).

  We immediately conclude that each~\eqref{eq:44} is injective: for
  indeed, if $\theta_X(m, \omega) = \theta_M(n, \gamma)$, then $m = n$
  and $\varepsilon_m(\omega) = \varepsilon_m(\gamma)$, which says that
  $m \cdot x \equiv_{\omega(x) \wedge \gamma(y)} m \cdot y$ for each
  $x,y \in X$. By~\eqref{eq:45} this is equivalent to
  $x \equiv^m_{\omega(x) \wedge \gamma(y)} y$ for all
  $x,y \in X$---which is to say that $(m, \omega) = (n, \gamma)$ in
  $M \otimes_B X$. Finally, to show surjectivity of $\theta_X$,
  consider $(m,x) \in M \times X$, let $\{b_i\} \in \J$ and $(n_i \in M)$ 
  be as in (ii) for $m$, and let
  $\omega \colon X \rightarrow B$ be the distribution with
  $\omega(y) = \bigvee_{y = n_i \cdot x} b_i$. We claim 
  $\theta_X(m, \omega) = (m,x)$; for which we must show that
  $x \equiv_{\omega(y)} m \cdot y$ for all $y \in X$. This is equally
  to show $x \equiv_{b_i} m n_i \cdot x$ for all $x \in X$, which
  is so since $mn_i \equiv_{b_i} 1$ for each $i$.

  Finally, we prove (ii) $\Leftrightarrow$ (iii). Given $m \in M$ and
  the associated data $\{b_i\}$, $(c_i)$ and $(n_i)$ in (iii), we have
  by Lemma~\ref{lem:12}(i) and (ii) that
  $n_i m \equiv_{c_i} 1 \implies n_i m \equiv^m_{m^\ast c_i} 1
  \implies n_i m \equiv^m_{b_i} 1$ for each $i$: which gives the data
  needed for (ii). Conversely, given the data $\{b_i\}$ and $(n_i)$ as
  in (ii), since $n_i m \equiv^m_{b_i} 1$ for each $i$ we have by
  Remark~\ref{rk:6} partitions $\{b_{ij} : j \in J_i\} \in \J_{b_i}$
  and elements $(c_{ij} : i \in I, j \in J_i)$ such that
  $b_{ij} \leqslant m^\ast c_{ij} $ and $n_i m \equiv_{c_{ij}} 1$ for
  each $i \in I$ and $j \in J_i$. Thus taking the partition
  $\{b_{ij} : i \in I, j \in J_i\} \in \J$, the elements
  $(n_{ij} = n_i \in M : i \in I, j \in J_i)$ and the elements
  $(c_{ij} \in B : i \in I, j \in J_i)$ we obtain the required
  witnesses for~(iii).
\end{proof}

Of course, when the equivalent conditions of Theorem~\ref{thm:9} are
satisfied, the function-space $Z^Y$ in $\BJM\text-\cat{Set}$ is given by
the $\BJ$-set of $\BJ$-set homomorphisms $Y \rightarrow Z$, with a
suitable $M$-set structure. From the above proof we can extract a
direct description of this structure. Given $f \in Z^Y$ a $\BJ$-set homomorphism and $m \in M$ with associated
data $\{b_i\} \in \J$, $(c_i)$ and $(n_i)$ as above, the element $m
\cdot f \in Z^Y$ is characterised by
\begin{equation}
  \label{eq:58}
  (m \cdot f)(y) \equiv_{b_i} m \cdot f(n_i \cdot y) \quad \text{for all } i \in I\rlap{ ;}
\end{equation}
this is the natural generalisation of~\eqref{eq:57} above.

\section{J\'onsson--Tarski toposes}
\label{sec:jonss-tarski-topos}

We conclude this paper by discussing a range of examples of cartesian
closed varieties whose classifying categories are the kinds of ample
topological groupoids that are of interest to operator algebraists. In
this section, we describe cartesian closed varieties (in fact toposes)
which correspond to the \emph{Cuntz groupoids}
of~\cite{Renault1980A-groupoid}, whose corresponding $C^\ast$-algebras
are Cuntz $C^\ast$-algebras and whose corresponding $R$-algebras are
Leavitt algebras.

\subsection{The J\'onsson--Tarski topos}
\label{sec:jonsson-tarski-topos-1}
We begin with the simplest non-trivial case involving a binary
alphabet $\{\ell, r\}$, for which the appropriate variety will be the
so-called \emph{J\'onsson--Tarski topos}. A \emph{J\'onsson--Tarski
  algebra}~\cite{Jonsson1961On-two-properties} is a set $X$ endowed
with functions $\ell, r \colon X \rightarrow X$---which we write as
left actions $x \mapsto \ell \cdot x$ and $x \mapsto r \cdot x$---and
a function $m \colon X \times X \rightarrow X$ satisfying the axioms
\begin{equation}
  \label{eq:54}
  m(\ell \cdot x,r \cdot x) = x \qquad \ell \cdot m(x,y) = x \quad \text{and} \quad r \cdot m(x,y) = y\rlap{ .}
\end{equation}
These say that the functions $x \mapsto (\ell \cdot x,r \cdot x)$ and
$x,y \mapsto m(x,y)$ are inverse; so a
J\'onsson--Tarski algebra is equally well a set $X$ with an
isomorphism $X \cong X \times X$.

The concrete category $\J\T$ of J\'onsson--Tarski algebras is a
non-degenerate finitary variety, but also, as observed by Freyd, a
topos; indeed, as explained in~\cite[Example~1.3]{Johnstone1985When},
it can be presented as the topos of sheaves on the free monoid
$A^\ast$---where $A$ denotes the two-letter alphabet
$\{\ell, r\}$---for the topology generated by the covering family
$\{\ell, r\}$. In particular, $\J\T$ is cartesian closed and so via
Proposition~\ref{prop:44} can be presented as a category of
$\BM$-sets.

When we calculate $B$ and $M$, it will turn out that, on the one hand,
$B$ is the Boolean algebra of clopen sets of Cantor space $C$ which,
because of our conventions, it will be best to think of as the set
$\{\ell, r\}^{-\omega}$ of words $W = \cdots a_2 a_1 a_0$ in $\ell,r$
which extend infinitely to the \emph{left}. On the other hand, $M$
will be the monoid of those (continuous) endomorphisms
$\varphi \colon C \rightarrow C$ which are specified by finite words
$u_1, \dots, u_k, v_1, \dots, v_k \in A^{\ast}$ via the formula:
\begin{equation}
  \label{eq:60}
  \left.
  \begin{aligned}
    \varphi(Wu_1) &= Wv_1 \\*[-0.3\baselineskip]
    & \  \ \vdots \\
    \varphi(Wu_k) &= Wv_k
  \end{aligned}\  \right\} \quad  \text{for all } W \in A^{-\omega}\rlap{ ;}
\end{equation}
i.e., $\varphi$ maps the clopen set $[u_i]$ of words starting with
$u_i$ affinely to the clopen set $[v_i]$. (Although our infinite words
extend to the \emph{left}, we still think of them as \emph{starting}
with their rightmost segments $a_n \cdots a_0$). The \emph{invertible}
elements of this monoid comprise Thompson's group $V$, and so it is no
surprise that $M$ is already known as a monoid generalisation of $V$;
in the nomenclature of~\cite{Birget2009Monoid}, it is the
``Thompson--Higman total function monoid $\mathit{tot}M_{2,1}$''.

Now, by Proposition~\ref{prop:44}, we can compute $M$ and $B$ as
$\J\T(F1,F1)$ and $\J\T(F1, 1+1)$, where $F1$ is the free
J\'onsson--Tarski algebra on one generator. The obvious way to find
these would be via a universal-algebraic description of $F1$ and
$1+1$; this was the approach of Higman in~\cite{Higman1974Finitely},
who used it to show that $\mathrm{Aut}(F1,F1) \cong V$.
However~\cite{Birget2009Monoid} follows a combinatorially smoother
approach due to~\cite{Scott1984Construction}, which describes $V$ and
its monoid generalisations in terms of certain morphisms between
ideals of the monoid $A^\ast$. As we now show, there is a direct
derivation of this perspective which exploits the nature of $\J\T$ as
a topos of sheaves on $A^\ast$. Again, due to our conventions it will
be best if we work with \emph{left}, rather than \emph{right},
$A^\ast$-sets; this is harmless due to the anti-homomorphism
$A^\ast \rightarrow A^\ast$ which reverses each word.

Thus $\J\T$ is related to the
category of left $A^\ast$-sets by adjunctions:
\begin{equation}
  \label{eq:56}
  \cd{
    {\J\T} \ar@<-4.5pt>[r]_-{I_2} \ar@{<-}@<4.5pt>[r]^-{L_2} \ar@{}[r]|-{\bot} &
    {{(A^\ast\text-\cat{Set})_{\mathrm{sep}}} } \ar@<-4.5pt>[r]_-{I_1} \ar@{<-}@<4.5pt>[r]^-{L_1} \ar@{}[r]|-{\bot} &
    {A^\ast\text-\cat{Set}}\rlap{ ,}
  }
\end{equation}
where $(A^\ast\text-\cat{Set})_{\mathrm{sep}}$ is the category of
\emph{separated} left $A^\ast$-sets for the Grothendieck topology on
$A^\ast$; concretely, $X$ is separated if $x = y$ whenever
$\ell \cdot x = \ell \cdot y$ and $r \cdot x = r \cdot y$. The free
separated $A^\ast$-set $L_1(X)$ on an $A^\ast$-set $X$ is
$X \quot \sim$, where $\sim$ is the smallest equivalence relation that
relates $x$ and $y$ whenever $\ell \cdot x = \ell \cdot y$ and
$r \cdot x = r \cdot y$. As for the left-hand adjunction
in~\eqref{eq:56}, we may by~\cite[Theorems~43.6 and
45.8]{Wyler1991Lecture} see $L_2$ as the functor which formally
inverts the class of \emph{dense inclusions} for the Grothendieck
topology on $A^\ast$, which we can make explicit as
follows:
\begin{Defn}
  \label{def:37}
  Let $X$ be a left $A^\ast$-set and $U \leqslant X$ a
  sub-$A^\ast$-set. We say:
  \begin{itemize}
  \item $U$ is \emph{closed} in $X$ (written $U \leqslant_c X$) if
    $\ell \cdot x \in U, r \cdot x \in U \implies x \in U$;
  \item $U$ is \emph{dense} in $X$ (written $U \leqslant_d X$) if the
    closure of $U$ in $X$ is $X$.
  \end{itemize}
Here, the closure of $U$ in $X$ is, of course, the smallest closed
$U' \leqslant X$ which contains $U$; and it is not
hard to see that it can be described explicitly as:
\begin{equation}
  \label{eq:55}
  U' = \{\,x \in X : \text{there exists } n \in \mathbb{N} \text{ with } w \cdot x \in U \text{ for all } w \in A^n\,\}\rlap{ .}
\end{equation}
\end{Defn}

Now, since the class of dense inclusions in
$(A^\ast\text-\cat{Set})_\mathrm{sep}$ is closed under composition and
under inverse image along any $A^\ast$-set homomorphism, the result of
formally inverting them can be described via a \emph{category of
  fractions}~\cite{Gabriel1967Calculus}. This is to say that $\J\T$ is
equivalent to the category $\J\T'$ wherein:
\begin{itemize}
\item \textbf{Objects} are separated left $A^\ast$-sets;
\item \textbf{Morphisms} $X \rightarrow Y$ are $\sim$-equivalence
  classes of dense partial $A^\ast$-set maps, i.e., pairs
  $(U \leqslant_d X, f \colon U \rightarrow Y)$ where $f$ is an
  $A^\ast$-set homorphism. Here, $(U, f) \sim (V, g)$ when they have a
  lower bound in the inclusion ordering $\sqsubseteq$, i.e., the
  ordering with $(U',f') \sqsubseteq (U,f)$ when $U' \leqslant U$ and
  $f' = \res f {U'}$;
\item The \textbf{composite} of $(U,f) \colon X \rightarrow Y$ and
  $(V,g) \colon Y \rightarrow Z$ is their composite as partial maps,
  namely, $(f^{-1}(V), \lambda x.\,gfx) \colon X \rightarrow Z$; and 
\item The \textbf{identity} on $X$ is $(X, 1_X)$,
\end{itemize}
In fact, we can simplify the description of $\J\T'$ further, due to
the following result; this is really a general argument about dense
and closed monomorphisms with respect to a Grothendieck topology, but
we give a concrete proof for our situation.
\begin{Lemma}
  \label{lem:7}
  Each equivalence-class of morphisms $X \rightarrow Y$ in $\J\T'$ has
  a $\sqsubseteq$-largest representative. These representatives are
  precisely those $(U,f)$ for which the graph $\{(x, fx) : x \in U\}$
  of $f$ is
  closed in $X \times Y$.
\end{Lemma}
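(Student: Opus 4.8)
The plan is to realise the $\sqsubseteq$-largest representative of a class as the partial map whose graph is the \emph{closure} of the graph of any given representative. Concretely, starting from a dense partial map $(U, f) \colon X \rightarrow Y$, I would form its graph $G = \{(x, fx) : x \in U\}$, which is a sub-$A^\ast$-set of $X \times Y$ since $f$ is an $A^\ast$-set homomorphism, and take its closure $\bar G \leqslant_c X \times Y$, computed by the explicit formula~\eqref{eq:55} applied to the diagonal action on $X \times Y$. The candidate largest representative is then $(\bar U, \bar f)$, where $\bar U$ is the image of $\bar G$ under the first projection and $\bar f$ is the map it induces.

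The first---and I expect main---step is to check that $\bar G$ is again the graph of a partial function, i.e.\ that the projection $\pi_X \colon \bar G \rightarrow X$ is injective; this is exactly where separatedness of $Y$ is essential. Suppose $(x, y_1), (x, y_2) \in \bar G$. By~\eqref{eq:55}, and after enlarging the witnessing exponent (using that $G$ is a sub-$A^\ast$-set to pass to longer words), there is a single $n$ with $(w \cdot x, w \cdot y_1)$ and $(w \cdot x, w \cdot y_2)$ both lying in $G$ for all $w \in A^n$; since $G$ is a genuine graph, this forces $w \cdot y_1 = w \cdot y_2$ for all $w \in A^n$, and a downward induction on word length using separatedness of $Y$ then yields $y_1 = y_2$. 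With this in hand, $\bar U$ is a sub-$A^\ast$-set and $\bar f$ an $A^\ast$-set homomorphism; moreover $\bar U$ is dense, since it contains the dense $U$, and $(U, f) \sqsubseteq (\bar U, \bar f)$, so $(\bar U, \bar f)$ is a representative of the same class.

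Next I would prove maximality: any $(V, g) \sim (U, f)$ satisfies $(V, g) \sqsubseteq (\bar U, \bar f)$. Choosing a common lower bound $(W, h)$ with $W \leqslant U \cap V$ dense and $h = \res f W = \res g W$, I note that $W$ is also dense \emph{in $V$} (again by~\eqref{eq:55}), so for each $x \in V$ there is $n$ with $w \cdot x \in W$ for all $w \in A^n$; then $w \cdot (x, gx) = (w \cdot x, f(w \cdot x)) \in G$ for all such $w$, whence $(x, gx) \in \bar G$. Thus the graph of $g$ is contained in $\bar G$, giving $V \leqslant \bar U$ and $g = \res{\bar f}V$. Since $\sqsubseteq$ is a genuine partial order on representatives, this identifies $(\bar U, \bar f)$ as the unique $\sqsubseteq$-largest element of its class, establishing the first assertion.

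Finally, for the characterisation, the largest representative $(\bar U, \bar f)$ has closed graph by construction, a closure being closed. Conversely, if $(V, g)$ is any representative whose graph is already closed in $X \times Y$, then the closure-of-graph construction applied to $(V,g)$ returns $(V, g)$ itself, so $(V, g)$ coincides with the (unique) largest representative of its class. Hence a representative is $\sqsubseteq$-largest if and only if its graph is closed, as claimed.
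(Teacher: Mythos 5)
Your proposal is correct and follows essentially the same route as the paper's proof: close the graph, use \eqref{eq:55} together with separatedness of $Y$ (your downward induction is exactly what the paper's terser ``by separatedness'' means) to see the closure is still a graph, and then use a common lower bound to show every equivalent representative sits below the resulting maximal one, with closed-graph maps being precisely the fixed points of this construction. No gaps to report.
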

\begin{proof}
  Given a dense partial map $(U,f) \colon X \rightarrow Y$, let
  $G \leqslant X \times Y$ be the graph of $f$ and
  $G' \leqslant_c X \times Y$ its closure. We claim that $G'$ is in
  turn the graph of a function, i.e., that if
  $\{(x,y), (x,y')\} \subseteq G'$, then $y = y'$. From~\eqref{eq:55},
  if $(x,y) \in G'$, then there is some $n$ so that
  $(w \cdot x, w \cdot y) \in G$ for every $w \in A^n$. We
  get a corresponding $n$ for $(x,y')$ and so on taking the larger of
  the two we may assume that $(w \cdot x, w \cdot y)$ and
  $(w \cdot x, w \cdot y')$ are in $G$ for all $w \in A^n$.
  But then, as $G$ is the graph of a function,
  $w \cdot y = w \cdot y'$ for all $w \in A^n$, whence
  $y = y'$ by separatedness of $Y$.

  So taking $U' = \{x \in X : (x,y) \in G'\}$ we see that
  $G'$ is the graph of a $A^\ast$-set homomorphism
  $f' \colon U' \rightarrow Y$; and since $U \leqslant U' \leqslant X$
  and $U \leqslant_d X$, also $U' \leqslant_d X$, so that $(U',f')$ is
  a dense partial map, which, since clearly
  $(U, f) \sqsubseteq (U', f')$, is equivalent to $(U,f)$. Finally,
  note that if $(U,f) \sqsubseteq (V,g) \colon X \rightarrow Y$, then
  $U \leqslant_d V$ and so the graph of $f$ is dense in the graph of
  $g$; as such, they have the same closures, so that
  $(U', f') = (V', g')$. Thus the assignment $(U,f) \mapsto (U',f')$
  picks out a $\sqsubseteq$-maximal representative of each equivalence
  class.
\end{proof}
Combining this  with our preceding observations, we arrive at:
\begin{Lemma}
  \label{lem:3}
  The category $\J\T$ is equivalent to the category $\J\T'$ wherein:
  \begin{itemize}
  \item \textbf{Objects} are separated left $A^\ast$-sets;
  \item \textbf{Morphisms} $X \rightarrow Y$ are
    dense partial maps $(U,f) \colon X \rightarrow Y$ which are
    \emph{maximal}, in the sense that the graph of $f$ is closed in $X
    \times Y$;
  \item The \textbf{composite} of $(U,f)$ and
    $(V,g)$ is the maximal extension  of  $(f^{-1}(V), \lambda x.\,gfx)$;
  \item The \textbf{identity} on $X$ is $(X, 1_X)$, 
  \end{itemize}
  via an equivalence which identifies
  $L_2 \colon (A^\ast\text-\cat{Set})_\mathrm{sep} \rightarrow \J\T$ with
  the identity-on-objects functor
  $(A^\ast\text-\cat{Set})_{\mathrm{sep}} \rightarrow \J\T'$ sending
  $f \colon X \rightarrow Y$ to $(X,f) \colon X \rightarrow Y$.
\end{Lemma}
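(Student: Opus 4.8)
The plan is to read off \cref{lem:3} from the category-of-fractions presentation established just above, using \cref{lem:7} to replace each equivalence class of dense partial maps by its canonical representative. Write $\J\T_{\mathrm f}$ for the category of fractions already shown to be equivalent to $\J\T$, whose hom-set $\J\T_{\mathrm f}(X,Y)$ consists of $\sim$-classes of dense partial maps $(U,f)\colon X\rightarrow Y$. By \cref{lem:7}, every such class contains a unique $\sqsubseteq$-largest element, and these largest elements are exactly the \emph{maximal} dense partial maps, namely those whose graph is closed in $X\times Y$. So the assignment sending each class to its largest representative is a bijection, identity on objects, between $\J\T_{\mathrm f}$ and the category $\J\T'$ of the statement. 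First I would set up this bijection explicitly and then check it is functorial, so that it is an isomorphism of categories $\J\T_{\mathrm f}\cong\J\T'$; composing with $\J\T\simeq\J\T_{\mathrm f}$ then yields $\J\T\simeq\J\T'$.

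For functoriality there are exactly two things to verify, and both reduce to separatedness. For identities, the identity of $X$ in $\J\T_{\mathrm f}$ is the class of $(X,1_X)$, whose graph is the diagonal $\{(x,x):x\in X\}$; this is closed in $X\times X$, since $\ell\cdot x=\ell\cdot y$ and $r\cdot x=r\cdot y$ force $x=y$ by separatedness of $X$. Hence $(X,1_X)$ is already maximal and agrees with the identity prescribed in $\J\T'$. For composition, the composite of classes $[U,f]$ and $[V,g]$ in $\J\T_{\mathrm f}$ is by definition the class of the partial-map composite $(f^{-1}(V),\lambda x.\,gfx)$; applying the bijection returns the largest representative of this class, which is precisely the maximal extension declared to be the composite in $\J\T'$. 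So composition is respected essentially tautologically, and the bijection is an isomorphism of categories.

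It then remains to transport the localization functor. Since $L_2\colon(A^\ast\text-\cat{Set})_{\mathrm{sep}}\rightarrow\J\T$ is the universal functor inverting the dense inclusions, under $\J\T\simeq\J\T_{\mathrm f}$ it becomes the canonical functor to the category of fractions: identity on objects, sending a homomorphism $f\colon X\rightarrow Y$ to the class of the total partial map $(X,f)$. The graph $\{(x,fx):x\in X\}$ of a homomorphism is closed in $X\times Y$, for if $\ell\cdot y=f(\ell\cdot x)$ and $r\cdot y=f(r\cdot x)$, then, $f$ being a homomorphism, $\ell\cdot y=\ell\cdot f(x)$ and $r\cdot y=r\cdot f(x)$, whence $y=f(x)$ by separatedness of $Y$. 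Thus $(X,f)$ is already maximal, so under $\J\T_{\mathrm f}\cong\J\T'$ the functor $L_2$ is identified with the identity-on-objects functor sending $f$ to $(X,f)$, as claimed.

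The entire argument rests on \cref{lem:7} together with the two closure observations above; everything else is the formal bookkeeping of passing from a category of fractions to the subcategory of canonical representatives. I therefore do not expect a genuine obstacle here: the only point demanding care is confirming that identities and composites of maximal representatives are again maximal, and both follow immediately from separatedness of the objects involved.
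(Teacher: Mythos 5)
Your proposal is correct and follows essentially the same route as the paper: Lemma~\ref{lem:3} is obtained there precisely by combining the category-of-fractions presentation of $\J\T$ with Lemma~\ref{lem:7}, replacing each $\sim$-class by its $\sqsubseteq$-largest representative. Your added verifications---that the diagonal of a separated $A^\ast$-set and the graph of a total homomorphism into a separated $A^\ast$-set are closed, so that identities and the images of $L_2$ are already maximal---are exactly the routine details the paper leaves implicit.
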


Now, $F1 \in \J\T$ is the image under $L_2L_1$ of the free left
$A^\ast$-set on one generator which is, of course, $A^\ast$ itself. Since
$A^\ast$ is left-cancellable, it is separated as a left $A^\ast$-set, and
so $L_1(A^\ast) = A^\ast$; whence, by the preceding lemma, we can identify
$F1 \in \J\T$ with $A^\ast \in \J\T'$, and so identify the monoid
$M = \J\T(F1,F1)$ with $\J\T'(A^\ast, A^\ast)$, the monoid of maximal dense
partial left $A^\ast$-set maps $A^\ast \rightarrow A^\ast$.

To relate this to~\cite{Birget2009Monoid}, let us note that a left
ideal (i.e., sub-$A^\ast$-set) $I \leqslant A^\ast$ is dense just when
its closure $I'$ contains the empty word, which, by~\eqref{eq:55},
happens just when $A^n \subseteq I$ for some $n \in \mathbb{N}$. This
is easily equivalent to $I$ being \emph{cofinite}, i.e.,
$A^\ast \setminus I$ being finite, but also, as explained
in~\cite{Scott1984Construction}, to $I$ being finitely generated and
\emph{essential}, meaning that it intersects every non-trivial left
ideal of $A^\ast$. Thus, $M = \J\T'(A^\ast, A^\ast)$ is the monoid of
pairs $(I,f)$, where $I \leqslant A^\ast$ is a finitely generated
essential left ideal and $f \colon I \rightarrow A^\ast$ is a maximal
$A^\ast$-set map, with the monoid product given by partial map
composition followed by maximal extension. Modulo our conventions
(left, not right, actions; product in $M$ given by composition in
diagrammatic, not applicative, order), this is the definition of
$\mathit{tot}M_{2,1}$ in~\cite{Birget2009Monoid}.

To further relate this description of $M$ to the presentation
in~\eqref{eq:60}, note that any ideal $I \leqslant A^\ast$ is
generated by the (finite) set of those words $u_1, \dots, u_k \in I$
which have no proper initial segment in $I$ (where, again, ``initial''
means ``rightmost''); we call these words the \emph{basis} of $I$ and
write $I = \spn{u_1, \dots, u_k}$. Now given
$(I,f) \colon A^\ast \rightarrow A^\ast$ in $M$, on taking the basis
$\{u_i\}$ of $I$ and associated elements $v_i = f(u_i)$, we obtain
data for a function $\varphi \colon C \rightarrow C$ as
in~\eqref{eq:60}. Density of $I$ ensures this $\varphi$ is
\emph{total}; while maximality of $f$ ensures each such $\varphi$ is
represented by a \emph{unique} $(I,f)$.

We now describe the Boolean algebra $B = \J\T(F1, 1+1)$. Since
$1+1 \in \J\T$ is the image under $L_2$ of the separated $A^\ast$-set
$\{\top, \bot\}$ with the trivial $A^\ast$-action, we can describe $B$
as $\J\T'(A^\ast, \{\top, \bot\})$, that is, as the set of maximal
dense partial maps $(I,f) \colon A^\ast \rightarrow \{\top, \bot\}$.
For such a map, the inverse images $I_\top = f^{-1}(\top)$ and
$I_\bot = f^{-1}(\bot)$ are sub-ideals of $I$ which partition it and
which, by maximality of $f$, must be \emph{closed} in $A^\ast$.
Furthermore, if $I = \spn{G}$, then $I_\top = \spn{G \cap I_\top}$ and
and $I_\bot = \spn{G \cap I_\bot}$; in particular, they are finitely
generated. Of course, we can re-find $I$ from $I_\bot$ and $I_\top$ as
their (disjoint) union, whence $\J\T'(A^\ast, \{\top, \bot\})$ is
isomorphic to the set of pairs of finitely generated closed ideals
$I_\top, I_\bot \leqslant_c A^\ast$ which are \emph{complementary},
meaning that $I_\top \cap I_\bot = \emptyset$ and $I_\top \cup I_\bot$
is dense in $A^\ast$.

In fact, any finitely generated closed ideal $I$ has a unique finitely
generated closed complement $I'$; indeed, if $I = \spn{G}$ and $n$ is
the length of the longest word in $G$, then $I'$ is the closed ideal
generated by $\{w \in A^n : w \notin I\}$. Thus we can identify $B$
with the Boolean algebra of finitely generated closed left ideals of
$A^\ast$; which in turn can be identified with the Boolean algebra of
clopen sets of Cantor space $A^{-\omega}$, where
$I \leqslant_c A^\ast$ corresponds to the clopen set of words with an
initial segment in $I$; note closedness ensures each clopen set is
represented by a \emph{unique} $I$.

To complete the description of $\BM$, we must give the actions of $B$
and $M$ on each other; using the structure of $\J\T'$ it is not hard
to show that these are given as follows. If $m = (I,f)$ and $n =
(J,g)$ are in $M$, and $b = (K \leqslant_c A^\ast)$ is in $B$, then
\begin{itemize}
\item $m^\ast b \in B$ is the closure of $f^{-1}(K) \leqslant I \leqslant A^\ast$.
\item $b(m,n) \in M$ is the maximal extension of
  $(K \cap I + K' \cap J, \spn{{\res f {K\cap I}}, {\res g {K' \cap J}}})$.
\end{itemize}
Equally, if we view elements of $M$ as continuous endomorphisms
$\varphi$ of Cantor space, and elements of $B$ as clopens $U$ of
Cantor space, then the $M$-action on $B$ is given by
$\varphi, U \mapsto \varphi^{-1}(U)$, while the $B$-action on $M$ is
given by
$U, \varphi, \psi \mapsto \spn{\res \varphi U, \res \psi {U^c}}$.

Let us also indicate how each J\'onsson--Tarski algebra $X$ becomes a
$\BM$-set. First note that, viewing such an $X$ as a left $A^\ast$-set,
the maximal extension $(I',f')$ of a dense partial map
$(I \leqslant_d A^\ast, f \colon I \rightarrow X)$ is a \emph{total} map,
i.e, $I' = A^\ast$; for indeed, if not, then on choosing a word $w$ of
maximal length in $A^\ast \setminus I'$, we would have $(\ell w, x)$ and
$(rw, y)$ in the graph of $f'$ but then by closedness would have
$(w, m(x,y))$ also in the graph, a contradiction. Thus, for the
$B$-set structure on $X$, given $x,y \in X$ and
$b = (J \leqslant_c A^\ast)$ in $B$, we take $b(x,y)$ to be the element
classified by the maximal extension of the dense partial map
\begin{equation*}
  J + J' \xrightarrow{\text{inclusion}} A^\ast + A^\ast \xrightarrow{\spn{x,y}} X\rlap{ ;}
\end{equation*}
while given $m = (I, f)$ in $M$ and $x \in X$, we take $m \cdot x$ as
the element classified by the maximal extension of the dense partial
map $x \circ f \colon I \rightarrow A^\ast \rightarrow X$.

Finally, we remark on some of the other perspectives on $\BM$. The
associated Boolean restriction monoid $S$ is the
\emph{Thompson--Higman partial function monoid} $M_{2,1}$
of~\cite{Birget2009Monoid}, whose elements are maximal partial maps
$(I, f) \colon A^\ast \rightarrow A^\ast$ defined on an arbitrary finitely
generated ideal. If we consider the following elements of $S$:
\begin{equation*}
  \ell = (A^\ast, (\thg) \cdot \ell) \quad r = (A^\ast, (\thg) \cdot r) \quad \ell^\ast = (A^\ast\ell, \partial) \quad r^\ast = (A^\ast r, \partial)
\end{equation*}
where $\partial$ is the function which deletes the last element of a
non-empty word, then $S$ can equally be described as the free Boolean
restriction monoid generated by $\ell, r, \ell^\ast, r^\ast$ subject
to the axioms
\begin{equation}
  \label{eq:64}
  \ell \ell^\ast = rr^\ast = 1 \qquad \ell r^\ast = r \ell^\ast = 0 \quad \text{and} \quad \ell^\ast \ell \vee r^\ast r = 1\rlap{ .}
\end{equation}
(These may look backwards to those familiar with the Cuntz
$C^\ast$-algebra, but recall $st$ means ``first $s$ then $t$''.) If
for a word $a_1 \cdots a_k \in A^\ast$ we write
$(a_1 \cdots a_k)^\ast = a_k^\ast \cdots a_1^\ast$, then these
equations allow every $s \in S$ to be written as
$s = u_1^\ast v_1 \vee \cdots \vee u_k^\ast v_k$ where the $u_i$'s and
$v_i$'s are in $A^\ast$ with the $u_i$'s the basis of an ideal $I$;
composition is then given by juxtaposition and reduction using the
axioms~\eqref{eq:64}. Note each such element
$s = u_1^\ast v_1 \vee \cdots \vee u_k^\ast v_k$ corresponds to a
\emph{partial} endomorphism $C \rightharpoonup C$ defined as
in~\eqref{eq:60}, so that $S$ can equally be identified with the
Boolean restriction monoid of all such partial endomorphisms of $C$.

\eqref{eq:64} also implies that each generator of $S$ is a partial
isomorphism; whence $S$ is \'etale
(cf.~\cite[Proposition~5.1]{Lawson2021Polycyclic}) and so generated by
its Boolean inverse monoid of partial isomorphisms. This Boolean
inverse monoid is the ``Thompson--Higman inverse monoid''
$\mathit{Inv}_{2,1}$ of~\cite{Birget2009Monoid}, or equally, the
\emph{Cuntz inverse monoid} of~\cite{Lawson2021Polycyclic}. This last
identification implies, in turn, that the classifying topological
category of $\BM$ is the well-known \emph{Cuntz groupoid} $\mathfrak{O}_2$
of~\cite[Definition~III.2.1]{Renault1980A-groupoid}, whose Stone space
of objects is Cantor space and whose morphisms $W \rightarrow W'$ are
integers $i$ such that $W_{n} = W'_{n+i}$ for all sufficiently large
$n$. We can also see this directly; indeed, since $B$ comprises the
clopen subsets of Cantor space $C$, the classifying topological
category must have space of objects $C$; and since $M$ comprises all
continuous maps $C \rightarrow C$ of the form~\eqref{eq:60}, the
arrows $W \rightarrow W'$ must be germs at $W$ of those
maps~\eqref{eq:60} for which $\varphi(W) = W'$. This is a well-known
alternative description of $\mathfrak{O}_2$.

Now, since $\J\T$ is a topos, we recover the fact that the Cuntz
groupoid $\mathfrak{O}_2$ is minimal. On the other hand, since $\mathfrak{O}_2$ is a
groupoid and not just a category, the theory of J\'onsson--Tarski
algebras is groupoidal---which also follows from the fact that the
Boolean restriction monoid $S$ is \'etale. In particular, this yields
a simple description of the cartesian closed structure of $\J\T$.
Given $Y,Z \in \J\T$, their function-space $Z^Y$
comprises the $B$-set homomorphisms $Y \rightarrow Z$, i.e., the set
\begin{equation*}
  Z^Y = \{f \colon Y \rightarrow Z \mid w \cdot y = w \cdot y' \implies w \cdot f(y) = w \cdot f(y') \text{ for all } w \in A^\ast\}\rlap{ ,}
\end{equation*}
under an algebra structure which we can read off from~\eqref{eq:58}
as being:
\begin{equation*}
  (\ell \cdot f)(y) = \ell \cdot f(m(y,y)) \quad \text{and} \quad (r \cdot f)(y) = r \cdot f(m(y,y))\rlap{ ,}
\end{equation*}
with inverse $m \colon Z^Y \times Z^Y \rightarrow Z^Y$ given by
$m(g,h)(y) = m(g(\ell \cdot y), h(r \cdot y))$. The correspondence
between algebra maps $X \times Y \rightarrow Z$ and ones
$X \rightarrow Z^Y$ is now given by the usual exponential transpose of
functions.

\subsection{The infinite J\'onsson--Tarski topos}
\label{sec:infin-jonss-tarski}
As noted in~\cite[Example~2]{Rosenthal1981Etendues}, we may generalise
the notion of J\'onsson--Tarski algebra to involve a set $X$ endowed
with an isomorphism $X \rightarrow X^A$ for any fixed set $A$. The
resulting concrete category $\J\T_A$ is still a non-degenerate
variety, and may still be described as a topos of sheaves, now on the
free monoid $A^\ast$ for the topology generated by the cover
$\{a : a \in A\}$.

This generalisation is unproblematic when $A$ is finite, and this case
was already studied by Higman, Scott and
Birget~\cite{Higman1974Finitely, Scott1984Construction,
  Birget2009Monoid}. When $A$ is infinite, things are more
interesting, not least because $\J\T_A$ is then a non-finitary variety
of $\BJM$-sets. With this being said, much of the work we did above
carries over. We can define dense and closed inclusions \emph{mutatis
  mutandis} as before, and we still find $M$ as the monoid of maximal
dense partial maps $A^\ast \rightarrow A^\ast$. The main difference is in
the characterisation of the dense ideals. When $A$ is finite, these
correspond to \emph{finite} $A$-ary branching trees, where a given
tree $\tau$ corresponds to the ideal generated by the addresses of its
leaves. In the infinite case, they correspond to \emph{well-founded}
$A$-ary branching trees; these are potentially infinite, but have no
infinite path starting at the root. The following lemma translates
this into ideal-theoretic language.

\begin{Lemma}
  \label{lem:4}
  An ideal $I \leqslant  A^\ast$ is dense if, and only if, each infinite
  word $\cdots w_2 w_1 w_0 \in A^{-\omega}$ has an initial segment in $I$.
\end{Lemma}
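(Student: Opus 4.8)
The plan is to reduce density to a statement about the empty word and then recognise the lemma as a König-type equivalence. Since $I$ is a sub-$A^\ast$-set, so is its closure $I'$, and as $I'$ is a left ideal we have $I' = A^\ast$ precisely when $I'$ contains the empty word; thus $I$ is dense if and only if the empty word lies in the smallest closed ideal containing $I$ (\cref{def:37}), and the content of the lemma is that this happens exactly when no infinite word avoids $I$. I would argue directly from the smallest-closed-ideal description rather than from \eqref{eq:55}: the uniform-depth formula computes the correct closure only under König's lemma, which fails when $A$ is infinite, and it is precisely this failure that the lemma is designed to navigate.

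For the ``if'' direction, suppose every $\cdots w_2 w_1 w_0 \in A^{-\omega}$ has an initial segment in $I$, and suppose towards a contradiction that the empty word is not in $I'$. Since $I'$ is closed, the defining implication ``$a \cdot x \in I'$ for all $a \in A$ implies $x \in I'$'' contraposes to the statement that any $x \notin I'$ admits some $a \in A$ with $a \cdot x \notin I'$. Applying this repeatedly (using dependent choice), and starting from the empty word, I would build letters $w_0, w_1, w_2, \dots$ for which every initial segment $w_n \cdots w_0$ lies outside $I' \supseteq I$; the resulting infinite word then avoids $I$, contradicting the hypothesis. Hence the empty word lies in $I'$, and $I$ is dense.

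For the ``only if'' direction I argue contrapositively. Suppose some infinite word $W = \cdots w_1 w_0$ has no initial segment in $I$; I will produce a single closed ideal $J \leqslant_c A^\ast$ with $I \leqslant J$ but with the empty word outside $J$, which forces $I' \leqslant J$ and hence shows $I$ is not dense. Take $P$ to be the set of all finite initial segments of $W$ together with the empty word, and set $J = A^\ast \setminus P$. Then $J$ is a sub-$A^\ast$-set, since $a \cdot v \in P$ forces $v \in P$; and $J$ is closed because the closedness condition can fail only at some $v \notin J$, i.e.\ at some $v \in P$, yet each element of $P$ has a child in $P$ --- namely the next-longer initial segment of $W$ --- so the hypothesis of the closedness implication already fails there and the implication holds vacuously. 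Finally $I \leqslant J$ exactly because no initial segment of $W$ lies in $I$, while the empty word lies in $P$ and hence not in $J$.

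The routine verifications are the ideal- and closedness-checks for $J$. The only genuinely delicate point is the one flagged above: for infinite $A$ the relevant closure is the least-closed-ideal (transfinitely generated) one and not the uniform-depth set of \eqref{eq:55}, so König's lemma is unavailable and the two directions must be handled by, respectively, a dependent-choice construction of a branch and the exhibition of the explicit complementary closed ideal $J$.
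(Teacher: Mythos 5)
Your proof is correct, and one half of it takes a genuinely different route from the paper's. The ``if'' direction coincides with the paper's argument: the paper computes the closure $I'$ as the stabilised stage $I_\lambda$ of a transfinite sequence, uses $I_\lambda = I_{\lambda+1}$ to extract, for each $w \notin I'$, some $a$ with $aw \notin I'$, and then makes countably many dependent choices to build an infinite word avoiding $I$ --- exactly your contrapositive-of-closedness construction, minus the ordinal bookkeeping. For the ``only if'' direction, however, the paper stays with the transfinite sequence: it assigns to each initial segment of $W$ the least ordinal $\alpha_n$ with that segment in $I_{\alpha_n}$, and shows $(\alpha_n)$ strictly decreases while positive, so well-foundedness forces some segment into $I = I_0$. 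You avoid ordinals entirely by exhibiting an explicit witness: the complement $J = A^\ast \setminus P$ of the branch through $W$ is a closed ideal containing $I$ but not the empty word, so minimality of the closure gives $I' \leqslant J \neq A^\ast$. This is more elementary --- it needs only the definition of $I'$ as least closed ideal, at the cost of the (correctly executed) check that $J$ is a closed ideal, the key point being that every element of $P$ has a child in $P$, making the closedness implication vacuous there. The paper's rank argument buys a reusable transfinite description of the closure, which is what replaces the uniform-depth formula \eqref{eq:55} when $A$ is infinite; your observation that \eqref{eq:55} is only valid for finite $A$ (one needs a maximum over the finitely many $a \in A$) is exactly right and is why the recomputation is needed. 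One pedantic point: since $P$ contains the empty word, the inclusion $I \leqslant J$ needs $\epsilon \notin I$; this is automatic (if $\epsilon \in I$ then $I = A^\ast$, and then $w_0 \in I$ is an initial segment of $W$, contradicting your standing hypothesis), but deserves a word.
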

\begin{proof}
  The closure of the ideal $I$ may be computed transfinitely: we take
  $I_0 = I$, take
  $I_{\alpha + 1} = \{w \in A^\ast : aw \in I_\alpha \text{ for all } a
  \in A\}$ and at limit stages take
  $I_{\gamma} = \bigcup_{\alpha < \gamma} I_\alpha$. By Hartog's
  lemma, this transfinite sequence stabilises at some $\lambda$ and
  now $I_\lambda = I'$.

  Suppose first that $I' = I_\lambda = A^\ast$ and let $W \in A^{-\omega}$.
  Writing $\res W n$ for the initial segment of $W$ of length $n$, we
  define
  $\alpha_n = \mathrm{min}\{\gamma \leqslant \lambda : \res W n \in
  I_\gamma\}$; note this is the minimum of a non-empty set of
  ordinals, since the empty word $\epsilon$ is in $I_\lambda$. Now if
  $\alpha_n > 0$ then by the construction of the transfinite sequence
  we must have $\alpha_{n+1} < \alpha_n$; thus, by well-foundedness we
  must have $\alpha_n = 0$ for some $n$, i.e., $\res W n \in I$.
  Conversely, suppose every infinite word $W$ has an initial segment
  in $I$; we show that $\epsilon \in I_\lambda$. Indeed, suppose not.
  Since $I_\lambda = I_{\lambda+1}$, for every $w \notin I_\lambda$
  there must exist some $a \in A$ for which $aw \notin I_\lambda$.
  Starting from $\epsilon$ and making countably many dependent
  choices, we thus obtain a sequence of words
  $\epsilon, w_0, w_1w_0, w_2w_1w_0, \dots$ and so an infinite
  sequence $W = \cdots w_2w_1w_0$ with no initial segment in $I_\lambda$
  and so certainly no initial segment in $I$---which is a
  contradiction.
\end{proof}
The characterisation of $B$ is likewise slightly different. Again, we
can identify its elements with complementary pairs of closed ideals of
$A^\ast$, but the characterisation of such pairs is more delicate. One
should think of them as well-founded $A$-ary trees whose leaves have
been labelled with $0$ or $1$; the addresses of the $0$- and
$1$-labelled leaves of such a tree then constitute the ideals in the
complemented pair. This leads to the following characterisation of the
complemented closed ideals:

\begin{Lemma}
  \label{lem:5}
  A closed ideal $I \leqslant_c A^\ast$ has a complement if, and only if,
  for every infinite word $W \in A^{-\omega}$ there is a finite initial
  segment $w$ of $W$ for which either $A^\ast w \leqslant I$ or
  $A^\ast w \cap I = \emptyset$.
\end{Lemma}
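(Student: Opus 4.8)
The plan is to pin down the unique candidate for a complement and then reduce both implications to the density criterion of \cref{lem:4}. Concretely, I would set
\[
  J = \{\, w \in A^\ast : A^\ast w \cap I = \emptyset \,\},
\]
the set of words whose entire subtree avoids $I$. First I would check that $J$ is a closed ideal. It is a sub-$A^\ast$-set because $A^\ast(aw) \subseteq A^\ast w$, so $w \in J$ gives $aw \in J$; and it is closed because, writing $A^\ast x = \{x\} \cup \bigcup_{a \in A} A^\ast(ax)$, the hypothesis $A^\ast(ax) \cap I = \emptyset$ for all $a \in A$ yields $A^\ast x \cap I \subseteq \{x\} \cap I$, while $x \in I$ is impossible since it would force $ax \in I$, contradicting $A^\ast(ax) \cap I = \emptyset$. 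I would also record that $J$ is the \emph{largest} ideal disjoint from $I$: for any ideal $K$ with $K \cap I = \emptyset$ and any $w \in K$ we have $A^\ast w \subseteq K$, hence $A^\ast w \cap I = \emptyset$ and $w \in J$; and $J \cap I = \emptyset$ directly, since $w \in J \cap I$ would put $w \in A^\ast w \cap I$.

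The key reduction is then that $I$ admits a complement if and only if $I \cup J$ is dense. Indeed, if $K$ is any complement of $I$, then $K \subseteq J$ by maximality, so $I \cup K \subseteq I \cup J$ and density of $I \cup K$ (monotonicity of closure) forces density of $I \cup J$. Conversely, if $I \cup J$ is dense, then $J$ is itself a complement: it is a closed ideal disjoint from $I$, and in the Boolean algebra of closed ideals the join is the closure of the union, so $I \vee J = A^\ast$ means precisely that $I \cup J$ is dense.

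Finally I would translate this density using \cref{lem:4}: $I \cup J$ is dense exactly when every infinite word $W \in A^{-\omega}$ has a finite initial segment $w$ lying in $I \cup J$. For a single word $w$, membership in $I$ is equivalent to $A^\ast w \subseteq I$ (as $I$ is an ideal and $w \in A^\ast w$), while membership in $J$ is by definition equivalent to $A^\ast w \cap I = \emptyset$; so ``$w \in I \cup J$'' is literally the disjunction in the statement. Chaining these equivalences gives the lemma.

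The only step requiring genuine care is the verification that $J$ is closed; everything else is a direct unwinding of definitions, and in particular no transfinite argument of the kind used for \cref{lem:4} is needed here. I expect the write-up to be short, with essentially all the conceptual content carried by the single observation that $J$ is the largest ideal disjoint from $I$, which simultaneously drives both implications.
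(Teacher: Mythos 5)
Your proposal is correct and follows essentially the same route as the paper: both define the same candidate complement $\{w \in A^\ast : A^\ast w \cap I = \emptyset\}$, observe it is a closed ideal disjoint from $I$, and reduce both implications to the density criterion of Lemma~\ref{lem:4}. Your packaging via ``largest ideal disjoint from $I$'' is just a mild reorganisation of the paper's forward direction (where $w \in I'$ gives $A^\ast w \cap I = \emptyset$ directly), so there is no substantive difference.
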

\begin{proof}
  If $I$ has a complement $I'$ then $I+I'$ is dense, whence for any
  $W \in A^{-\omega}$ there is a finite initial segment $w$ with
  $w \in I+I'$. If $w \in I$ then $A^\ast w \leqslant I$; while if
  $w \in I'$ then $A^\ast w \cap I = \emptyset$. Suppose conversely that
  $I$ satisfies the stated condition; then we define
  $I' = \{w \in A^\ast \mid A^\ast w \cap I = \emptyset\}$. It is easy to
  see that $I'$ is a closed ideal which is obviously disjoint from
  $I$. Moreover, $I+I'$ is dense: for if $W$ is any infinite word,
  then there is an initial segment $w$ for which either
  $A^\ast w \leqslant I$, whence $w \in I \leqslant I+I'$, or
  $A^\ast w \cap I = \emptyset$, whence $w \in I' \leqslant I+I'$.
\end{proof}
Now $B$ is the Boolean algebra of these complemented closed ideals,
and the actions of $B$ and $M$ on each other are much as before. The
extra ingredient is the zero-dimensional topology on $B$; and it is
not hard to see that a disjoint family of complemented closed ideals
$(I_x : x \in X)$ is in $\J$ just when every infinite word
$W \in A^{-\omega}$ has an initial segment in (exactly) one of the
$I_x$'s.

The motivating topological perspective also generalises to the
infinitary case. This may come as a surprise: after all, according to
what we said earlier, in the Grothendieck case we should only expect a
\emph{localic} perspective. However, in this example there are enough
$\J$-closed ideals to separate elements of $B$ (this is essentially
the force of the last two lemmas), so that $\BJ$ can be identified
with the Grothendieck Boolean algebra of clopen sets of the space of
$\J$-prime filters on $B$---which is the (non-compact) prodiscrete
space $A^{-\omega}$. With this identification made, we may now view
$M$ as the monoid of continuous functions
$A^{-\omega} \rightarrow A^{-\omega}$ of the form~\eqref{eq:60}, but
now for a possibly \emph{infinite} family of pairs $(u_i, v_i)$.

It follows from the above that the classifying localic category of
$\BJM$ is in fact spatial and, like before, a groupoid; it is the
obvious generalisation of $\mathfrak{O}_2$, with space of objects $A^{-\omega}$
and morphisms defined just as before. On the other hand, the
associated Grothendieck Boolean restriction monoid $S_\J$ is generated
by elements $a$ and $a^\ast$ for each $a \in A$, subject to the axioms
\begin{equation}
  \label{eq:63}
  aa^\ast = 1 \text{ for all } a \in A\text{,} \quad ab^\ast = 0 \text{ for all } a \neq b \in A \quad \text{and} \quad 
  \textstyle\bigvee_{a \in A} a^\ast a = 1\rlap{ ,}
\end{equation}
and, much as before, elements of $S_\J$ correspond to the
\emph{partial} continuous maps
$A^{-\omega} \rightharpoonup A^{-\omega}$ of the form~\eqref{eq:60}.

\section{Nekrashevych toposes}
\label{sec:nekrashevych-toposes}
Our next example draws on the material
of~\cite{Nekrashevych2005Self-similar, Nekrashevych2009Cstar}; the
idea is to extend the monoids $M$ studied in the previous two sections
to monoids of endomorphisms $\varphi \colon A^{-\omega} \rightarrow
A^{-\omega}$ which can be written in the form
\begin{equation}
  \label{eq:62}
  \varphi(Wu_i) = W'v_i \qquad \text{with} \qquad W' = p_i(W)\rlap{ ,}
\end{equation}
where each $p_i$ lies in a monoid of ``well-behaved'' endomorphisms of
$A^{-\omega}$.

\begin{Defn}[Self-similar monoid]
  \label{def:39}
  Let $P$ be a monoid of continuous functions
  $A^{-\omega} \rightarrow A^{-\omega}$. We say that $P$ is
  \emph{self-similar} if for every $p \in P$ and $a \in A$ there
  exists $b \in A$ and $q \in P$ such that $p(Wa) = q(W)b$ for all
  $W \in A^{-\omega}$.
\end{Defn}
In~\cite{Nekrashevych2005Self-similar, Nekrashevych2009Cstar}, the
``well-behaved'' endomorphisms are always invertible, whereupon we
speak of self-similar \emph{groups}; but the invertibility has no
bearing on constructing a cartesian closed variety, and so we develop
the more general case here.

If we name the $b$ and $q$ in the above definition as $p \star a$ and
$\res p a$, then we can finitistically encode the action of elements
of $P$ on infinite words via what a computer scientist would call a
\emph{Mealy machine}, an algebraist would call a \emph{matched pair of
  monoids}~\cite{Majid1995Foundations}, and a category theorist would
call a \emph{distributive law}~\cite{Beck1969Distributive}:

\begin{Defn}[Self-similar monoid action]
  \label{def:38}
  Let $P$ be a monoid. A \emph{self-similar action} of
  $P$ on a set $A$ is a function
  \begin{align*}
    \delta \colon A \times P &\rightarrow P \times A &
    (a,p) &\mapsto (\res p a, a \star p)
    \rlap{ ,}
  \end{align*}
  satisfying the axioms:
  \begin{itemize}
  \item $a \star 1 = a$ and $a \star (pq) = (a \star p) \star q$
    (i.e., $\star$ is a monoid action on $A$); and
  \item $\res 1 a = 1$ and $\res{(pq)}a = \res p {a} \res q {a \star p}$.
  \end{itemize}
  A self-similar action of $P$ on $A$ induces one on $A^\ast$, where:
  \begin{equation}
    \label{eq:61}
    \begin{aligned}
      \res p {a_n \cdots a_1} &= \res{\smash{(\cdots (\res{\smash{(\res
              p {a_1})}}{a_{2}}) \cdots)}}{a_n} \\ \text{and } (a_n \cdots a_1) \star p &= 
       (a_n \star \res{p}{a_{n-1} \cdots a_1}) \cdots (a_3 \star \res{p} {a_2a_1})(a_2 \star \res {p} {a_1})(a_1 \star p)\rlap{ ;}
    \end{aligned}
  \end{equation}
  and we say $\delta$ is a \emph{faithful} self-similar action if
  the action $\star$ of $P$ on $A^\ast$ is faithful.
\end{Defn}
If 
$\delta \colon A \times P \rightarrow P \times A$ is a self-similar
action, then the action of $P$ on $A^\ast$ determines a continuous
action of $P$ on $A^{-\omega}$, given by:
\begin{equation*}
  p(\cdots a_3 a_2 a_1) \qquad = \qquad \cdots( a_3 \star \res{p} {a_2a_1})(a_2 \star \res {p} {a_1})(a_1 \star p)
       \rlap{ ;}
\end{equation*}
and if $\delta$ is a faithful self-similar action, then this action on
$A^{-\omega}$ is again faithful, so that we can identify $P$ with a
self-similar monoid of continuous endomorphisms
$A^{-\omega} \rightarrow A^{-\omega}$. Thus, self-similar submonoids of
$\mathrm{End}(A^{-\omega}, A^{-\omega})$ amount to the same thing as
faithful self-similar monoid actions on $A$.

We now construct a cartesian closed variety from any self-similar
monoid action.

\begin{Defn}[Nekrashevych algebras]
  \label{def:40}
  Given a self-similar action of a monoid $P$ on $A$ and a left
  $P$-set $X$, we define a left $P$-set structure on $X^A$ via
  $(p \cdot \varphi)(a) = \res p a \cdot \varphi(a \star p)$. A
  \emph{Nekrashevych $\delta$-algebra} is a left $P$-set $X$ endowed
  with an $P$-set isomorphism $X \cong X^A$. We write $\N_\delta$ for
  the variety of Nekrashevych $\delta$-algebras.
\end{Defn}
Like before, $\N_\delta$ is cartesian closed by virtue of being a
topos of sheaves on a monoid. The monoid in question we write as
$P \bowtie_\delta A^\ast$, the \emph{Zappa-Sz\'ep product} of $P$ and
$A^\ast$ over $\delta$; its underlying set is $P \times A^\ast$, its
unit element is $(1, \epsilon)$, and its multiplication is given using
the self-similar action~\eqref{eq:61} of $P$ on $A^\ast$ by
$(p,u) (q,v) = (p (\res q u), (u \star p)v)$. (In fact, the monoids
arising in this way from self-similar group actions have an abstract
characterisation due to Perrot; see~\cite{Lawson2008Correspondence}
for the details.)

$P \bowtie_\delta A^\ast$ has an obvious presentation: the generators
are $(1,a)$ for $a \in A$ together with $(p,\epsilon)$ for $p \in P$,
and the axioms are $1 = (1,\epsilon)$,
$(p,\epsilon)(q,\epsilon) = (pq,\epsilon)$ and
$(1,a)(p,\epsilon) = (\res p a, \epsilon)(1,a \star p)$. Thus, a left
$P \bowtie_\delta A^\ast$-set structure on $X$ is the same thing as a
left $P$-set structure and a left $A^\ast$-set structure such that
$a \cdot (p \cdot x) = \res p a \cdot ((a \star p) \cdot x)$ for all
$x \in X$, $p \in P$ and $a \in A$; but this is precisely to say that
the family of maps $a \cdot (\thg) \colon X \rightarrow X$ assemble to
give a left $P$-set map $X \rightarrow X^A$, where $X^A$ is given the
$P$-set structure from Definition~\ref{def:40}. It follows as
in~\cite[Example~1.3]{Johnstone1985When} that $\N_\delta$ can be
presented as the topos of sheaves on $P \bowtie_\delta A^\ast$
for the topology generated by the covering family
$\{(1,a) : a \in A\}$.

We can now follow through the argument of the preceding sections to
obtain a presentation of the matched pair $\BJM$ for which
$\N_\delta \cong \BJM\text-\cat{Set}$. A subtle point that requires
some additional work is the following:
\begin{Prop}
  \label{prop:48}
  Let $\delta \colon A \times P \rightarrow P \times A$ be a
  self-similar action of $P$ on $A$. If $\delta$ is a faithful action,
  then $P \bowtie_\delta A^\ast$ is separated as a left
  $P \bowtie_\delta A^\ast$-set.
\end{Prop}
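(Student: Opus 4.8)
The plan is to unwind what separatedness demands and then reduce it, through the recursive structure of the self-similar action, to the faithfulness hypothesis. As in the binary case, $P \bowtie_\delta A^\ast$ is separated exactly when, for $s,t \in P \bowtie_\delta A^\ast$, the conditions $(1,a)\,s = (1,a)\,t$ for all $a \in A$ force $s = t$. So I would fix $s = (p,u)$ and $t = (q,v)$ and compute the left product by a generator. Using the Zappa--Sz\'ep multiplication we get $(1,a)(p,u) = (\res p a,\ (a \star p)\,u)$, and likewise $(1,a)(q,v) = (\res q a,\ (a \star q)\,v)$. Equating these pairs for every $a \in A$ then yields two families of identities: $\res p a = \res q a$ in $P$, and $(a \star p)\,u = (a \star q)\,v$ in $A^\ast$.

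Next I would use cancellativity of the free monoid $A^\ast$. Since $a \star p$ and $a \star q$ are single letters, the word identity $(a \star p)\,u = (a \star q)\,v$ has matching length-one initial factors, and hence splits into $a \star p = a \star q$ and $u = v$. In particular $u = v$ is settled at once, and what remains are the two pieces of ``single-letter'' data $\res p a = \res q a$ and $a \star p = a \star q$, valid for every $a \in A$; the entire content of the proposition is now to promote these to $p = q$.

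This promotion is where faithfulness enters, and it is the heart of the argument. I would show that the single-letter data already determine the action of $p$ on \emph{all} of $A^\ast$. Writing a non-empty word as $w = w' a$ with rightmost letter $a$, the extension formula~\eqref{eq:61} reorganises into the recursion $w \star p = (w' \star \res p a)(a \star p)$: the tail factor is $a \star p$, and the remaining factors are precisely $w' \star \res p a$, using that restriction along $a$ followed by restriction along $w'$ is restriction along $w' a$. Since $\res p a = \res q a$ and $a \star p = a \star q$, both factors coincide for $p$ and $q$, so $w \star p = w \star q$; together with the trivial base case $\epsilon \star p = \epsilon = \epsilon \star q$, this gives $w \star p = w \star q$ for every $w \in A^\ast$. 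As $\delta$ is a faithful self-similar action, the action $\star$ of $P$ on $A^\ast$ is faithful, whence $p = q$; combined with $u = v$ this yields $(p,u) = (q,v)$, establishing separatedness.

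The main obstacle is exactly this last step. It is tempting to hope that $\res p a = \res q a$ for all $a$ alone forces $p = q$, but the restriction data carries no injectivity, and one genuinely needs the companion identities $a \star p = a \star q$ \emph{and} the recursive description of $\star$ on words in order to reach the hypothesis that faithfulness requires. Everything else—the product computation in $P \bowtie_\delta A^\ast$ and the free-monoid cancellation—is routine unwinding.
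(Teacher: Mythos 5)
Your proof is correct and follows essentially the same route as the paper's: compute $(1,a)(p,u) = (\res p a, (a\star p)u)$, cancel in the free monoid to get $u = v$ together with $\res p a = \res q a$ and $a \star p = a \star q$ for all $a$, then use the recursion~\eqref{eq:61} to conclude that $p$ and $q$ act identically on $A^\ast$ and invoke faithfulness. The paper merely compresses the last step into the single assertion that the letter-level data determine the action on all of $A^\ast$; your explicit unwinding of the recursion is the same argument spelled out.
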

\begin{proof}
  Let $(p,u), (q,v) \in M_0$ and suppose that
  $(1,a) \cdot (p,u) = (1,a) \cdot (q,v)$ for all $a \in A$; we must
  show that $(p,u) = (q,v)$. The hypothesis says that
  $(\res p a, (a \star p)u) = (\res q a, (a \star q)v)$ for all
  $a \in A$; clearly, then, $u = v$. On the other hand, we have
  $a \star p = a \star q$ and $\res p a = \res q a$ for all $a \in A$,
  which implies that $p$ and $q$ have the same actions on $A^\ast$. By
  fidelity of $\delta$ we conclude that $p = q$ as desired.
\end{proof}
So when $\delta$ is faithful, we can describe $M$ like before as the
monoid of maximal dense partial $P
  \bowtie_\delta A^\ast$-set maps $P
  \bowtie_\delta A^\ast \rightarrow P
  \bowtie_\delta A^\ast$.
Here, although the ideal structure of $P \bowtie_\delta A^\ast$
is now more complex, the \emph{dense} ideals are no harder; they are
exactly the ideals of the form $P \times I$ where $I \leqslant_d A^\ast$.
Likewise, the complemented closed ideals of $M_0$ are those of the
form $P \times I$ for $I$ a complemented closed ideal of $A^\ast$;
and so we find that:
\begin{itemize}
\item $M$ is the monoid of all maximal partial maps
  $(P \times I, f) \colon P \bowtie_\delta A^\ast \rightarrow P
  \bowtie_\delta A^\ast$ where $I \leqslant_d A^\ast$, under the
  monoid operation given by partial map composition followed by
  maximal extension;
\item $B_\J$ is the Grothendieck Boolean algebra of 
  complemented closed ideals of $A^\ast$;
\item $M$ and $B_\J$ act on each other like before, after identifying
  each complemented closed ideal $I \leqslant A^\ast$ with the
  corresponding ideal $P \times I \leqslant P  \bowtie_\delta A^\ast$.
\end{itemize}
Since $\BJ$ is the same Grothendieck Boolean algebra as before, the
topological perspective on these data again involves seeing $M$ as a
monoid of continuous endomorphisms of the space $A^{-\omega}$. This
time, given
$(P \times I, f) \colon P \bowtie_\delta A^\ast \rightarrow P
\bowtie_\delta A^\ast$ in $M$ with $I = \spn{u_i}$, the elements
$\{u_i\}$ and $(p_i, v_i) = f(1, u_i)$ provide the data as
in~\eqref{eq:62} for the corresponding continuous endomorphism of
$A^{-\omega}$; note that fidelity of $\delta$ ensures that
distinct elements of $M$ encode distinct endomorphisms of
$A^{-\omega}$. It follows that the classifying topological category of
$\N_\delta$ has space of objects $A^{-\omega}$, and as morphisms
$W \rightarrow W'$, germs at $W$ of functions~\eqref{eq:62} with
$\varphi(W) = W'$. When $P$ is a group and $A$ is finite, this is
exactly the topological category~$\mathfrak{O}_G$ described
in~\cite[\sec 5.2]{Nekrashevych2009Cstar}.

Finally, let us consider the associated Grothendieck Boolean
restriction monoid $S_\J$ of $\BJM$; this is generated by elements
$a,a^\ast$ as in~\eqref{eq:63} but now augmented by total elements $p$
for each $p \in P$, which multiply as in $p$, and additionally satisfy
$ap = (\res p a)(a \star p)$. From this and
$p = \bigvee_{a \in A} a^\ast a p$, we deduce the left equality in:
\begin{equation}
  \label{eq:33}
  p = \bigvee_{a \in A} a^\ast \res  p a (a \star p) \qquad \qquad  pb^\ast = \bigvee_{a \star p = b} a^\ast \res p a
\end{equation}
which on multiplying by $b^\ast$ yields the equality to the right.
Using this, we can rewrite any element of $S_\J$ in the form
$\bigvee_i u_i^\ast p_i v_i$ where $\{u_i\}$ is the basis of a
complemented ideal; and much as before, each such element represents a
partial function $A^{-\omega} \rightarrow A^{-\omega}$ via the
formula~\eqref{eq:62}.

Now, because we are considering self-similar \emph{monoid} actions,
rather than \emph{group} actions, it need not be the case that the
cartesian closed variety $\N_\delta$ is groupoidal. As we would
hope, this is certainly the case when we do start from a group, but
prima facie there could be further examples beyond this. Part (b) of
the following result appears to indicate that this is so; however,
part (c) shows that this apparent extra generality is in fact
spurious: a theory of Nekrashevych algebras is groupoidal just
when it is the theory of $\delta$-algebras for some self-similar group
action.

\begin{Prop}
  \label{prop:49}
  For a faithful self-similar action $\delta$, the
  following are equivalent:
  \begin{enumerate}[(a)]
  \item The theory of Nekrashevych $\delta$-algebras is
    groupoidal;
  \item For each $p \in P$ there is a dense ideal $I \leqslant
    A^\ast$ with $\res p w$ invertible for all $w \in I$;
  \item The forgetful functor $\N_\delta \rightarrow \N_{\delta'}$ is
    an isomorphism, where
    $\delta' \colon A \times G \rightarrow G \times A$ is the
    restriction of $\delta$ to the group $G$ of invertible elements of
    $P$.
  \end{enumerate}
\end{Prop}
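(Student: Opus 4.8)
The plan is to prove the cycle through the équivalences (b) $\Leftrightarrow$ (c) and (a) $\Leftrightarrow$ (b), treating the former by comparing the two endomorphism monoids and the latter through the étale characterisation of Theorem~\ref{thm:9}.

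First I would dispatch (b) $\Leftrightarrow$ (c). Writing $M'$ for the monoid attached to the group action $\delta'$, one has $\N_{\delta'}\cong\dc{\BJ}{M'}\text-\cat{Set}$ for the \emph{same} Grothendieck Boolean algebra $\BJ$ of complemented closed ideals of $A^\ast$ (this data depends only on $A$), and $M'\hookrightarrow M$ as the submonoid of those self-similar maps~\eqref{eq:62} all of whose local states lie in $G$. Under these identifications the forgetful functor $\N_\delta\to\N_{\delta'}$ is restriction along the matched-pair inclusion $\dc{\BJ}{M'}\hookrightarrow\BJM$, which — reconstructing $M$ as an endomorphism monoid via Proposition~\ref{prop:44} — is an isomorphism of categories precisely when $M'=M$. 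Unwinding the normal form~\eqref{eq:62} through the residual calculus $\res p{wa}=\res{(\res p w)}{a}$, the equality $M'=M$ says exactly that each $p\in P$ can be rewritten with all states in $G$, i.e.\ that $\{w:\res p w\in G\}$ contains a dense ideal; this is (b). (Equivalently and more concretely, (b) lets one define the $P$-action on any $\delta'$-algebra $X$ from its $G$-action and iso $X\cong X^A$ by the recursion $p=\bigvee_a a^\ast\,\res p a\,(a\star p)$, well-definedness coming from the sheaf condition for the $\J$-cover on which the recursion lands in $G$; conversely (c) forces this recursion to terminate on a dense ideal.)

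Next, (a) $\Leftrightarrow$ (b). By Theorem~\ref{thm:9}, (a) is equivalent to $S_\J$ being étale. Since the generators $a,a^\ast$ are already partial isomorphisms and any $s\leqslant t$ with $t$ a partial isomorphism is one, and since every element of $S_\J$ has the form $\bigvee_i u_i^\ast p_i v_i$ as in~\eqref{eq:64}, étaleness reduces to the assertion that each total generator $p\in P$ is an admissible join of partial isomorphisms; as $p$ is total this means there is a partition $\{[u_i]\}\in\J$ with each restriction $\res p{[u_i]}=u_i^\ast\,\res p{u_i}\,(u_i\star p)$ a partial isomorphism. The translation to (b) then rests on the key sub-lemma: for $q\in P$, the total element $q\in S_\J$ is a partial isomorphism if, and only if, $q\in G$. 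The backward direction is immediate, as $q^{-1}\in G\subseteq P\subseteq S_\J$ supplies the two-sided inverse. Granting the sub-lemma, the piece $u_i^\ast\,\res p{u_i}\,(u_i\star p)$ is a partial isomorphism iff $\res p{u_i}\in G$, so the existence of such a $\J$-partition is exactly the statement that $\{w:\res p w\in G\}$ contains a dense ideal — that is, (b).

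The main obstacle is the forward direction of the sub-lemma, and this is where faithfulness of $\delta$ is essential. If $q\in P$ is a partial isomorphism with inverse $t\in S_\J$, then $qt=q^+=1$ forces $q$ injective and $t$ to invert $q$ on its image; writing $t$ in normal form $\bigvee_k w_k^\ast r_k z_k$ with $r_k\in P$ exhibits the inverse of $q$ as a self-similar transducer whose local states lie in $P$ on a dense cover. The difficulty is to \emph{promote} this to genuine invertibility in the abstract monoid $P$: using $\mathrm{Tot}(S_\J)=M$ and the faithful identification of $P$ with its action on $A^{-\omega}$, one descends through the tree via $q=\bigvee_a a^\ast\,\res q a\,(a\star q)$, matching the states $r_k$ of $t$ against inverses of the residuals $\res q w$ and showing, by well-foundedness of the covering ideal, that $t$ is in fact total and equal to a single element of $P$, so $q\in G$. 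The adding-machine action ($P=\langle p\rangle\cong\mathbb N$, $G=\{1\}$) is the instructive test case: there the would-be inverse ``decrement'' fails to have all its states in $P$ on any dense ideal, so $p$ is not a partial isomorphism, in agreement with the simultaneous failure of (a)–(c). Ruling out the infinite regress in which the inverse's states remain bijective-over-$P$ without ever landing in $G$ is thus the principal technical point of the whole proof.
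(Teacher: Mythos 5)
Your overall architecture --- (b)$\Leftrightarrow$(c) by comparing the monoids attached to $\delta$ and $\delta'$, and (a)$\Leftrightarrow$(b) through the \'etale characterisation of Theorem~\ref{thm:9}, with (b)$\Rightarrow$(a) done by exhibiting explicit partial inverses --- is essentially the paper's, and those parts are sound in outline. The genuine gap is your sub-lemma, whose forward direction (``if the total element $q \in P$ is a partial isomorphism in $S_\J$ then $q \in G$'') you yourself flag as the principal technical point and then only gesture at. Unwinding the definitions, a total $q \in P$ already admits a partner in $S_\J$ as soon as its residuals lie in $G$ on a dense ideal $\spn{z_k}$ whose image words $w_k = z_k \star q$ are pairwise disjoint and span a dense ideal: the partner is $t = \bigvee_k w_k^\ast\,(\res{q}{z_k})^{-1}\,z_k$, a perfectly good total element of $M = \mathrm{Tot}(S_\J)$ which is built from \emph{localised inverses of residuals} and need not be (the image of) any element of $P$. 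So your sub-lemma asserts that any such $q$ must already have a two-sided inverse in the abstract monoid $P$; nothing in your sketch (``matching the states $r_k$ against inverses of the residuals\dots by well-foundedness\dots $t$ is total and equal to a single element of $P$'') addresses why the inverse function, which certainly exists and is again ``eventually-$G$'', should be realised by an element of $P$. The appeal to the ``faithful identification of $P$ with its action on $A^{-\omega}$'' does not bridge this: faithfulness of $\delta$ makes $P \to \mathrm{End}(A^{-\omega})$ injective, but invertibility of the image function is not the same as invertibility in $P$ (your own adding-machine example is exactly a case where an element is bijective on $A^{-\omega}$ yet not in $G$), and the partner $t$ lives in the much larger monoid $M$, not in $P$.

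The paper's proof is structured precisely so as never to need this claim. Given a partial inverse $q = \bigvee_i u_i^\ast q_i v_i$ of $w^\ast w p$, it computes $q\,w^\ast wp$ and $w^\ast wp\,q$ in normal form using~\eqref{eq:33}, and extracts only that the residuals $\res{p}{v_i}$ at the \emph{deeper} words $v_i$ occurring in the partner are invertible, with $\spn{v_i}$ dense in $\spn{w}$ --- not that $\res{p}{w}$ itself, let alone $p$, lies in $G$. That weaker conclusion is all that (b) requires, and it is what the calculation actually yields. Your stronger claimed equivalence ``the piece $u_i^\ast\,\res{p}{u_i}\,(u_i \star p)$ is a partial isomorphism iff $\res{p}{u_i} \in G$'' (which, via the identity $u(u^\ast q v)v^\ast = q$, is equivalent to your sub-lemma) is proved in neither direction beyond the easy ``if''; it may well be false in this generality, and your (a)$\Rightarrow$(b) collapses without it. To repair the argument you must either supply a genuine proof of the sub-lemma, or weaken it to the paper's statement --- at which point you are led back to the normal-form computation that constitutes the real content of the paper's proof.
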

Note the restriction in (c) is well-posed, since if $p \in P$ is
invertible, then each $\res p a$ is also invertible with inverse
$\res{\smash{p^{-1}}}{a \star p}$.

\begin{proof}
  We first show (b) $\Rightarrow$ (a). The theory of Nekrashevych
  $\delta$-algebras will be groupoidal just when the associated $S_\J$
  is \'etale; since each generator $a,a^\ast$ is already a partial
  isomorphism, this will be the case just when each $p \in S_\J$ is an
  admissible join of partial isomorphisms. So assuming (b), we have
  for each $p$ a dense ideal $I$ with $\res p w$ invertible for all
  $w \in I$. Thus for each $w \in I$, the map $w^\ast w p$ has partial
  inverse $(w \star p)^\ast (\res p w)^{-1} w$, since
  $(w \star p)^\ast (\res p w)^{-1} w w^\ast w p = (w \star p)^\ast
  (\res p w)^{-1} \res p w (w \star p) = (w \star p)^\ast (w \star p)
  = (w \star p)^+$ and
  $w^\ast w p (w \star p)^\ast (\res p w)^{-1} w = w^\ast \res p w (w
  \star p) (w \star p)^\ast (\res p w)^{-1} w = w^\ast \res p w (\res
  p w)^{-1} w = w^\ast w = w^+$. So if $I = \spn{u_i}$ then the
  expression $\bigvee_i u_i^\ast u_i p$ expresses $p$ as an admissible
  join of partial isomorphisms.

  Now, towards proving (a) $\Rightarrow$ (b), let $p \in P$ and
  suppose that for some $w \in A^\ast$, the map $w^\ast wp$ has a
  partial inverse $q$. We can write $q = \bigvee_i u_i^\ast q_i v_i$
  and by using the left equation of~\eqref{eq:33} where necessary we
  can assume each $v_i$ is at least as long as $w$. Now, we calculate
  that
  \begin{equation*}
    q w^\ast wp = \bigvee_i u_i^\ast q_i v_i w^\ast wp =
    \bigvee_{\substack{i \text{ s.t.} \\ v_i \in \spn{w}}} u_i^\ast q_i v_ip =
    \bigvee_{\substack{i \text{ s.t.} \\v_i \in \spn{w}}} u_i^\ast q_i \res p {v_i} (v_i \star p)\rlap{ ;}
  \end{equation*}
  but since this must equal $q^+ = \bigvee_i u_i^\ast u_i$, we must
  have for \emph{all} $i$ that $v_i \in \spn{w}$, that $q_i \res p {v_i}
  = 1$ and that $u_i = v_i \star p$. Now using the right
  equality in~\eqref{eq:33} we have:
  \begin{equation*}
    w^\ast wpq = \bigvee_i w^\ast wp u_i^\ast q_i v_i  = \bigvee_i \bigvee_{a \star p = u_i} w^\ast w a^\ast \res p a q_i v_i\rlap{ .}
  \end{equation*}
  This join must equal $w^\ast w$; but since in particular
  $v_i \star p = u_i$, the join includes the terms
  $w^\ast wv_i^\ast \res p {v_i}q_i v_i = v_i^\ast \res p {v_i}q_i
  v_i$, which must thus be restriction idempotents: and this is only
  possible if $\res p {v_i} q_i = 1$; but since already
  $q_i \res p {v_i} = 1$ we see that $\res p {v_i}$ has inverse $q_i$.
  Now any other $a$ with $a \star p = u_i$ must satisfy
  $\res p a q_i = 1$ and so $\res p a = \res p {v_i}$. Since also
  $a \star p = u_i = v_i \star p$ we have $a = v_i$ by fidelity of the
  action. Thus the join displayed above is equal to
  $\bigvee_i w^\ast w v_i^\ast \res p {v_i} q_i v_i = \bigvee_i w^\ast
  w v_i^\ast v_i = \bigvee_i v_i^\ast v_i$; since it also equals
  $w^\ast w$, the ideal $J_w$ generated by the $v_i$'s must be dense
  in $\spn{w}$.

  Now, suppose as in (a) that every $p \in P$ is a join of partial
  isomorphisms $p = \bigvee_i u_i^\ast u_i p$; then we have ideals
  $J_{u_i} \leqslant_d \spn{u_i}$ for each $i$ such that $\res p v$ is
  invertible for all $v \in J_{u_i}$. So taking $I = \sum_i J_{u_i}$
  we have
  $I = \sum_i J_{u_i} \leqslant_d \sum_i \spn{u_i} \leqslant_d M_0$ and
  $\res p w$ invertible for all $w \in I$, which gives (b).

  Next, for (c) $\Rightarrow$ (a), note that the theory of
  Nekrashevych $\delta'$-algebras trivially satisfies (b), and so is
  groupoidal; whence also the isomorphic theory of Nekrashevych
  $\delta$-algebras. Finally, to prove (b) $\Rightarrow$ (c), it
  suffices to show that the map of Grothendieck Boolean restriction
  monoids $S^{\delta'} \rightarrow S^\delta$ induced by the inclusion
  $G \subseteq P$ is invertible. It is injective since both
  $S^{\delta'}$ and $S^{\delta}$ are submonoids of the monoid of
  partial continuous endofunctions of $A^{-\omega}$; for surjectivity
  we need only show that each $p \in P$ is in its image. But letting
  $I = \spn{u_i}$ be a dense ideal as in (b), and using the left
  equation in~\eqref{eq:33} we can write
  $p = \bigvee_{i} u_i^\ast \res p {u_i} (p \star u_i)$; since each
  $\res p {u_i}$ lies in $G$, this provides the desired expression.
\end{proof}

\section{Cuntz--Krieger toposes}
\label{sec:cuntz-krieg-topos-1}

The Cuntz $C^\ast$-algebra on alphabet $A$ can be generalised to the
\emph{Cuntz--Krieger} $C^\ast$-algebra on a directed graph
$\mathbb{A}$~\cite{Kumjian1997Graphs}; the way in which the former
becomes a special case of the latter is by considering the graph with
a single vertex and $A$ self-loops. Correspondingly, the notion of
Leavitt algebra has a generalisation to the notion of \emph{Leavitt
  path algebra}, and both of these generalisations in fact come from a
generalisation of the Cuntz topological groupoid on $A$ to the ``path
groupoid'' on $\mathbb{A}$. In this final section, we explain how this
generalisation plays out from the perspective of cartesian closed
varieties.

The situation this time is subtly different. We will again describe a
topos which is a variety, but now it will be a \emph{many-sorted}
variety, with one sort for each vertex of $\mathbb{A}$. The
corresponding variety of $\BJM$-sets will not be the topos we started
from, but rather its two-valued collapse in the sense of
Section~\ref{sec:when-variety-topos}; indeed, by virtue of
Proposition~\ref{prop:29}, the topos we started from will instead be
the category of $\BJM$-\emph{sheaves} (Definition~\ref{def:28}). The
missing result we need is the following:

\begin{Prop}
  \label{prop:1}
  Let $\C$ be a many-sorted variety which is also a non-degenerate
  topos, and let $X \in \C$ be the free algebra on one generator of
  each sort. Then $\C_{\mathrm{tv}}$ is equivalent to a single-sorted
  cartesian closed variety $\V$, with $X$ corresponding under this
  equivalence to the free $\V$-algebra on one generator. Thus
  $\C_\mathrm{tv} \simeq \BJM\text-\cat{Set}$ where $\BJM$ is defined
  from $X$ as in Proposition~\ref{prop:44}.
\end{Prop}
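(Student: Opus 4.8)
The plan is to verify that the two-valued collapse $\C_{\mathrm{tv}}$ (Definition~\ref{def:27}) is a non-degenerate single-sorted cartesian closed variety in which $X$ is free on one generator, and then to quote Proposition~\ref{prop:44}. Write $\Sigma$ for the set of sorts of $\C$, $V \colon \C \rightarrow \cat{Set}^\Sigma$ for the forgetful functor and $F \dashv V$ for the free functor, so that $X = F(G) = \coprod_{\sigma \in \Sigma} F_\sigma$, where $G$ has one generator in each sort and $F_\sigma$ is free on a single generator of sort $\sigma$. As $X$ is the canonical regular generator of the variety $\C$ and $\C(X, 1) \cong \prod_\sigma (V1)_\sigma$ is a singleton (since $V$ preserves the terminal object), the map $X \rightarrow 1$ is a regular epimorphism; hence $X$ is well-supported and lies in $\C_{\mathrm{tv}}$.

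First I would check that $\C_{\mathrm{tv}}$, as a full subcategory, is closed under the cartesian closed structure of the topos $\C$. Finite products of well-supported objects are well-supported (epimorphisms being pullback-stable), while any product involving the strict initial object $0$ is $0$. For exponentials: $Z^0 \cong 1 \cong 0^0$ are well-supported; if $Y$ is well-supported then evaluation $0^Y \times Y \rightarrow 0$ and strictness of $0$ give $0^Y \cong 0$; and if $Y$ and $Z$ are well-supported then the constants map $Z \cong Z^1 \rightarrow Z^Y$ induced by $Y \rightarrow 1$, together with monotonicity of supports along morphisms, shows $Z^Y$ is well-supported. The same bookkeeping shows that every colimit of a diagram in $\C_{\mathrm{tv}}$ is again in $\C_{\mathrm{tv}}$ (it is well-supported as soon as any object of the diagram is, and initial otherwise), so $\C_{\mathrm{tv}}$ is cocomplete, with colimits and cartesian closed structure computed as in $\C$. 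In particular the copower $S \mapsto \coprod_S X$ lands in $\C_{\mathrm{tv}}$ and is left adjoint to $U \defeq \C_{\mathrm{tv}}(X, \thg) \colon \C_{\mathrm{tv}} \rightarrow \cat{Set}$; its value at the one-point set is $X$, exhibiting $X$ as free on one generator.

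It remains to prove that $U$, which by fullness equals the restriction of $\C(X, \thg) \cong \prod_\sigma (V \thg)_\sigma$, is monadic, so that $\C_{\mathrm{tv}}$ is a single-sorted variety. By the crude monadicity theorem it suffices that $U$ have a left adjoint (done), preserve reflexive coequalizers, and be conservative. Preservation holds because reflexive coequalizers are computed in $\C$, which is Barr-exact as a topos, so that each such coequalizer is an effective quotient; and effective quotients are preserved by the hom out of the regular-projective object $X$, free algebras being regular-projective. The conservativity of $U$ is the heart of the matter, and is where the two hypotheses on $\C$ must be combined: since $\C$ is a topos, an object $Z$ is well-supported precisely when $Z \rightarrow 1$ is a regular epimorphism, and since $\C$ is a variety each $F_\sigma$ is regular-projective, so the map $F_\sigma \rightarrow 1$ lifts through any such regular epimorphism. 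Thus a well-supported $Z$ has $(VZ)_\sigma \neq \emptyset$ for every $\sigma$, whereas an initial $Z$ has every $(VZ)_\sigma$ empty, so every object of $\C_{\mathrm{tv}}$ is, in this sense, two-valued across the sorts. For a morphism $f$ between such objects, bijectivity of $Uf = \prod_\sigma (Vf)_\sigma$ then forces each $(Vf)_\sigma$ to be a bijection, and since $V$ is conservative $f$ is invertible.

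This sortwise two-valuedness lemma is the step I expect to be the main obstacle: everything else is formal category theory, but this is exactly the point at which effectiveness of epimorphisms (the topos side) and projectivity of free algebras (the variety side) have to be played off against each other. Once $U$ is known to be monadic with $X$ free on one generator, $\C_{\mathrm{tv}}$ is a non-degenerate single-sorted cartesian closed variety, and Proposition~\ref{prop:44} applies to it verbatim, producing the Grothendieck matched pair $\BJM$ built from $X$ together with an equivalence $\C_{\mathrm{tv}} \simeq \BJM\text-\cat{Set}$, as required.
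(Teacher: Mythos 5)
Your overall skeleton (show that $\C_{\mathrm{tv}}$ is a non-degenerate single-sorted cartesian closed variety in which $X$ is free on one generator, then quote Proposition~\ref{prop:44}) is the same as the paper's, and several of your ingredients are sound: $X \rightarrow 1$ is a regular epimorphism, $\C_{\mathrm{tv}}$ is closed under the cartesian closed structure and under colimits of $\C$, the copower functor $S \mapsto \coprod_S X$ is left adjoint to $U = \C_{\mathrm{tv}}(X,\thg)$, and your "sortwise two-valuedness" argument for conservativity of $U$ (projectivity of each $F_\sigma$ against the effective epi $Z \rightarrow 1$) is correct and is genuinely the right way to see that step. Where the paper differs is that it does not reprove the variety structure at all: it observes that non-degeneracy plus strictness of the initial object means the presenting theory has no constants, and then cites Barr's \emph{The point of the empty set}~\cite{Barr1972Point} for the fact that the product-of-sorts functor exhibits $\C_{\mathrm{tv}}$ as a single-sorted variety, together with~\cite{Johnstone1990Collapsed} for cartesian closedness of the collapse.

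The genuine gap is your monadicity step, and it cannot be repaired as stated: $U$ does \emph{not} preserve reflexive coequalizers, so the crude monadicity theorem does not apply. Your justification conflates two different preservation statements. Projectivity of $X$ does show that $\C(X,\thg)$ preserves \emph{exact forks} $K \rightrightarrows B \rightarrow C$ with $K$ the kernel pair of the regular epi: surjectivity comes from projectivity, and injectivity because $K$, being an internal equivalence relation, has image in $\C(X,B)^2$ an honest external equivalence relation. But a reflexive coequalizer $A \rightrightarrows B \rightarrow C$ is not such a fork: the kernel pair of $B \rightarrow C$ is the internal equivalence relation \emph{generated} by the image $S$ of $A \rightarrow B \times B$, namely a countable union $\bigcup_n S^{\circ n}$ of relational composites. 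A map $X \rightarrow K$ lifts by projectivity through $\coprod_n S^{\circ n} \twoheadrightarrow K$, but $X$ then decomposes as an infinite coproduct landing in different stages, so two elements of $\C(X,B)$ identified in $\C(X,C)$ need not be joined by any \emph{finite} chain of elements of $\C(X,A)$. This failure actually occurs within the hypotheses of the Proposition: take $\C = \cat{Set}^{\mathbb{N}}$ (a presheaf topos and finitary many-sorted variety with no operations, as in Section~\ref{sec:presheaf-toposes}), so that $X = 1$ and $U = \prod_n (\thg)_n$. Let $B_n = \{0,\dots,n\}$ and $A_n \subseteq B_n \times B_n$ the reflexive symmetric relation $\{(i,j) : |i-j| \leqslant 1\}$. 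Each sortwise coequalizer $B_n/A_n$ is a point, so $U$ of the coequalizer is a singleton; but the equivalence relation on $\prod_n B_n$ generated by $\prod_n A_n$ relates two sequences only when their coordinatewise distances are \emph{uniformly} bounded, so $\mathrm{coeq}(UA \rightrightarrows UB)$ is infinite. Thus $U$ fails to preserve this reflexive coequalizer, even though $U$ is in fact monadic here (the resulting single-sorted theory being infinitary although $\C$ was finitary). Any correct argument must instead go through Beck's precise monadicity theorem for $U$-split pairs---which is delicate, since a $U$-split pair admits no sortwise splitting---or, as the paper does, through Barr's syntactic construction of the single-sorted theory.
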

\begin{proof}
  Since $\C$ is a non-degenerate topos, its initial object is
  strict, so the theory which presents it as a variety has no
  constants. Hence, by~\cite{Barr1972Point},
  $\C = \E_\mathrm{tv}$ is equivalent to a variety
  when equipped with the functor $\C \rightarrow \cat{Set}$ sending a
  model $(M(s) : s \in S)$ to $\prod_{s \in S}M(s)$. But this functor
  is just $\C(X, \thg)$, and as 
  in~\cite{Johnstone1990Collapsed}, $\C_{\mathrm{tv}}$ is cartesian
  closed since $\C$ is so.
\end{proof}

\subsection{Presheaf toposes}
\label{sec:presheaf-toposes}

Before considering groupoids associated to directed graphs, as a kind
of warm-up exercise we start with a simpler case of
Proposition~\ref{prop:1} wherein $\C$ is a presheaf category.

Given our ongoing conventions, it will be most convenient to look at a
\emph{covariant} presheaf category $[\mathbb{A}, \cat{Set}]$. We call
objects $X \in [\mathbb{A}, \cat{Set}]$ \emph{left
  $\mathbb{A}$-sets}, and present them as a family of sets $X_a$
indexed by the objects of $\mathbb{A}$, together with reindexing
operators $f \cdot (\thg) \colon X_a \rightarrow X_b$ for every
morphism $f \colon a \rightarrow b$ of $\mathbb{A}$, satisfying the
usual associativity and unitality axioms. The cartesian closed variety
$[\mathbb{A}, \cat{Set}]_{\mathrm{tv}}$ to which this collapses is the
variety of left $\mathbb{A}$-sets for which either all $X_a$'s are
empty or all $X_a$'s are non-empty. An explicit theory presenting this
variety was given in~\cite[Example~8.7]{Johnstone1990Collapsed}; our
objective is to present it as a variety of $\BJM$-sets.

Now, $[\mathbb{A}, \cat{Set}]$ is a variety with set of sorts
$\mathrm{ob}(\mathbb{A})$, and the free object on one generator of
each sort is the $\mathbb{A}$-set, which will denote simply by
$\mathbb{A}$, for which $\mathbb{A}_a$ is the set of all morphisms of
$\mathbb{A}$ with codomain $a$, and for which
$f \cdot (\thg) \colon \mathbb{A}_a \rightarrow \mathbb{A}_b$ is given
by postcomposition. Now by Proposition~\ref{prop:1}, the monoid
$M$ and Boolean algebra $B$ can be found as
$[\mathbb{A}, \cat{Set}](\mathbb{A}, \mathbb{A})$ and
$[\mathbb{A}, \cat{Set}](\mathbb{A},1+1)$ respectively.

On the one hand, a map $\mathbb{A} \rightarrow \mathbb{A}$ in
$[\mathbb{A}, \cat{Set}]$ is by freeness determined uniquely by
elements $f_a \in \mathbb{A}_a$ for each $a \in \mathbb{A}$; thus, an
element $f \in M$ comprises a family of objects
$(f^\ast a)_{a \in \mathrm{ob}(\mathbb{A})}$ and a family of arrows
$(f_a \colon f^\ast a \rightarrow a)_{a \in \mathrm{ob}(\mathbb{A})}$
of $\mathbb{A}$. It is now easy to see that the unit of $M$ is
$(1_a \colon a \rightarrow a)_{a \in A}$, while the product of $f$ and
$g$ is characterised by
$(f \cdot g)_a = f_a \circ g_{f^\ast{a}} \colon g^\ast f^\ast a
\rightarrow f^\ast a \rightarrow a$. In the nomenclature
of~\cite[Chapter~I.5]{Aguiar1997Internal}, $M$ is the \emph{monoid of
  admissible sections} of $\mathbb{A}$.

On the other hand, the $\mathbb{A}$-set $1+1$ has
$(1+1)_a = \{\top, \bot\}$ for all objects $a$; whence an
$\mathbb{A}$-set map $\mathbb{A} \rightarrow 1+1$ amounts to a
function $\mathrm{ob}(\mathbb{A}) \rightarrow \{\top, \bot\}$. It
follows easily that $B$ is the power-set Boolean algebra
$\P(\mathrm{ob}(\mathbb{A}))$, and that, in the infinite case, the
zero-dimensional topology $\J$ comprises \emph{all} partitions of
$\P(\mathrm{ob}(\mathbb{A}))$. Similar straightforward calculations
now show that:
\begin{itemize}
\item $f\in M$ acts on
  $U \in B$ to yield 
  $f^\ast(U) = \{a \in \mathrm{ob}(\mathbb{A}) : f^\ast a \in U\} \in B$.
\item $U \in B$ acts on $f,g \in M$ to yield the $U(f,g) \in M$ with
  $U(f,g)_a = f_a$ for $a \in U$ and $U(f,g)_a = g_a$ for $a \notin U$.
\end{itemize}
Now, if $X \in [\mathbb{A}, \cat{Set}]_\mathrm{tv}$ then the set
$\tilde X = [\mathbb{A}, \cat{Set}](\mathbb{A},X) = \prod_{a \in A}
X_a$ becomes a $\BJM$-set as in Proposition~\ref{prop:44};
explicitly, if $x,y \in \tilde{X}$, $f \in M$ and $U \in B$, then:
\begin{itemize}
\item $f \cdot x \in \tilde{X}$ is given by $(f \cdot x)_a =
f_a \cdot x_{f^\ast a}$;
\item $U(x,y) \in \tilde{X}$ is given by $U(x,y)_a = x_a$
  for $a \in U$ and $U(x,y)_a = y_a$ for $a \notin U$.
\end{itemize}

We are once again in the situation where there are enough $\J$-closed
ideals in $\BJ$ to separate elements, so that there is a topological,
rather than localic, perspective on $\BJM$. Indeed, $\BJ$ is the
Grothendieck Boolean algebra of clopen sets of the discrete space
$\mathrm{ob}(\mathbb{A})$, and under this correspondence, the action
of $f \in M$ on $B$ is given by inverse image under the function
$a \mapsto f^\ast a$. It follows from this that the classifying localic
category of $\BJM$ is again spatial, and is simply the \emph{discrete}
topological category $\mathbb{A}$. Of course, this topological
category is a groupoid just when $\mathbb{A}$ is a groupoid, and so
this characterises when the cartesian closed variety
$[\mathbb{A}, \cat{Set}]_\mathrm{tv}$ is groupoidal. On the other
hand, $\mathbb{A}$ is minimal, so that
$[\mathbb{A}, \cat{Set}]_\mathrm{tv} = [\mathbb{A}, \cat{Set}]$ is a
topos, just when every object of $\mathbb{A}$ admits an arrow to every
other object of $\mathbb{A}$; which is to say that $\mathbb{A}$ is
\emph{strongly connected} in the sense
of~\cite[Example~8.7]{Johnstone1990Collapsed}.

\subsection{Cuntz--Krieger toposes}
\label{sec:cuntz-krieg-topos}
We now describe the cartesian closed varieties which correspond to
Cuntz--Krieger $C^\ast$-algebras associated to directed graphs. As
explained, these varieties will be obtained from many-sorted varieties
which are (Grothendieck) toposes. These toposes were were introduced
by Leinster~\cite{Leinster2007Jonsson}, with the connection to
operator algebra being made explicit in~\cite[\sec
5]{Henry2016Convolution}.

\begin{Defn}
  \label{def:41}
  Let $\mathbb{A}$ be a directed graph, that is, a pair of sets
  $A_1, A_0$ together with source and target functions
  $s, t \colon A_1 \rightrightarrows A_0$. As usual, we write
  $e \colon v \rightarrow v'$ to indicate that $e \in A_1$ with
  $s(e) = v$ and $t(e) = v'$, and we will also make use of the sets
  $s^{-1}(v)$ of all edges in $\mathbb{A}$ with a given fixed source
  $v$. Now a \emph{Cuntz--Krieger $\mathbb{A}$-algebra} is a family of
  sets $(X_v : v \in A_0)$ together with, for each $v \in A_0$, a
  specified isomorphism between $X_v$ and the set
  \begin{equation*}
    \prod_{e \in s^{-1}(v)} X_{t(e)} = \prod_{e \colon v \rightarrow v'} X_{v'}\rlap{ .}
  \end{equation*}
  We write $\C\K_{\mathbb{A}}$ for the many-sorted variety of
  Cuntz--Krieger $\mathbb{A}$-algebras.
\end{Defn}

As shown in~\cite{Leinster2007Jonsson, Henry2016Convolution},
$\C\K_\mathbb{A}$ is a topos. To see this, we first define
$\mathbb{A}^\ast$ to be the \emph{free category} on the graph
$\mathbb{A}$, whose objects are vertices of $\mathbb{A}$, and whose
morphisms $v \rightarrow w$ are finite paths of edges from $v$ to $w$,
i.e.:
\begin{equation*}
  \mathbb{A}(v,w) = \{\,e_n \cdots e_1 \mid s(e_1) = v, t(e_{i}) = s(e_{i+1}), t(e_n) = w\,\}\rlap{ ,}
\end{equation*}
where by convention $\mathbb{A}(v,v)$ also contains the empty path
$\epsilon_v$ from $v$ to $v$. Now a left $\mathbb{A}^\ast$-set $X$ is
the same as a family of sets $(X_v : v \in A_0)$ together with
functions $e \cdot (\thg) \colon X_v \rightarrow X_{v'}$ for each edge
$e \colon v \rightarrow v'$ of $\mathbb{A}$. We can endow
$\mathbb{A}^\ast$ with a topology by requiring that, for each object
$v$, the family $(e \colon v \rightarrow v' \mid e \in s^{-1}(v))$ is a
cover of $v$ (note that, since we are taking covariant presheaves, a
covering family is a family of morphisms with common \emph{domain},
rather than common \emph{codomain}). Now as explained
in~\cite{Henry2016Convolution}, a left $\mathbb{A}^\ast$-set $X$ will
satisfy the sheaf condition for this topology just when, for each
vertex $v$, the map
$X_v \rightarrow \prod_{e \in s^{-1}(v)} X_{t(e)}$ induced by
the functions $e \cdot (\thg) \colon X_v \rightarrow X_{v'}$ is an
isomorphism. Thus $\C\K_{\mathbb{A}} \simeq \cat{Sh}(\mathbb{A}^\ast)$
as claimed.

In the single-sorted case, we described $\J\T_A$ in terms of a
localisation of the category of \emph{separated} left $A^\ast$-sets.
We can proceed in exactly the same way here. Unfolding the
definitions yields:
\begin{Defn}
  \label{def:42}
  Given a left $\mathbb{A}^\ast$-set $X$ and a
  sub-left-$\mathbb{A}^\ast$-set $Y \leqslant X$:
  \begin{itemize}
  \item $X$ is \emph{separated} if $x,y \in X_v$ are equal whenever
    $e \cdot x = e \cdot y$ for all $e \in s^{-1}(v)$.
  \item $Y \leqslant X$ is \emph{closed} if any $x \in
    X_v$ with $e \cdot x \in Y_{t(e)}$ for all $e \in s^{-1}(v)$ is in
    $Y_v$.
  \item $Y \leqslant X$ is \emph{dense} if the closure of $Y$ in $X$ is $X$.
  \end{itemize}
\end{Defn}

With these definitions in place, we can now identify the
Cuntz--Krieger topos $\C\K_\mathbb{A}$, just like before, with the
category $\C\K'_\mathbb{A}$ of maximal dense partial maps between
separated left $\mathbb{A}^\ast$-sets, with composition given by
partial map composition followed by maximal extension. We now use this
to describe the matched pair $\BJM$ which presents the cartesian
closed variety $(\C\K_{\mathbb{A}})_{\mathrm{tv}}$.

First, as we saw in the preceding section, the free left
$\mathbb{A}^\ast$-set on one generator of each sort is
$\mathbb{A}^\ast$ acting on itself by composition: thus,
$(\mathbb{A}^\ast)_v$ is the set of all finite $\mathbb{A}$-paths
$e_n e_{n-1} \cdots e_1$ ending at the vertex $v$, and the function
$(\mathbb{A}^\ast)_v \rightarrow (\mathbb{A}^\ast)_{v'}$ induced by an
edge $e \colon v \rightarrow v'$ simply appends $e$ to the end of the
path: $e \cdot (e_n \cdots e_1) = ee_n \cdots e_1$. Clearly
$\mathbb{A}^\ast$ is separated as an $\mathbb{A}^\ast$-set, and so the
monoid $M$ is equally well the monoid
$\C\K'(\mathbb{A}^\ast,\mathbb{A}^\ast)$ of all maximal dense partial
left $\mathbb{A}^\ast$-set maps
$\mathbb{A}^\ast \rightarrow \mathbb{A}^\ast$.
Now, a sub-$\mathbb{A}^\ast$-set $I \leqslant \mathbb{A}^\ast$ is an
\emph{ideal} of $\mathbb{A}^\ast$: that is, a collection
$I \subseteq \mathrm{mor}(\mathbb{A}^\ast)$ of morphisms of
$\mathbb{A}^\ast$ which is closed under postcomposition, and as
before, we can be more explicit about the \emph{dense} ideals.
Intuitively, these are given by a family $(\tau_a : a \in A_0)$ of
well-founded trees, where:
\begin{itemize}
\item Each vertex of each tree is labelled by a vertex of
  $\mathbb{A}$;
\item The child edges of a $v$-labelled vertex are labelled
  bijectively by edges $e \in s^{-1}(v)$, with the far end of the
  $e$-labelled edge being a $t(e)$-labelled vertex; and
\item The root of each $\tau_a$ is labelled by $a$.
\end{itemize}
Such a family of trees can, as before, be specified by listing the
addresses of its leaves, where the ``address'' of a leaf is now the
path of edges to the leaf from the root. These addresses generate an
ideal of $\mathbb{A}^\ast$, and well-foundedness assures that the
ideals so arising should be the dense ones. Said algebraically,
this becomes the following generalisation of Lemma~\ref{lem:4}; the
proof is, \emph{mutatis mutandis}, the same.
\begin{Lemma}
  \label{lem:8}
  An ideal $I \leqslant \mathbb{A}^\ast$ is dense if, and only if,
  each infinite path of edges $\cdots e_3 e_2 e_1$ has a finite initial
  segment $e_n \cdots e_1$ in $I$.
\end{Lemma}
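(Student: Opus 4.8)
The plan is to adapt the transfinite-closure argument of Lemma~\ref{lem:4} to the graph setting, where the only genuine change is bookkeeping the vertex-labels. First I would compute the closure $I'$ of the ideal $I$ by the same transfinite recursion: set $I_0 = I$, set $I_{\alpha+1} = \{ w \in \mathbb{A}^\ast : ew \in I_\alpha \text{ for all } e \in s^{-1}(t(w)) \}$ (note that in place of the uniform $A$ we must now quantify over the edges \emph{out of the endpoint of $w$}, i.e.\ over $s^{-1}(t(w))$, since these are exactly the edges that can be prepended to a path ending where $w$ begins---recall our paths are read right-to-left), and at limit stages take $I_\gamma = \bigcup_{\alpha < \gamma} I_\alpha$. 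By Hartog's lemma this stabilises at some $\lambda$ with $I_\lambda = I'$, and $I$ is dense precisely when every vertex $v$ has its empty path $\epsilon_v \in I_\lambda$.

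For the forward direction, assume $I' = I_\lambda = \mathbb{A}^\ast$ and fix an infinite path $W = \cdots e_3 e_2 e_1$. Writing $W|_n$ for the initial segment $e_n \cdots e_1$, I would define $\alpha_n = \min\{\gamma \leqslant \lambda : W|_n \in I_\gamma\}$; this minimum is over a non-empty set since $\epsilon_{s(e_1)} \in I_\lambda$. The key observation, exactly as before, is that if $\alpha_n > 0$ then the recursion forces $\alpha_{n+1} < \alpha_n$: for $W|_n \in I_{\alpha_n}$ can only hold at a successor stage by having $e' (W|_n) \in I_{\alpha_n - 1}$ for every $e' \in s^{-1}(t(W|_n))$, and $e_{n+1}$ is one such edge, so $W|_{n+1} = e_{n+1}(W|_n) \in I_{\alpha_n - 1}$. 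By well-foundedness of the ordinals we must reach $\alpha_n = 0$ for some $n$, i.e.\ $W|_n \in I$.

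For the converse, suppose every infinite path has an initial segment in $I$, and suppose towards a contradiction that some $\epsilon_v \notin I_\lambda$. Since $I_\lambda = I_{\lambda+1}$, for every path $w \notin I_\lambda$ there is some edge $e \in s^{-1}(t(w))$ with $e w \notin I_\lambda$. Starting from $\epsilon_v$ and making countably many dependent choices, I obtain paths $\epsilon_v, e_1, e_2 e_1, \dots$ and hence an infinite path $W = \cdots e_2 e_1$ no initial segment of which lies in $I_\lambda$, and so none in $I$---contradicting the hypothesis. The only real care needed, and the one point where this differs from Lemma~\ref{lem:4}, is that at each step the available edges are governed by the source-set $s^{-1}(t(w))$ of the current endpoint rather than a fixed alphabet; but since every vertex-label admits \emph{some} outgoing edge in the non-degenerate setting this causes no difficulty, and otherwise the argument is \emph{mutatis mutandis} identical. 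I would therefore state it briefly and refer back to the proof of Lemma~\ref{lem:4} for the routine parts.
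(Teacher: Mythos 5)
Your proof is correct and coincides with the paper's: the paper proves Lemma~\ref{lem:8} simply by declaring it \emph{mutatis mutandis} the same as Lemma~\ref{lem:4}, and your transfinite closure $I_{\alpha+1} = \{w \in \mathbb{A}^\ast : ew \in I_\alpha \text{ for all } e \in s^{-1}(t(w))\}$, together with the decreasing-ordinal argument and the dependent-choice construction, is precisely the required adaptation. One small remark: your closing appeal to ``every vertex-label admits some outgoing edge in the non-degenerate setting'' is both unnecessary and not guaranteed (directed graphs here may have sinks); it is also moot, because, as you yourself note earlier, $w \notin I_\lambda = I_{\lambda+1}$ already forces the existence of some $e \in s^{-1}(t(w))$ with $ew \notin I_\lambda$ --- if $s^{-1}(t(w))$ were empty, the vacuous universal quantification would instead place $w$ in $I_{\lambda+1}$.
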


Similarly, we can characterise the Boolean algebra
$B = \C\K'(\mathbb{A}^\ast, 1+1)$ as comprising all complemented
closed ideals of $\mathbb{A}^\ast$, for which we have the following
recognition result generalising Lemma~\ref{lem:5}. Here, we write in
the obvious manner $\mathbb{A}^\ast w$ for the ideal generated by a
finite path $w$.
\begin{Lemma}
  \label{lem:9}
  A closed ideal $I \leqslant_c \mathbb{A}^\ast$ has a complement if,
  and only if, for every infinite path of edges $\cdots e_3 e_2 e_1$
  of $\mathbb{A}$ there is a finite initial segment $w = e_n \cdots e_1$
  of $W$ for which either $\mathbb{A}^\ast w \leqslant I$ or
  $\mathbb{A}^\ast w \cap I = \emptyset$.
\end{Lemma}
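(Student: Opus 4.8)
The plan is to mirror the proof of Lemma~\ref{lem:5}, replacing the density criterion of Lemma~\ref{lem:4} by its graph-theoretic analogue Lemma~\ref{lem:8}. Recall that a closed ideal $I$ has a complement precisely when there is a closed ideal $I'$ with $I \cap I' = \emptyset$ and $I + I'$ dense, so the two directions of the biconditional correspond to producing such an $I'$ and extracting the initial-segment condition from it.

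For the forward direction I would suppose that $I$ has a complement $I'$, so that $I + I'$ is dense. Given any infinite path $\cdots e_3 e_2 e_1$, Lemma~\ref{lem:8} supplies a finite initial segment $w = e_n \cdots e_1 \in I + I'$. If $w \in I$ then, $I$ being an ideal, $\mathbb{A}^\ast w \leqslant I$; if instead $w \in I'$, then $\mathbb{A}^\ast w \leqslant I'$, and disjointness of $I$ and $I'$ forces $\mathbb{A}^\ast w \cap I = \emptyset$. This is exactly the stated condition. For the converse I would define $I' = \{w \in \mathbb{A}^\ast : \mathbb{A}^\ast w \cap I = \emptyset\}$ and check that it is a closed ideal disjoint from $I$. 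Disjointness is immediate, since $w \in \mathbb{A}^\ast w$; that $I'$ is an ideal follows because $\mathbb{A}^\ast(uw) \leqslant \mathbb{A}^\ast w$ for any composable $u$. Density of $I + I'$ is then read off from the hypothesis: each infinite path has a finite initial segment $w$ with either $\mathbb{A}^\ast w \leqslant I$, whence $w \in I$, or $\mathbb{A}^\ast w \cap I = \emptyset$, whence $w \in I'$; in either case $w \in I + I'$, and Lemma~\ref{lem:8} gives density.

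The step requiring the most care---and the only place where the passage from a free monoid to a free category genuinely bites---is checking that $I'$ is \emph{closed}. Unwinding Definition~\ref{def:42}, I must show that if $x$ is a path with $ex \in I'$ for every edge $e \in s^{-1}(t(x))$ out of its terminal vertex, then $x \in I'$. Arguing contrapositively, suppose $\mathbb{A}^\ast x \cap I \neq \emptyset$ and pick $ux \in I$. When $u$ is empty we get $x \in I$ and hence $ex \in I$ for every outgoing edge $e$, contradicting $ex \in I' $ together with $I \cap I' = \emptyset$; when $u$ is nonempty, writing $u = u'e$ with first edge $e \in s^{-1}(t(x))$ exhibits $ux \in \mathbb{A}^\ast(ex) \cap I$, again contradicting $ex \in I'$.

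I expect the main obstacle to be the vertices of $\mathbb{A}$ that are \textbf{sinks}: for such a $t(x)$ the set $s^{-1}(t(x))$ is empty, the closedness premise becomes vacuous, and the argument above has no outgoing edge $e$ to exploit. This case is harmless on the infinite-path side of the statement, since no sink lies along an infinite path, but it does mean the naive $I'$ need not be closed when $I$ contains paths terminating at a sink. I would handle this exactly as the vacuous clause of Definition~\ref{def:42} dictates---every closed ideal, hence the complement we build, automatically contains all paths ending at a sink---and absorb those paths into $I'$ separately, so that the clean edge-pushing argument is only invoked at non-sink vertices. Once this bookkeeping is in place, the remainder is \emph{mutatis mutandis} identical to Lemma~\ref{lem:5}.
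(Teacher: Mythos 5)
Your core argument is correct and is exactly the intended one: the paper offers no separate proof of this lemma, deferring to the proof of Lemma~\ref{lem:5} \emph{mutatis mutandis}, and both of your directions---the forward one extracting the initial segment from density of $I + I'$ via Lemma~\ref{lem:8}, and the converse one defining $I' = \{w \in \mathbb{A}^\ast : \mathbb{A}^\ast w \cap I = \emptyset\}$ and checking it is an ideal, disjoint from $I$, with $I + I'$ dense---reproduce that proof faithfully. Your explicit verification that $I'$ is closed at non-sink vertices correctly fills in what Lemma~\ref{lem:5}'s proof dismisses as ``easy to see''.

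However, your final paragraph on sinks is not harmless bookkeeping but a genuine error. You correctly observe that, by the vacuous clause of Definition~\ref{def:42}, \emph{every} closed ideal contains every path ending at a sink; but this applies to $I$ itself, not just to the complement you are building. So if you enlarge $I'$ by adjoining all sink-terminating paths to restore closedness, then $I \cap I'$ contains all of those paths, and $(I, I')$ fails the disjointness required of a complementary pair. No repair of this kind can work: as soon as $\mathbb{A}$ has a sink $v$, the empty path $\epsilon_v$ lies in every closed ideal, so no two closed ideals are disjoint and \emph{no} closed ideal has a complement---while the right-hand condition of the lemma can still hold (for instance it holds for $I = \mathbb{A}^\ast$, vacuously so if $\mathbb{A}$ has no infinite paths). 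In other words, in the presence of sinks the lemma is simply false, and your patched $I'$ proves nothing. The correct resolution is not to modify $I'$ but to recognise the standing hypothesis, tacit in this section as in the graph-groupoid literature it draws on, that $\mathbb{A}$ has no sinks, i.e.\ each $s^{-1}(v)$ is non-empty. (Without it, other claims of the section also break: for example, if some edge points into a sink then $\mathbb{A}^\ast$ is not even separated as a left $\mathbb{A}^\ast$-set, so the descriptions of $M$ and $B$ as maximal dense partial maps out of $\mathbb{A}^\ast$ would already be unavailable.) Under that hypothesis your own contrapositive argument establishes closedness of the naive $I'$ at every vertex, there is no special case to absorb, and the proof is complete and identical to the paper's.
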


With these results in place, the description of the zero-dimensional
topology on $B$ and the actions of $M$ and $B$ on each other goes
through \emph{mutatis mutandis} as before. Once again, there are
enough $\J$-closed ideals to separate elements of $B$, and so there is
a legitimate topological perspective on these data. Indeed, $B_\J$ in
this case is the Grothendieck Boolean algebra of clopen sets of the
\emph{infinite path space} $\mathbb{A}^{-\omega}$, whose elements are
infinite paths $\cdots e_2 e_1 e_0$ in $\mathbb{A}$ starting at any
vertex of $\mathbb{A}$, and whose topology is generated by the basic
clopen sets $[e_n \cdots e_1]$ of all paths which have
$e_n \cdots e_1$ as an initial segment.

We can now use this to describe the continuous map
$\varphi \colon \mathbb{A}^{-\omega} \rightarrow \mathbb{A}^{-\omega}$
induced by a maximal dense partial map
$(I,f) \colon \mathbb{A}^\ast \rightarrow \mathbb{A}^\ast$. First, we
can like before find a basis $\{p_i\}$ of minimal-length paths for the
dense ideal $I$. Suppose that each $p_i$ is a path from $u_i$ to
$v_i$; then $q_i = f(p_i)$ is some \emph{other} path with target $v_i$
and source, say, $w_i$. One way to visualise this is in terms of the
family of well-founded trees $(\tau_a : a \in A_0)$ associated to the
dense ideal $I$; the maximal-length directed paths from the root are
labelled by the basis elements $p_i$, and we can imagine the
$v_i$-labelled leaf at the end of each of these paths as having the
path $q_i$, which also ends at $v_i$, attached to it. Now the set of
pairs of paths $\{(p_i, q_i)\}$ completely specify $(I,f)$'s action on
infinite paths as being the function
$\varphi \colon \mathbb{A}^{-\omega} \rightarrow \mathbb{A}^{-\omega}$
given by:
\begin{equation}
  \label{eq:59}
  \varphi(W'p_i) = W'q_i \qquad \text{for all $W' \in \mathbb{A}^{-\omega}$ starting at $t(p_i)$.}
\end{equation}

From this description, it follows that the classifying topological
category of $(\C\K_\mathbb{A})_{\mathrm{tv}}$ is the category whose
space of objects is $\mathbb{A}^{-\omega}$, and whose morphisms
$W \rightarrow W'$ are germs at $W$ of continuous functions of the
form~\eqref{eq:59} with $\varphi(W) = W'$. It is not hard to identify
such germs with integers $i$ such that $W_n = W'_{i+n}$ for
sufficiently large $n$, so that the classifying topological category is
the well-known \emph{path groupoid} $P(\mathbb{A})$ of
$\mathbb{A}$~\cite{Kumjian1997Graphs}.

Of course, we conclude from this that the theory of Cuntz--Krieger
$\mathbb{A}$-algebras is groupoidal. On the other hand, it is not
necessarily the case that $(\C\K_\mathbb{A})_\mathrm{tv}$ is a topos.
This will be so just when, in fact,
$(\C\K_\mathbb{A})_\mathrm{tv} = \C\K_\mathbb{A}$, or equivalently,
just when the path groupoid is minimal, the condition for which is
well known in the literature. We sketch another proof of this fact
which exploits our ideal-theoretic perspective.

\begin{Defn}
  \label{def:43}
  Let $\mathbb{A}$ be a directed graph. A vertex $v$ of $\mathbb{A}$
  is \emph{cofinal} if for any infinite path
  $\cdots v_2 \xleftarrow{e_2} v_1 \xleftarrow{e_1} v_1
  \xleftarrow{e_0} v_0$ in $\mathbb{A}$ there is some $k$ for which
  there exists a finite path from $v$ to $v_k$.
\end{Defn}

\begin{Prop}
  \label{prop:50}
  For any directed graph $\mathbb{A}$, the following are equivalent:
  \begin{enumerate}[(a)]
  \item The cartesian closed variety $(\C\K_\mathbb{A})_\mathrm{tv}$
    is a topos (and thus equal to $\C\K_\mathbb{A}$);
  \item Every vertex of $\mathbb{A}$ is cofinal.
  \end{enumerate}
\end{Prop}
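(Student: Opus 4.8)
The plan is to deduce this from the minimality criterion of \cref{thm:8}, reading everything through the spatial description of $\BJM$ developed above. By that theorem, $(\C\K_\mathbb{A})_\mathrm{tv}$ is a topos if and only if the classifying (here spatial) category is minimal, which in turn holds if and only if for every $b \neq 0$ in $B$ there is some $m \in M$ with $m^\ast b = 1$. Since $m^\ast b$ is computed spatially as the inverse image $\varphi^{-1}(\hat b)$ of the clopen $\hat b \subseteq \mathbb{A}^{-\omega}$ along the continuous endomorphism $\varphi$ of the form~\eqref{eq:59} determined by $m$, the condition $m^\ast b = 1$ says exactly that $\mathrm{im}(\varphi) \subseteq \hat b$. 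So the whole statement reduces to showing that every clopen $\hat b \neq \emptyset$ contains the image of some endomorphism~\eqref{eq:59} if and only if every vertex of $\mathbb{A}$ is cofinal.

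The technical heart will be a single combinatorial lemma matching cofinality with a density property of ideals: for a fixed vertex $v$, I would prove the equivalence of (i) $v$ is cofinal; and (ii) there is a dense ideal $I \leqslant \mathbb{A}^\ast$ every one of whose basis paths $p_i$ has target $t(p_i)$ reachable from $v$ by a finite path. For (i)$\Rightarrow$(ii) I would take $R$ to be the set of vertices reachable from $v$ and let $I$ consist of all finite paths some initial (i.e.\ rightmost) segment of which has target in $R$; this is visibly an ideal, its density is precisely the content of cofinality via \cref{lem:8}, and each minimal generator ends in $R$. For (ii)$\Rightarrow$(i), given an infinite path $W = \cdots e_2 e_1 e_0$ through vertices $v_0, v_1, \dots$, density and \cref{lem:8} force some basis path $p_i$ to be an initial segment of $W$, so that $t(p_i) = v_{\lvert p_i\rvert}$ is reachable from $v$, which is cofinality.

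With this lemma in hand the two implications are quick. For the ``if'' direction, given $b \neq 0$ I would shrink $\hat b$ to a nonempty basic clopen $[p]$ and put $v = t(p)$; cofinality of $v$ supplies a dense ideal with basis $\{p_i\}$ and finite paths $r_i \colon v \rightarrow t(p_i)$, and setting $q_i = r_i p$ (so each $q_i$ begins with $p$ and ends at $t(p_i)$) defines via~\eqref{eq:59} an endomorphism $\varphi$ with $\mathrm{im}(\varphi) \subseteq [p] \subseteq \hat b$; monotonicity of $m^\ast$ then gives $m^\ast b = 1$, and \cref{thm:8} yields the topos property. For the ``only if'' direction I would run this in reverse at the vertex clopen $b = [\epsilon_v] = \{W : s(W) = v\}$: an $m$ with $m^\ast b = 1$ forces each $q_i$ realised by an actual infinite path to start at $v$, so that the corresponding $q_i \colon v \rightarrow t(p_i)$ witnesses $t(p_i)$ as reachable from $v$, whence $v$ is cofinal by part (ii)$\Rightarrow$(i).

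The main obstacle I anticipate is the bookkeeping forced by the reading conventions (paths grow to the left, ``initial segment'' means rightmost, composition is diagrammatic), so that the equivalence ``$\mathrm{im}(\varphi) \subseteq [p]$ iff each $q_i$ begins with $p$'' and the identification of $[\epsilon_v]$ with the clopen of paths sourced at $v$ come out with the right variances; this is where an error is most likely to creep in. A genuine edge case is a vertex $v$ emitting no infinite path, where $b = [\epsilon_v] = 0$ and the ``only if'' argument is vacuous: here I would either invoke the standing no-sinks hypothesis, under which every vertex emits an infinite path so that $[\epsilon_v] \neq 0$ always (as is already implicit in \cref{lem:8} and \cref{lem:9}), or else dispatch such sink-like vertices directly.
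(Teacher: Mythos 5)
Your proposal is correct and takes essentially the same route as the paper's proof: both reduce the statement to the criterion of Theorem~\ref{thm:8}, and your combinatorial lemma is the paper's construction in disguise---your ideal of paths meeting the forward-reachable set $R$ of a vertex coincides with the paper's ideal $J$ of paths whose target is reachable (since $R$ is closed under following edges forwards), your assignment $q_i = r_i p$ is exactly the paper's $f(q_i) = r_i \cdot p$, and reading $m^\ast b = 1$ spatially as $\im(\varphi) \subseteq \hat b$ rather than ideal-theoretically as ``$f^{-1}(I)$ dense in $\mathbb{A}^\ast$'' merely chooses the other side of the dictionary the paper itself sets up. One caution on your final paragraph: the sink case cannot be ``dispatched directly,'' because the proposition is genuinely false for graphs with sinks---take $\mathbb{A}$ to be a single loop at $w$ plus an isolated vertex $v$; then $(\C\K_{\mathbb{A}})_{\mathrm{tv}} = \C\K_{\mathbb{A}} \simeq \mathbb{Z}\text{-}\cat{Set}$ is a topos while $v$ is not cofinal---so the no-sink hypothesis you propose to invoke really is needed, and is left implicit by the paper as well (its own proof requires $\mathbb{A}^\ast v$ to be a nonzero element of $B$, and the identification of $B$ with the complemented closed ideals in Section~\ref{sec:cuntz-krieg-topos} already presupposes it).
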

\begin{proof}
  We first prove (a) $\Rightarrow$ (b). Given a vertex $v$ of
  $\mathbb{A}$, consider $b \in B$ given by the closed complemented
  ideal $\mathbb{A}^\ast v \leqslant \mathbb{A}^\ast$ of all paths
  starting at the vertex $v$. Since (a) holds, by Theorem~\ref{thm:8}
  there must exist $m \in M$ with $m^\ast b = 1$, i.e., there is a
  maximal dense partial map
  $(I,f) \colon \mathbb{A}^\ast \rightarrow \mathbb{A}^\ast$ with
  $f^{-1}(\mathbb{A}^\ast v)$ dense in $\mathbb{A}^\ast$. Thus, for
  any infinite path
  $\cdots v_2 \xleftarrow{e_2} v_1 \xleftarrow{e_1} v_1
  \xleftarrow{e_0} v_0$ there is some $k$ for which
  $e_k \cdots e_0 \in f^{-1}(\mathbb{A}^\ast v)$. But this says that
  $f(e_k \cdots e_0)$ is a path starting at $v$ and ending, like
  $e_k \cdots e_0$, at $v_k$, which shows that $v$ is cofinal in
  $\mathbb{A}$.

  Conversely, suppose every vertex is cofinal in $\mathbb{A}$, and let
  $b \neq 0 \in B$; we must find some $m \in M$ with $m^\ast b = 1$.
  Now $b$ is a non-empty closed ideal $I \leqslant_c \mathbb{A}^\ast$;
  so let $p$ be any path in it and let $u = t(p)$. Consider the set
  \begin{equation*}
    J = \{q \in \mathbb{A}^\ast(v,w) \mid \mathbb{A}^\ast(u,w) \text{ is non-empty}\} \subseteq \mathrm{mor}(\mathbb{A}^\ast)\rlap{ .}
  \end{equation*}
  This is clearly an ideal, and because $u$ is cofinal it is dense in
  $\mathbb{A}^\ast$. Letting $\{q_i\} \subseteq J$ be the basis of
  minimal paths, we can now define an $\mathbb{A}^\ast$-set map
  $f \colon J \rightarrow \mathbb{A}^\ast$ by taking $f(q_i) = r_i
  \cdot p$, where $r_i$ is any path in $\mathbb{A}^\ast(u,t(q_i))$. If
  we let $m = (J,f) \in M$, then $m^\ast(b) = f^{-1}(I)$ contains
  $f^{-1}(\mathbb{A}^\ast p)$, which is clearly all of the dense ideal
  $J \leqslant \mathbb{A}^\ast$; whence $m^\ast b = 1$ as desired.
\end{proof}


\begin{thebibliography}{10}

\bibitem{Aguiar1997Internal}
{\sc Aguiar, M.}
\newblock {\em Internal categories and quantum groups}.
\newblock PhD thesis, Cornell University, 1997.

\bibitem{Barr1972Point}
{\sc Barr, M.}
\newblock The point of the empty set.
\newblock {\em Cahiers Topologie G\'{e}om. Diff\'{e}rentielle 13\/} (1972),
  357--368.

\bibitem{Beck1969Distributive}
{\sc Beck, J.}
\newblock Distributive laws.
\newblock In {\em Seminar on Triples and Categorical Homology Theory
  (Z{\"u}rich, 1966/67)}, vol.~80 of {\em Lecture Notes in Mathematics}.
  Springer, 1969, pp.~119--140.

\bibitem{Bergman1991Actions}
{\sc Bergman, G.~M.}
\newblock Actions of {B}oolean rings on sets.
\newblock {\em Algebra Universalis 28\/} (1991), 153--187.

\bibitem{Birget2009Monoid}
{\sc Birget, J.-C.}
\newblock Monoid generalizations of the {R}ichard {T}hompson groups.
\newblock {\em J. Pure Appl. Algebra 213}, 2 (2009), 264--278.

\bibitem{Brownlowe2017C}
{\sc Brownlowe, N., Mundey, A., Pask, D., Spielberg, J., and Thomas, A.}
\newblock {$C^*$}-algebras associated to graphs of groups.
\newblock {\em Adv. Math. 316\/} (2017), 114--186.

\bibitem{Buneci2005Morphisms}
{\sc Buneci, M., and Stachura, P.}
\newblock Morphisms of locally compact groupoids endowed with {H}aar systems.
\newblock Preprint, available as
  \href{https://arxiv.org/abs/math/0511613}{arXiv:math/0511613}, 2005.

\bibitem{Cockett2011Differential}
{\sc Cockett, J. R.~B., Cruttwell, G. S.~H., and Gallagher, J.~D.}
\newblock Differential restriction categories.
\newblock {\em Theory and Applications of Categories 25\/} (2011), 537--613.

\bibitem{Cockett2002Restriction}
{\sc Cockett, J. R.~B., and Lack, S.}
\newblock Restriction categories {I}: categories of partial maps.
\newblock {\em Theoretical Computer Science 270\/} (2002), 223--259.

\bibitem{Cockett2021Generalising}
{\sc Cockett, R., and Garner, R.}
\newblock Generalising the \'{e}tale groupoid--complete pseudogroup
  correspondence.
\newblock {\em Adv. Math. 392\/} (2021), Paper No. 108030, 79.

\bibitem{Cockett2009Boolean}
{\sc Cockett, R., and Manes, E.}
\newblock Boolean and classical restriction categories.
\newblock {\em Mathematical Structures in Computer Science 19\/} (2009),
  357--416.

\bibitem{Castro2022Etale}
{\sc de~Castro, G.~G., and Machado, N.}
\newblock {\'E}tale categories, restriction semigroups, and their operator
  algebras.
\newblock Preprint, available as
  \href{https://arxiv.org/abs/2211.07618}{arXiv:2211.07618}, 2022.

\bibitem{Ehresmann1954Structures}
{\sc Ehresmann, C.}
\newblock Structures locales.
\newblock {\em Annali di Matematica Pura ed Applicata. Serie Quarta 36\/}
  (1954), 133--142.

\bibitem{Fountain1999Munn}
{\sc Fountain, J., Gomes, G. M.~S., and Gould, V.}
\newblock A {M}unn type representation for a class of {$E$}-semiadequate
  semigroups.
\newblock {\em J. Algebra 218\/} (1999), 693--714.

\bibitem{Gabriel1967Calculus}
{\sc Gabriel, P., and Zisman, M.}
\newblock {\em Calculus of fractions and homotopy theory}, vol.~35 of {\em
  Ergebnisse der Mathematik und ihrer Grenzgebiete}.
\newblock Springer, 1967.

\bibitem{Garner2023CartesianI}
{\sc Garner, R.}
\newblock Cartesian closed varieties i: the classification theorem.
\newblock Preprint, available as
  \href{https://arxiv.org/abs/2302.04402}{arXiv:2302.04402}, 2023.

\bibitem{Henry2016Convolution}
{\sc Henry, S.}
\newblock The convolution algebra of an absolutely locally compact topos.
\newblock Preprint, available as
  \href{https://arxiv.org/abs/1701.00113}{arXiv:1701.00113}, 2016.

\bibitem{Higgins1993Duality}
{\sc Higgins, P.~J., and Mackenzie, K. C.~H.}
\newblock Duality for base-changing morphisms of vector bundles, modules, {L}ie
  algebroids and {P}oisson structures.
\newblock {\em Mathematical Proceedings of the Cambridge Philosophical Society
  114\/} (1993), 471--488.

\bibitem{Higman1974Finitely}
{\sc Higman, G.}
\newblock {\em Finitely presented infinite simple groups}, vol.~8 of {\em Notes
  on Pure Mathematics}.
\newblock Australian National University, Department of Pure Mathematics, 1974.

\bibitem{Hollings2009From}
{\sc Hollings, C.}
\newblock From right {PP} monoids to restriction semigroups: a survey.
\newblock {\em Eur. J. Pure Appl. Math. 2\/} (2009), 21--57.

\bibitem{Jackson2001An-invitation}
{\sc Jackson, M., and Stokes, T.}
\newblock An invitation to {$C$}-semigroups.
\newblock {\em Semigroup Forum 62\/} (2001), 279--310.

\bibitem{Jackson2009Semigroups}
{\sc Jackson, M., and Stokes, T.}
\newblock Semigroups with if-then-else and halting programs.
\newblock {\em International Journal of Algebra and Computation 19\/} (2009),
  937--961.

\bibitem{Jackson2011Modal}
{\sc Jackson, M., and Stokes, T.}
\newblock Modal restriction semigroups: towards an algebra of functions.
\newblock {\em Internat. J. Algebra Comput. 21}, 7 (2011), 1053--1095.

\bibitem{Johnstone1985When}
{\sc Johnstone, P.~T.}
\newblock When is a variety a topos?
\newblock {\em Algebra Universalis 21\/} (1985), 198--212.

\bibitem{Johnstone1990Collapsed}
{\sc Johnstone, P.~T.}
\newblock Collapsed toposes and {C}artesian closed varieties.
\newblock {\em J. Algebra 129\/} (1990), 446--480.

\bibitem{Johnstone2002Sketches}
{\sc Johnstone, P.~T.}
\newblock {\em Sketches of an elephant: a topos theory compendium. {V}ol. 1},
  vol.~43 of {\em Oxford Logic Guides}.
\newblock Oxford University Press, 2002.

\bibitem{Johnstone2002Sketches2}
{\sc Johnstone, P.~T.}
\newblock {\em Sketches of an elephant: a topos theory compendium. {V}ol. 2},
  vol.~44 of {\em Oxford Logic Guides}.
\newblock Oxford University Press, 2002.

\bibitem{Jonsson1961On-two-properties}
{\sc J\'{o}nsson, B., and Tarski, A.}
\newblock On two properties of free algebras.
\newblock {\em Mathematica Scandinavica 9\/} (1961), 95--101.

\bibitem{Kumjian2000Higher}
{\sc Kumjian, A., and Pask, D.}
\newblock Higher rank graph {$C^\ast$}-algebras.
\newblock {\em New York J. Math. 6\/} (2000), 1--20.

\bibitem{Kumjian1997Graphs}
{\sc Kumjian, A., Pask, D., Raeburn, I., and Renault, J.}
\newblock Graphs, groupoids, and {C}untz-{K}rieger algebras.
\newblock {\em Journal of Functional Analysis 144\/} (1997), 505--541.

\bibitem{Laca2018Equilibrium}
{\sc Laca, M., Raeburn, I., Ramagge, J., and Whittaker, M.~F.}
\newblock Equilibrium states on operator algebras associated to self-similar
  actions of groupoids on graphs.
\newblock {\em Adv. Math. 331\/} (2018), 268--325.

\bibitem{Lawson2008Correspondence}
{\sc Lawson, M.~V.}
\newblock A correspondence between a class of monoids and self-similar group
  actions. {I}.
\newblock {\em Semigroup Forum 76\/} (2008), 489--517.

\bibitem{Lawson2012Non-commutative}
{\sc Lawson, M.~V.}
\newblock Non-commutative {S}tone duality: inverse semigroups, topological
  groupoids and {$C^\ast$}-algebras.
\newblock {\em International Journal of Algebra and Computation 22\/} (2012),
  1250058, 47.

\bibitem{Lawson2021Polycyclic}
{\sc Lawson, M.~V.}
\newblock The polycyclic inverse monoids and the {T}hompson groups revisited.
\newblock In {\em Semigroups, categories, and partial algebras}, vol.~345 of
  {\em Springer Proc. Math. Stat.} Springer, 2021, pp.~179--214.

\bibitem{Leinster2007Jonsson}
{\sc Leinster, T.}
\newblock J\'onsson--tarski toposes.
\newblock Talk at 85th Periapetetic Seminar on Sheaves and Logic, Nice, March
  2007.

\bibitem{Majid1995Foundations}
{\sc Majid, S.}
\newblock {\em Foundations of quantum group theory}.
\newblock Cambridge University Press, 1995.

\bibitem{Nekrashevych2005Self-similar}
{\sc Nekrashevych, V.}
\newblock {\em Self-similar groups}, vol.~117 of {\em Mathematical Surveys and
  Monographs}.
\newblock American Mathematical Society, 2005.

\bibitem{Nekrashevych2009Cstar}
{\sc Nekrashevych, V.}
\newblock C*-algebras and self-similar groups.
\newblock {\em Journal f{\"u}r die reine undangewandte Mathematik 630\/}
  (2009), 59--123.

\bibitem{Renault1980A-groupoid}
{\sc Renault, J.}
\newblock {\em A groupoid approach to {$C^{\ast} $}-algebras}, vol.~793 of {\em
  Lecture Notes in Mathematics}.
\newblock Springer, Berlin, 1980.

\bibitem{Resende2006Lectures}
{\sc Resende, P.}
\newblock Lectures on {\'e}tale groupoids, inverse semigroups and quantales.
\newblock Lecture Notes for the GAMAP IP Meeting, Antwerp, 4--18 September,
  2006. Available at \url{https://www.math.tecnico.ulisboa.pt/~pmr}.

\bibitem{Rosenthal1981Etendues}
{\sc Rosenthal, K.~I.}
\newblock \'{E}tendues and categories with monic maps.
\newblock {\em Journal of Pure and Applied Algebra 22\/} (1981), 193--212.

\bibitem{Scott1984Construction}
{\sc Scott, E.~A.}
\newblock A construction which can be used to produce finitely presented
  infinite simple groups.
\newblock {\em J. Algebra 90\/} (1984), 294--322.

\bibitem{Steinberg2010Groupoid}
{\sc Steinberg, B.}
\newblock A groupoid approach to discrete inverse semigroup algebras.
\newblock {\em Adv. Math. 223\/} (2010), 689--727.

\bibitem{Stokes2020Restriction}
{\sc Stokes, T.}
\newblock Restriction monoids and semilattices with left action.
\newblock Unpublished note, private communication, 2020.

\bibitem{Wyler1991Lecture}
{\sc Wyler, O.}
\newblock {\em Lecture notes on topoi and quasitopoi}.
\newblock World Scientific, 1991.

\end{thebibliography}
\end{document}